\theoremstyle{definition}
\newtheorem* {theorem*}{Theorem}
\newtheorem* {conjecture*}{Conjecture}
\newtheorem{theorem}{Theorem}[section]
\theoremstyle{definition}
\newtheorem* {example*}{Example}
\newtheorem{lemma}[theorem]{Lemma}
\theoremstyle{definition}
\newtheorem{definition}[theorem]{Definition}
\theoremstyle{definition}
\newtheorem{proposition}[theorem]{Proposition}
\newtheorem{corollary}[theorem]{Corollary}
\newtheorem*{remark*}{Remark}
\theoremstyle{definition}
\newtheorem{remark}[theorem]{Remark}
\theoremstyle{definition}
\newtheorem {example}[theorem]{Example}
\theoremstyle{definition}
\theoremstyle{definition}
\theoremstyle{definition}
\theoremstyle{definition}
\newcommand{\ytabc}[2]{
\ytableausetup{boxsize = #1,aligntableaux=center}
{\small\begin{ytableau}  #2  \end{ytableau}}
}
\def\({\left(}
\def\){\right)}
\newcommand{\cP}{\mathcal{P}}
\def\RR{\mathbb{R}}
\def\ZZ{\mathbb{Z}}
\def\spanning{\textnormal{-span}}
\newcommand{\one}{{1\hspace{-.11cm} 1}}
\def\barr{\begin{array}}
\def\earr{\end{array}}
\def\ba{\begin{aligned}}
\def\ea{\end{aligned}}
\def\be{\begin{equation}}
\def\ee{\end{equation}}
\def\qquand{\qquad\text{and}\qquad}
\def\quand{\quad\text{and}\quad}
\def\quord{\quad\text{or}\quad}
\def\ds{\displaystyle}
\def\PP{\mathbb{P}}
\def\ben{\begin{enumerate}}
\def\een{\end{enumerate}}
\def\bei{\begin{itemize}}
\def\eei{\end{itemize}}
\newcommand{\xRightarrow}[2][]{\ext@arrow 0359\Rightarrowfill@{#1}{#2}}
\definecolor{darkred}{rgb}{0.7,0,0} % darkred color
\newcommand{\defn}[1]{{\color{darkred}\emph{#1}}} % emphasis of a definition
 \def\Sym{\Lambda}
 \def\PP{\ZZ_{>0}}
\def\D{\mathsf{D}}
\def\SD{\mathsf{SD}}
\numberwithin{equation}{section}
\def\mSym{\widehat\Lambda}
\def\doublebar{/\hspace{-1mm}/}
\newcommand{\inn}{\mathrm{Int}}
\newcommand{\fkR}{\mathfrak{r}}
\newcommand{\bTheta}{\Theta^{(\beta)}}
\newcommand{\bOmega}{\Omega^{(\beta)}}
\newcommand{\bPhi}{\Phi^{(\beta)}}
\newcommand{\bPsi}{\Psi^{(\beta)}}
\newcommand{\bfx}{\mathbf{x}}
\newcommand{\bfy}{\mathbf{y}}
\def\YY{\mathbb{Y}}
\def\SY{\mathbb{S}\mathbb{Y}}
\renewcommand{\ss}{{\slash\hspace{-1mm}\slash}}
\def\zero{\mathbf{0}}
\def\one{\mathbf{1}}
\def\Sym{\mathsf{Sym}}
\def\mSym{\mathfrak{m}\Sym}
\def\SymSh{\mathsf{SSym}}
\def\bGamma{\Gamma^{(\beta)}}
\def\pGamma{\Gamma}
\def\mGamma{\overline{\Gamma}}
\def\bG{G^{(\beta)}}
\def\pG{G}
\def\mG{\overline{G}}
\def\bGammaP{\Gamma_P^{(\beta)}}
\def\bGammaQ{\Gamma_Q^{(\beta)}}
\def\pGammaP{\Gamma_P}
\def\pGammaQ{\Gamma_Q}
\def\mGammaP{\overline{\Gamma}_P}
\def\mGammaQ{\overline{\Gamma}_Q}
\def\bGP{GP^{(\beta)}}
\def\pGP{GP}
\def\mGP{\overline{GP}}
\def\bGQ{GQ^{(\beta)}}
\def\pGQ{GQ}
\def\mGQ{\overline{GQ}}
\newcommand{\htimes}{\mathbin{\hat\otimes}}
\def\NN{\ZZ_{\geq0}}
\def\corners{\mathsf{Inn}}
\def\free{\mathsf{Free}}
\def\gaps{\mathsf{Gap}}
\def\exts{\mathsf{Adj}}
\def\diff{\mathsf{Diff}}
\def\SVT{\mathsf{SVT}}
\def\ShSVT{\mathsf{ShSVT}}
\newcommand{\wphi}{\widehat\phi}
\newcommand{\wpi}{\widehat\pi}
\newcommand{\wvarepsilon}{\widehat\varepsilon}
\newcommand{\vphi}{\widehat\phi}
\newcommand{\vpi}{\widehat\pi}
\newcommand{\vvarepsilon}{\widehat\varepsilon}
\def\wone{\widehat{1}}
\begin{document}
\title{Positive specializations of $K$-theoretic Schur $P$- and $Q$-functions}
\author{
Eric Marberg \\
    Department of Mathematics \\
    Hong Kong University of Science and Technology \\
    {\tt eric.marberg@gmail.com}
}

\date{}

\maketitle

\begin{abstract}
Yeliussizov has classified the  positive specializations of symmetric Grothendieck functions, defined in several different ways, 
providing a $K$-theoretic lift of the classical Edrei--Thoma theorem.
This note studies the analogous classification problem for 
Ikeda and Naruse's $K$-theoretic Schur $P$- and $Q$-functions, which are the shifted versions of symmetric Grothendieck functions.
Our results extend a shifted variant of the Edrei--Thoma theorem due to Nazarov.
We also discuss an application to the problem of determining the 
extreme harmonic functions on a filtered version of the shifted Young lattice.
\end{abstract}

%\tableofcontents

\section{Introduction}

This note extends Yeliussizov's classification of
the positive specializations of symmetric Grothendieck functions from \cite{Yel20}
to their shifted analogues. 
We also discuss an application to the problem of determining the extreme harmonic functions on a filtered variant of the shifted Young lattice.
The rest of this introduction provides an outline of our main results.

\subsection{Positive specializations}

Let $A$ be an (associative and unital) algebra defined over the field of real numbers $\RR$.
A \defn{specialization} of $A$ 
 is an algebra morphism $\varphi : A\to \RR$.
A specialization of $A$ is \defn{positive} relative to a subset $S\subseteq A$
if $\varphi(s) \geq 0$ for all $s \in S$. In this case we also say that $\varphi$ is \defn{$S$-positive}.

Write 
$\RR\llbracket\bfx\rrbracket = \RR\llbracket x_1,x_2,x_3,\dots\rrbracket$
for the ring of formal power series with real coefficients in a countable sequence of commuting variables.
Recall that a \defn{partition} is a weakly decreasing sequence of integers
$\lambda=(\lambda_1\geq \lambda_2\geq \dots \geq 0)$
with finite sum $|\lambda|$, and that $\mu\subseteq\lambda$ means that $\mu_i \leq \lambda_i$ for all $i$.

Fix a real number $\beta \in \RR$.
The \defn{symmetric Grothendieck functions} $\bG_\lambda$ (indexed by arbitrary partitions $\lambda$  \cite{FK1994,LS1983})
and their skew analogues $\bG_{\lambda\ss\mu}$ (indexed by pairs of partitions $\mu\subseteq \lambda$ \cite{Yel19})
are certain elements of $\RR\llbracket\bfx\rrbracket$ that are symmetric under all permutations
of the $x$-variables. For the precise definition, see Section~\ref{set-tab-sect}.

When $\beta=-1$, these power series are significant in $K$-theory as representatives of the classes of the structure sheaves of Schubert varieties 
in the complex Grassmannian \cite[\S8]{Buch2002}.
When $\beta=0$, we recover the usual (skew) Schur functions
$ s_\lambda = G^{(0)}_\lambda $ and $ s_{\lambda/\mu} = G^{(0)}_{\lambda\ss\mu}.$
%Symmetric Grothendieck functions were first studied in \cite{FK1994,LS1983}, and also often called \defn{stable Grothendieck polynomials} in the literature. 

Let $\bGamma$ be the $\RR$-linear span of all skew symmetric Grothendieck functions
$\bG_{\lambda\ss\mu}$. The power series $\bG_\lambda=\bG_{\lambda\ss\emptyset}$ provide an $\RR$-basis for 
this vector space, which is actually a subalgebra of  $\RR\llbracket\bfx\rrbracket$ 
with unit $\bG_0 = 1$ and generators $\bG_n = \bG_{(n)}$ for $n \in \PP$
\cite{Buch2002}.
We say that a specialization of $\bGamma$
is  \defn{$\bG$-positive} if it is positive relative to the set of all  functions $\bG_{\lambda\ss\mu}$.

Yeliussizov \cite{Yel19} has classified this set of specializations
in the following way.
Let 
\be
\pG_{\lambda\ss\mu} = G^{(1)}_{\lambda\ss\mu} \quand \mG_{\lambda\ss\mu} = G^{(-1)}_{\lambda\ss\mu}
\ee
and 
define $\pG_{\lambda}$ and $\mG_{\lambda}$ analogously. Then write $\pGamma =\Gamma^{(1)}$ and $\mGamma=\Gamma^{(-1)}$
for the $\RR$-bialgebras generated by these power series.

As will be clarified in Section~\ref{unbounded-sect}, to classify the 
$\bG$-positive specializations of $\bGamma$
it suffices to identify 
the \defn{$\pG$-positive specializations} of $\pGamma$,
the \defn{$\mG$-positive specializations} of $\mGamma$,
and the \defn{Schur positive specializations} of the algebra of bounded degree symmetric functions $\Sym = \Gamma^{(0)}$. 
 The solution to the third classification problem is well-known (see, e.g., \cite[\S1.2]{Yel20}) and will be reviewed in Section~\ref{sym-sect}.

 In the following theorem,
let $a= (a_1\geq a_2 \geq \dots \geq 0)$ and 
$b = (b_1\geq b_2 \geq \dots \geq 0)$ be sequences of nonnegative real numbers and 
let $\gamma \in \RR_{\geq 0} $. Then define 
\be\label{C-as-eq} \textstyle C=  C(a,b,\gamma) = e^\gamma \prod_{n=1}^\infty \frac{1+a_n}{1-b_n}.\ee
 When this infinite product converges, we necessarily have $\sum_{n=1}^\infty (a_n+b_n)<\infty$.

\begin{theorem}[\cite{Yel20}]
\label{y-thm1}
An algebra morphism $\rho : \pGamma \to \RR$   is a $\pG$-positive specialization of $\pGamma$
if and only if 
 for some choice of 
 $a$, $b$, and $\gamma$
satisfying  
$\max(b) < 1 \leq C  =1+\rho(\pG_1)<\infty
 $ one has  
 \be\label{y-thm1-eq} \sum_{n\geq 0} \rho(\pG_n+\pG_{n+1}) z^{n} = 
C e^{\gamma z} \prod_{n=1}^\infty \frac{1+b_n z}{1-a_n z} .\ee

\end{theorem}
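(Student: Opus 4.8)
The plan is to reduce the statement to the classical Edrei--Thoma theorem for Schur-positive specializations of $\Sym$ recalled in Section~\ref{sym-sect}. Since $\pGamma$ is freely generated as a commutative algebra by the single-row functions $\pG_{(n)} = \pG_n$ ($n \in \PP$), a specialization $\rho$ is determined by the free data $(\rho(\pG_n))_{n\ge1}$, and the content of the theorem is to carve out which of these sequences are $\pG$-positive. The combination appearing on the left of \eqref{y-thm1-eq} dictates the right change of variables: set $c_n := \rho(\pG_n + \pG_{n+1})$, so that $c_0 = \rho(\pG_0 + \pG_1) = 1 + \rho(\pG_1)$ recovers the normalizing constant $C$ of \eqref{C-as-eq} directly from the constant term. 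I would then show that $\pG$-positivity of $\rho$ is equivalent to the existence of a Schur-positive specialization $\sigma$ of $\Sym$ with $c_n = C\,\sigma(h_n)$ for all $n$, after which \eqref{y-thm1-eq} becomes exactly the classical factorization of $\sum_n \sigma(h_n)z^n = e^{\gamma z}\prod_n \tfrac{1+b_nz}{1-a_nz}$ scaled by $C$.

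The main structural step is a generating-function identity for the single-row functions. The key input is the single-box evaluation: at $\beta = 1$ one has $\pG_{(1)} = \sum_{k\ge1} e_k$, the sum of all elementary symmetric functions, and this is what forces the compatibility $C = 1 + \rho(\pG_1) = 1 + \sum_{k\ge1}\sigma(e_k) = E_\sigma(1) = e^\gamma\prod_n\tfrac{1+a_n}{1-b_n}$. In particular the finiteness of $C = E_\sigma(1)$ is precisely the constraint $\max(b) < 1$, and the $a \leftrightarrow b$ swap between \eqref{C-as-eq} and \eqref{y-thm1-eq} is nothing but the $h \leftrightarrow e$ duality $H_\sigma(z)E_\sigma(-z) = 1$ for the companion specialization $\sigma$. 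For the building-block case of a principal evaluation at nonnegative reals $x_i = a_i$, a direct set-valued-tableau computation (Section~\ref{set-tab-sect}) should give $\sum_{n\ge0}\rho(\pG_{(n)} + \pG_{(n+1)})z^n = \prod_i(1+a_i)\cdot\prod_i\tfrac{1}{1-a_iz}$, which is exactly \eqref{y-thm1-eq} with $\gamma = 0$ and no $b$-parameters; the column (dual) evaluations then supply the numerator factors $1+b_nz$ together with the $\prod_n(1-b_n)^{-1}$ contribution to $C$, and a Plancherel-type limit supplies the $e^{\gamma z}$ and $e^\gamma$ factors. Assembling these pieces through the bialgebra structure yields the sufficiency direction.

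For necessity I would pass through total positivity, which is also the engine behind the classical case in Section~\ref{sym-sect}: a $K$-theoretic Jacobi--Trudi expansion should express each $\rho(\pG_{\lambda\ss\mu})$ as a minor of the one-sided Toeplitz matrix built from $(c_n)$, so that $\pG$-positivity forces all such minors to be nonnegative, i.e. $(c_n/C)$ is a P\'olya frequency sequence; the Aissen--Schoenberg--Whitney--Edrei theorem then delivers the factorization together with parameters $\gamma \ge 0$ and $a_n, b_n \ge 0$ satisfying $\sum_n(a_n + b_n) < \infty$. I expect the hardest part to be precisely this determinantal/total-positivity equivalence: because the Grothendieck functions are inhomogeneous, the naive Jacobi--Trudi determinant of single-row functions must be replaced by its $K$-theoretic analogue, and one must verify that the shift producing the combination $\pG_n + \pG_{n+1}$ is exactly what converts skew-function positivity into total positivity of the Toeplitz matrix of the $c_n$. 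Once that equivalence and the single-box identity are in place, both implications follow by translating the classical theorem for the companion specialization $\sigma$.
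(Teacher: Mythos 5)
Your overall strategy --- define a companion specialization $\sigma$ of $\Sym$ by $C\sigma(h_n) = c_n := \rho(\pG_n+\pG_{n+1})$ and reduce to the Edrei--Thoma / Aissen--Schoenberg--Whitney--Edrei (ASWE) theorem --- is essentially Yeliussizov's original argument in \cite{Yel20}, and the pivot identity it requires, $\pG_n + \pG_{n+1} = (1+\pG_1)h_n$, is exactly \eqref{GG-eq}. Note that the paper itself never reproves this theorem: it quotes the union-form classification (Theorem~\ref{y-thm3}) from \cite{Yel20} and, in the remark following it, converts that statement into \eqref{y-thm1-eq} using \eqref{GG-eq} together with \eqref{12cup-eq0}, \eqref{12cup-eq00}, and \eqref{12cup-eq}. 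Your proposal therefore attempts strictly more than the paper does, but as written it has two genuine gaps, both located exactly where the real work lies.

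First, your total-positivity step has the determinantal translation backwards. It is false that each $\rho(\pG_{\lambda\ss\mu})$ is a minor of the Toeplitz matrix of $(c_n)$: already $\rho(\pG_1) = c_0 - 1$ is not expressible as such a minor in general (the $1\times 1$ minors are the $c_n$ themselves). What \eqref{GG-eq} actually gives, since $\rho$ is an algebra morphism, is the opposite assignment
\[
\det\bigl(c_{\lambda_i-\mu_j-i+j}\bigr)_{i,j=1}^{k} \;=\; \rho\Bigl(\det\bigl(\pG_{\lambda_i-\mu_j-i+j}+\pG_{\lambda_i-\mu_j-i+j+1}\bigr)\Bigr) \;=\; \rho\bigl((1+\pG_1)^k\, s_{\lambda/\mu}\bigr),
\]
where the argument on the right is a polynomial in the $\pG_n$ and hence lies in $\pGamma$. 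To conclude from $\pG$-positivity that every such minor is nonnegative --- which is what the ASWE theorem needs --- you must prove that $(1+\pG_1)^k s_{\lambda/\mu}$ is an $\RR_{\geq 0}$-linear combination of the skew functions $\pG_{\nu\ss\kappa}$. That lemma is the genuine content of the hard direction (it is what \cite{Yel20} establishes); your proposal names it as ``the hardest part'' but offers no mechanism for proving it, so the necessity direction remains open. Second, there are unaddressed convergence issues: each $\pG_\lambda$ is an infinite sum of Schur functions, so specializations of $\Sym$ do not automatically extend to $\pGamma$. Well-definedness of $\wvarepsilon_b$ on $\pGamma$ is precisely where the hypothesis $\max(b)<1$ enters (\cite[Lem.~4.11]{Yel20}), and your computation $C = 1+\rho(\pG_1) = 1+\sum_{k}\sigma(e_k)$ presupposes that $\rho$ evaluates the infinite expansion $\pG_1 = \sum_{k\geq 1} e_k$ term by term through $\sigma$, i.e., that $\rho$ is already the continuous extension of $\sigma$ --- which, in the necessity direction, is part of what must be proven rather than assumed.
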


This result is equivalent to an explicit formula for 
any $\pG$-positive specialization of $\pGamma$ in terms of the parameters $a$, $b$, $\gamma$, and $C$;
see Section~\ref{unsigned-sect}.
Results in \cite{Yel20} give a similar (but slightly more subtle) classification of the $\mG$-positive specializations of $\mGamma$,
which we discuss in
Section~\ref{signed-sect}.

 \subsection{Shifted variants}

The purpose of this note  is to extend Theorem~\ref{y-thm1} and its signed analogue
to shifted versions of the bialgebra $\bGamma$.
Recall that a partition $\lambda = (\lambda_1>\lambda_2>\dots\geq 0)$ is \defn{strict} if its nonzero parts are all distinct.
Ikeda and Naruse \cite{IkedaNaruse} introduced the \defn{$K$-theoretic Schur $P$- and $Q$-functions} 
\[\bGP_\lambda \in \bGamma
\quand
\bGQ_\lambda\in \bGamma\]
 for all strict partitions $\lambda$.
These have skew analogues $\bGP_{\lambda\ss\mu} \in \bGamma
$ and  $\bGQ_{\lambda\ss\mu} \in \bGamma$ indexed by pairs of strict partitions $\mu\subseteq \lambda$ \cite{LM}.
The precise definitions are reviewed in Section~\ref{set-tab-sect}.

When $\beta=-1$, these power series are significant in $K$-theory as representatives of the classes of the structure sheaves of Schubert varieties 
in the orthogonal and Lagrangian Grassmannians \cite{IkedaNaruse}.
When $\beta=0$, we recover the classical Schur $P$ and $Q$-functions and their skew versions, namely:
\be
P_\lambda = GP^{(0)}_\lambda\text{ and } Q_\lambda = GQ^{(0)}_\lambda
\quad\text{along with}
\quad 
P_{\lambda/\mu} = GP^{(0)}_{\lambda\ss\mu}\text{ and }Q_{\lambda/\mu} = GQ^{(0)}_{\lambda\ss\mu}.\ee

Let $\bGammaP$ and $\bGammaQ$ be the respective $\RR$-vector spaces spanned by all 
$\bGP_{\lambda\ss\mu}$'s
and
$\bGQ_{\lambda\ss\mu}$'s.
 These vector spaces turn out to be sub-algebras with bases
\be
\bGammaQ = \RR\spanning\left\{\bGQ_\lambda: \lambda\text{ strict}\right\}\subsetneq \bGammaP= \RR\spanning\left\{\bGP_\lambda: \lambda\text{ strict}\right\} \subsetneq \bGamma\subsetneq
\RR\llbracket\bfx\rrbracket.
\ee
We say that specializations of $\bGammaP$ and $\bGammaQ$
are  \defn{$\bGP$-positive} 
and \defn{$\bGQ$-positive} 
if they are positive relative to the respective sets of  skew functions
$\bGP_{\lambda\ss\mu}$
and
$\bGQ_{\lambda\ss\mu}$.
To understand these specializations,
it suffices to treat the cases when $\beta \in \{-1,0,1\}$.
The situation when $\beta=0$ is known \cite{Nazarov} and will be reviewed in Section~\ref{nazar-sect}.
For the other cases, let 
\be
\pGP_{\lambda\ss\mu} = GP^{(1)}_{\lambda\ss\mu},
\qquad
 \pGQ_{\lambda\ss\mu} = GQ^{(1)}_{\lambda\ss\mu},
 \qquad
 \mGP_{\lambda\ss\mu} = GP^{(-1)}_{\lambda\ss\mu},
\qquad
 \mGQ_{\lambda\ss\mu} = GQ^{(-1)}_{\lambda\ss\mu},
\ee
and define $\pGP_{\lambda}$, $\mGP_{\lambda}$, $\pGQ_{\lambda}$, and $\mGQ_{\lambda}$ likewise.
Also let 
\be \pGammaP=\Gamma_P^{(1)} \quand \mGammaP=\Gamma_P^{(-1)}
\quad\text{along with}
\quad
\pGammaQ=\Gamma_Q^{(1)}\quand \mGammaQ=\Gamma_Q^{(-1)}.
\ee
We define
specializations of these algebras to be \defn{$\pGP$-}, \defn{$\mGP$-}, \defn{$\pGQ$-}, or \defn{$\mGQ$-positive}
in the obvious way.

We may now present our first new theorem.
Let $a= (a_1\geq a_2 \geq \dots \geq 0)$  be a sequence of nonnegative real numbers and 
suppose $\gamma \in \RR_{\geq0}$.
Define $\overline{x} = \frac{-x}{1+x}$ for any parameter $x$ and let
\be \textstyle D=  D(a,\gamma) = e^\gamma \prod_{n=1}^\infty \frac{1-\overline{a_n}}{1-a_n}.\ee
 When this infinite product converges, we necessarily have $\sum_{n=1}^\infty a_n<\infty$.
 
\begin{theorem} \label{main-thm}
The $\pGP$-positive specializations of $\pGamma_P$
and the $\pGQ$-positive specializations of $\pGamma_Q$
are each obtained by restricting some $\pG$-positive specialization of $\pGamma$.
More specifically, if 
  $\rho : \pGamma \to \RR$ is an algebra morphism
then the following properties are equivalent:
\ben
\item[(a)] $\rho$ restricts to a $\pGP$-positive specialization of $\pGamma_P$.

\item[(b)] $\rho$ restricts to a $\pGQ$-positive specialization of $\pGamma_Q$.
 
 \item[(c)] For some choice of $a$ and $\gamma$ satisfying $1 \leq D =  1+ \rho(\pGP_1)  < \infty$ one has
 \[
 \sum_{n\geq 0} \rho(\pGQ_n+\pGQ_{n+1}) z^{n} 
 = D^2 e^{2\gamma z} \prod_{n=1}^\infty \frac{1-\overline{a_n} z}{1-a_n z}.\]
\een
\end{theorem}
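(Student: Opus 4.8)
The plan is to prove Theorem~\ref{main-thm} by reducing the shifted ($P$ and $Q$) classification to the unshifted classification already available in Theorem~\ref{y-thm1}, exploiting the inclusions $\pGammaQ \subsetneq \pGammaP \subsetneq \pGamma$. The key structural input I would use is that $\pGP_\lambda$ and $\pGQ_\lambda$ expand \emph{positively} (with nonnegative integer, or at least nonnegative, coefficients) in the basis $\{\pG_{\nu\ss\tau}\}$ of $\pGamma$; this is the $K$-theoretic analogue of the classical fact that Schur $P$- and $Q$-functions are Schur-positive, and it should follow from the combinatorial tableau definitions in Section~\ref{set-tab-sect}. Granting this, one direction is immediate: if $\rho$ is a $\pG$-positive specialization of $\pGamma$, then its restriction to $\pGammaP$ (resp.\ $\pGammaQ$) sends every $\pGP_{\lambda\ss\mu}$ (resp.\ $\pGQ_{\lambda\ss\mu}$) to a nonnegative real number, so the restriction is $\pGP$-positive (resp.\ $\pGQ$-positive). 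This gives ``restriction of a $\pG$-positive specialization $\Rightarrow$ $\pGP$- and $\pGQ$-positive.''

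The substantive content is the reverse implication, together with the explicit generating-function formula in (c) and the equivalence of (a) and (b). For this I would argue that \emph{every} $\pGP$-positive (or $\pGQ$-positive) specialization of $\pGammaP$ (resp.\ $\pGammaQ$) actually extends to, and is the restriction of, a $\pG$-positive specialization of $\pGamma$. The natural strategy is to show that a specialization of the shifted algebra is determined by finitely-much data that also determines a unique candidate specialization of $\pGamma$, and then to verify that candidate is $\pG$-positive using Theorem~\ref{y-thm1}. Concretely, I expect there is a $\beta=1$ analogue of the classical identity expressing the Schur $Q$-generating series as a product $\prod \frac{1+a_n z}{1-a_n z}$ with a ``doubling'' of parameters; this is exactly what the formula in (c) encodes, with the substitution $a_n \mapsto (a_n, \overline{a_n})$ turning the single sequence $a$ into the pair $(a,b)$ feeding Theorem~\ref{y-thm1}. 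So the plan is: given a $\pGQ$-positive $\rho$, read off the series $\sum_n \rho(\pGQ_n + \pGQ_{n+1})z^n$, match it against Theorem~\ref{y-thm1} to recover parameters $a$ and $\gamma$ (with $b_n = \overline{a_n}$ forced and the exponential parameter doubled to $2\gamma$), and conclude the formula in (c). The compatibility $\max(\overline{a_n}) = \overline{a_1} < 1$ and $D^2 = C$ with $C = C(a,\overline a,2\gamma)$ should fall out of the definitions of $D$ and $\overline{x}$.

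For the equivalence of (a) and (b), I would use that $\pGammaQ \subseteq \pGammaP$ and that each $\pGP_\lambda$ expands positively in the $\pGQ$-basis (or vice versa, the generators $\pGQ_n$ and $\pGP_n$ are related by an explicit positive change of variables). Thus $\pGQ$-positivity of a restriction follows from $\pGP$-positivity and conversely; the passage between the two reduces to a single scaling relation between $\rho(\pGP_1)$ and $\rho(\pGQ_1)$ (the $Q$-functions being, roughly, the $P$-functions with an extra factor of $2$ in the relevant normalization), which is why the same parameters $a,\gamma$ serve both and why $D = 1+\rho(\pGP_1)$ appears squared in the $Q$-series.

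The main obstacle I anticipate is establishing the precise $\pG$-positivity of the shifted-to-unshifted expansion and, hand in hand with that, pinning down the exact parameter dictionary $b_n = \overline{a_n}$ and the factor-of-two relations. The na\"ive expectation from the $\beta=0$ case (Nazarov's shifted Edrei--Thoma theorem) is that ``strictness'' of the shifted setting forces the two parameter sequences in the unshifted theorem to be tied together by an involution; verifying that the correct involution is exactly $x \mapsto \overline{x} = \frac{-x}{1+x}$, and that it is compatible with the $\beta=1$ combinatorics rather than introducing a different $K$-theoretic correction, is where the real work lies. I would isolate this as a lemma computing $\sum_n \rho(\pGQ_n+\pGQ_{n+1})z^n$ directly from the restriction of a known $\pG$-positive $\rho$, so that matching with Theorem~\ref{y-thm1} becomes a bookkeeping step rather than an independent analytic argument.
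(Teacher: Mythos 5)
Your easy direction (restrictions of $\pG$-positive specializations are $\pGP$- and $\pGQ$-positive, via positive expansions into the $\pG$-basis) matches the paper's Proposition~\ref{ppp-prop}, and your proposed lemma computing $\sum_n \rho(\pGQ_n+\pGQ_{n+1})z^n$ for a known positive $\rho$ is essentially the paper's Lemma~\ref{HIMN-lem}. But the converse direction has a genuine gap at its central step. You propose, given a $\pGQ$-positive specialization $\rho$ of $\pGammaQ$, to ``read off the series $\sum_n \rho(\pGQ_n+\pGQ_{n+1})z^n$ and match it against Theorem~\ref{y-thm1}.'' Theorem~\ref{y-thm1} constrains the generating functions only of maps that are already known to be $\pG$-positive specializations of the \emph{full} algebra $\pGamma$; your $\rho$ is defined and positive only on $\pGammaQ$ (or $\pGammaP$), and an arbitrary nonnegative power series need not have the form $Ce^{\gamma z}\prod_n\frac{1+b_nz}{1-a_nz}$. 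So before any matching can happen you must manufacture from $\rho$ a $\pG$-positive specialization of $\pGamma$ whose value at $\pG_n$ is $\rho(\pGQ_n)$. This is exactly what the paper does via the bialgebra morphism $\Theta^{(1)}:\pGamma\to\pGammaQ$ sending $\pG_\lambda\mapsto \pGQ_{(\lambda+\delta)/\delta}$ (so $\pG_n\mapsto\pGQ_n$), and the $\pG$-positivity of $\rho\circ\Theta^{(1)}$ is not formal: it rests on Lemma~\ref{theta-lem}, i.e.\ the nontrivial recent result that each $\Theta^{(1)}(\pG_{\lambda\ss\mu})$ is an $\NN$-linear combination of $\pGQ_\nu$'s and also of $\pGP_\nu$'s. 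Moreover, even granting such an extension, the parameter dictionary $b_n=-\overline{a_n}$ (zeros tied to poles) is not bookkeeping: the paper derives it from the symmetry $\Theta^{(1)}\circ\Omega^{(1)}=\Theta^{(1)}$ (Lemma~\ref{theta-lem2}, proved by an explicit tableau bijection), which forces $\wphi_a\sqcup\wvarepsilon_b\sqcup\wpi_\gamma$ to be invariant under $\Omega^{(1)}$, and then uses $\wphi_a\circ\Omega^{(1)}=\wvarepsilon_{a'}$, $\wvarepsilon_b\circ\Omega^{(1)}=\wphi_{b'}$, $\wpi_\gamma\circ\Omega^{(1)}=\wpi_\gamma$ together with uniqueness in Theorem~\ref{y-thm3}. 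You correctly flag this as ``where the real work lies,'' but the proposal contains no mechanism for it, and this is the theorem's core rather than a verification.

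Separately, your route to the equivalence of (a) and (b) would fail. You want to use a positive change of basis between $\pGP$- and $\pGQ$-functions, but $\pGP_\lambda\notin\pGammaQ$ in general (the inclusion $\pGammaQ\subsetneq\pGammaP$ is proper), and in the other direction the paper explicitly notes, citing \cite{ChiuMarberg}, that \emph{negative} coefficients are required to express an arbitrary $\pGQ$-function in terms of $\pGP$-functions --- this is precisely why the paper calls the equivalence of (a) and (b) surprising. In the paper the equivalence is obtained indirectly: either hypothesis makes $\rho\circ\Theta^{(1)}$ a $\pG$-positive specialization of $\pGamma$ (because $\Theta^{(1)}$ lands positively in \emph{both} the $\pGP$- and $\pGQ$-bases), which yields the classification (c); and (c) implies both (a) and (b) by Proposition~\ref{ppp-prop} applied to the restriction of $\wphi_a\sqcup\wpi_\gamma$, as in the proof of Theorem~\ref{main-thm2}.
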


The equivalence of (a) and (b) is surprising, since negative coefficients are required to express an arbitrary $\pGQ$-function as a linear combination of $\pGP$-functions \cite{ChiuMarberg}.

As with Theorem~\ref{y-thm1}, this result is equivalent to an explicit formula for 
any $\pGP$- or $\pGQ$-positive specialization of $\pGammaP$ or $\pGammaQ$;
see Section~\ref{unsigned-sect}.
We also derive a similar classification of the $\mGP$ and $\mGQ$-positive specializations of $\mGammaP$ and $\mGammaQ$ in 
Section~\ref{signed-sect}.

\subsection{Applications to harmonic functions}

The $\beta=1$ classification results presented
in this introduction
 are more natural algebraically than their $\beta=-1$ versions, since if $\beta\geq 0$ then
 we have 
\be
\bG_{\lambda\ss\mu} \in \RR\spanning\left\{ \bG_\nu\right\},
\quad
\bGP_{\lambda\ss\mu} \in \RR\spanning\left\{ \bGP_\nu\right\},
\quad
\bGQ_{\lambda\ss\mu} \in \RR\spanning\left\{ \bGQ_\nu\right\}
\ee
by results in \cite{Buch2002,Mar2025}; see the discussion in Section~\ref{unbounded-sect}.
Hence, a specialization is $\pG$-, $\pGP$-, or $\pGQ$-positive if and only if it is positive relative to the defining 
basis of $\pGamma$,  $\pGammaP$, or  $\pGammaQ$.

Additionally, Yeliussizov has shown  \cite[Thm.~5.2]{Yel20} that the subset of $\pG$-positive specializations $\rho$ of $\pGamma$
that are \defn{normalized} in the sense that $\rho(\pG_{(1)})=1$
are naturally in bijection
with the extreme points of the convex set of \defn{harmonic functions} on a certain \defn{filtered Young graph}.
In Section~\ref{harmonic-sect}, we prove an extension of this result that relates 
normalized $\pGP$-positive specializations of $\bGammaP$
to
the extreme points of the convex set of harmonic functions on a shifted filtered Young graph.

%\eric{todo}
%The rest of this note is organized as follows.

\subsection*{Acknowledgments}

The author thanks Alimzhan Amanov, Joel Lewis, Damir Yeliussizov for helpful discussions.

This article is based on work supported by the National Science Foundation under grant DMS-1929284 while the author was in residence at the Institute for Computational and Experimental Research in Mathematics in Providence, RI, during the Categorification and Computation in Algebraic Combinatorics semester program.

The author was also partially supported by Hong Kong RGC grants 16304122 and 16304625.

\section{Preliminaries}

This section contains some general background material on specializations of bialgebras
and classical algebras of symmetric functions.

\subsection{Bialgebra specializations}

We assume some familiarity with the definition of a bialgebra and its basic properties.
A good reference for this material is \cite{GrinbergReiner}. 
Suppose  $B$ is a bialgebra defined over $\RR$ with coproduct $\Delta_B$.
Let $\otimes $ be the tensor product over $\RR$ and 
write $\nabla_\RR : \RR \otimes \RR\to \RR$ for the usual multiplication map.

\begin{definition}
The \defn{union} of two specializations $\rho_1,\rho_2 : B \to \RR$
is  the specialization
\be\rho_1\sqcup \rho_2 = \nabla_\RR\circ (\rho_1\otimes \rho_2)\circ \Delta_B.\ee
\end{definition}

As $\Delta_B$ is associative, it holds that $\rho_1 \sqcup(\rho_2\sqcup \rho_3) = (\rho_1\sqcup \rho_2)\sqcup \rho_3$
so we many consider iterated unions and omit all parentheses in expressions for these.
In general the union operation on specializations may not be commutative, 
but if $B$ is cocommutative then $\rho_1\sqcup\rho_2 = \rho_2\sqcup\rho_1$.

 %A subset $S\subseteq H$  is \defn{multiplicative} if $s_1s_2 \in S$ for all $s_1,s_2 \in S$.
\begin{definition}
A subset $S\subseteq B$ is \defn{comultiplicative} if
\[
\Delta_B(s) \in \RR_{\geq 0}\spanning\{ s_1\otimes s_2 : s_1,s_2\in S\}\quad\text{for all $s \in S$.}
\]
\end{definition}

If $S$ has this the property then
any union of $S$-positive specializations is also $S$-positive.
Hence, if $S$ is comultiplicative then
the set of $S$-positive specializations of $B$ form a semigroup.

\subsection{Symmetric functions}\label{sym-sect}

Let $\Sym=\Sym(\bfx)\subset \RR\llbracket\bfx\rrbracket $ denote the ring bounded degree \defn{symmetric functions} \cite[\S I]{Macdonald}.
We may express $\Sym$ as a polynomial ring in two ways as
$ \Sym = \RR[h_1,h_2,h_3,\dots] = \RR[e_1,e_2,e_3,\dots]$
where $h_n$ and $e_n$ are the \defn{complete homogeneous} and \defn{elementary} symmetric functions
\be
\textstyle
h_n = \sum_{1 \leq i_1 \leq i_2 \leq \dots \leq i_n} x_{i_1}x_{i_2}\cdots x_{i_n}
\quand
e_n = \sum_{1 \leq i_1 < i_2 < \dots < i_n} x_{i_1}x_{i_2}\cdots x_{i_n}.
\ee
The $\RR$-algebra $\Sym$ has a bialgebra structure \cite[\S2]{GrinbergReiner} in which the counit $\varepsilon : \Sym \to \RR$ is the   map setting $x_1=x_2=x_3=\dots=0$ and where the coproduct $\Delta : \Sym \otimes \Sym \to \Sym$ satisfies 
\be\label{coproduct-def-eq}
\textstyle\Delta(h_n) = \sum_{i=0}^n h_i \otimes h_{n-i} \quand  \Delta(e_n) = \sum_{i=0}^n e_i \otimes e_{n-i}
\quad\text{for all $n \in \NN$}.
\ee
In this formula, we set $e_0 = h_0=1$.
The coproduct for $\Sym$ may be computed by replacing the variables $x_1,x_2,\dots$ with a doubled sequence 
$x_1,x_2,\dots,y_1,y_2,\dots$ and then applying the natural isomorphism $\Sym(\bfx,\bfy) \xrightarrow{\sim} \Sym\otimes\Sym$.

 There is a unique bialgebra involution 
$\omega : \Sym \to\Sym$ with 
\be\label{omega-def}
\omega(h_n) =e_n\quand \omega(e_n) =h_n\quad\text{for all $n \in \NN$.}
\ee
The cocommutative bialgebra $\Sym$ is graded and connected, and therefore is a Hopf algebra.
Its antipode is the composition of $\omega$ with the evaluation map 
$x_i\mapsto -x_i$ negating all variables \cite[\S2]{GrinbergReiner}.

The Hopf algebra $\Sym$ has a distinguished $\RR$-basis of \defn{Schur functions}
$s_\lambda$ indexed by all partitions, which have a skew generalization $s_{\lambda/\mu}$ indexed by pairs of
partitions $\mu\subseteq\lambda$. 
A succinct combinatorial 
definition is provided by the formulas
$s_\lambda = s_{\lambda/\emptyset}$ and $s_{\lambda/\mu} =\sum_T x^T$,
where the sum is over all \defn{semistandard tableaux} of shape $\lambda/\mu$  \cite[\S I.5]{Macdonald}.

We refer to the specializations of $\Sym$ that are positive with respect to
the set of all skew Schur functions $s_{\lambda/\mu}$ 
as \defn{Schur positive}. 
This is the same as the set of specializations that are positive with respect to 
the basis of Schur functions, since 
each skew Schur function is a $\NN$-linear combination of ordinary Schur functions
\cite[\S I.5, (5.3)]{Macdonald}.

The set of skew Schur functions is comultiplicative  
since $\Delta(s_{\lambda/\mu}) = \sum_{\mu\subseteq\kappa\subseteq\lambda} s_{\kappa/\mu}\otimes s_{\lambda/\kappa}$
\cite[\S I.5, (5.10)]{Macdonald}
 and so the set of Schur positive 
specializations is a commutative semigroup under the union operation.
These specializations have a well-known classification, which we briefly review.

\begin{definition}
For an infinite sequence of nonnegative real numbers $a=(a_1\geq a_2\geq a_3 \geq \dots \geq 0)$ with finite sum,
let $\phi_a : \Sym \to\RR$ and $\varepsilon_a : \Sym \to\RR$ be the maps with the formulas
\[
\phi_a(f) = f(a_1,a_2,a_3,\dots)
\quand
\varepsilon_a = \phi_a \circ \omega.
\]
\end{definition}

When $a$ is a single real number, let $\phi_a =\phi_{(a,0,0,0,\dots)}$ and when $a=(a_1\geq a_2\geq \dots\geq a_k)$ is a finite sequence of real numbers,
let $\phi_a = \phi_{(a_1,a_2,\dots,a_k,0,0,0,\dots)}$. Extend the definition of $\varepsilon_a$ similarly.

\begin{definition}
For any real number $\gamma \geq 0$ write  $\pi_\gamma : \Sym \to\RR$ for the unique algebra morphism
satisfying 
$\pi_\gamma(h_n)= \tfrac{\gamma^n}{n!} $ for all $n \in \NN$.
\end{definition}

By \cite[Lem.~6.5]{Yel20} it holds for any $f \in \Sym$ that
\be\label{limit-eq}
\ds\pi_\gamma(f) = \lim_{N\to \infty} f(\underbrace{\gamma/N,\gamma/N,\dots,\gamma/N}_{N\text{ terms}},0,0,0,\dots).
\ee
Using this, one checks that we also have $ \pi_\gamma(e_n )= \tfrac{\gamma^n}{n!}$. Hence 
\be\label{pi-omega-eq}
\pi_\gamma \circ \omega = \pi_\gamma.
\ee
The following result is equivalent to the classical Edrei--Thoma theorem \cite{Edrei,Thoma}.

\begin{theorem}[{Edei--Thoma; see \cite[Thm.~2.4]{Yel20}}]
\label{schur-thm}
The Schur positive specializations of $\Sym$ are the maps  
$\rho = \phi_\alpha \sqcup \varepsilon_b \sqcup \pi_c$
where  $a=(a_1\geq a_2 \geq \dots \geq 0)$ and $b=(b_1\geq b_2\geq  \dots \geq 0)$
have finite sum and  $\gamma\in \RR_{\geq 0}$. The representation of $\rho$
is unique and it holds that $\textstyle \rho (h_1)= \sum_{n=1}^\infty (a_n+b_n)+\gamma_n.$
\end{theorem}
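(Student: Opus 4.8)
The plan is to translate Schur positivity into the classical language of \emph{totally positive sequences}, apply Edrei's theorem on such sequences, and then match the resulting product formula against the generating functions attached to the three building blocks $\phi_a$, $\varepsilon_b$, and $\pi_\gamma$. The decisive ingredient is Edrei's theorem itself, which is a genuinely analytic statement about P\'olya frequency sequences; I would cite it rather than reprove it, and expect it to be the one hard step. Everything else is combinatorial bookkeeping built on the Jacobi--Trudi identity and the coproduct formula \eqref{coproduct-def-eq}.

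I would first dispose of the easy direction, namely that $\rho=\phi_a\sqcup\varepsilon_b\sqcup\pi_\gamma$ is Schur positive for every admissible choice of parameters. Each block is Schur positive on its own: one has $\phi_a(s_\lambda)=s_\lambda(a_1,a_2,\dots)\geq 0$ by monomial positivity of Schur functions; using $\omega(s_\lambda)=s_{\lambda'}$ one gets $\varepsilon_b(s_\lambda)=s_{\lambda'}(b_1,b_2,\dots)\geq 0$; and $\pi_\gamma(s_\lambda)\geq 0$ follows from the limit formula \eqref{limit-eq}, since each approximating term $s_\lambda(\gamma/N,\dots,\gamma/N,0,\dots)$ is nonnegative. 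Because the set of skew Schur functions is comultiplicative, any union of Schur positive specializations is again Schur positive, which settles this direction.

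For the forward direction, Schur positivity of $\rho$ is equivalent to $\rho(s_\lambda)\geq 0$ for all partitions $\lambda$, since every skew Schur function is an $\NN$-linear combination of ordinary ones. By the Jacobi--Trudi identity $s_\lambda=\det\bigl(h_{\lambda_i-i+j}\bigr)_{1\leq i,j\leq\ell}$, so setting $c_n=\rho(h_n)$ (with $c_0=1$ and $c_n=0$ for $n<0$), the value $\rho(s_\lambda)$ is exactly the minor of the Toeplitz matrix $(c_{j-i})_{i,j\geq 0}$ selected by $\lambda$. More generally every minor of this matrix equals some $\rho(s_{\lambda\ss\mu})$ by the Lindstr\"om--Gessel--Viennot correspondence, and each such skew evaluation is a nonnegative combination of the $\rho(s_\nu)$. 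Hence $\rho$ is Schur positive if and only if the sequence $(c_n)_{n\geq 0}$ is totally positive, i.e.\ all minors of its Toeplitz matrix are nonnegative.

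At this point Edrei's theorem asserts that a sequence $(c_n)_{n\geq0}$ with $c_0=1$ is totally positive precisely when
\[
\sum_{n\geq 0} c_n z^n = e^{\gamma z}\prod_{n=1}^\infty\frac{1+b_n z}{1-a_n z}
\]
for some $\gamma\geq 0$ and nonnegative sequences $a,b$ of finite sum. To conclude, I would compute the three generating series $\sum_n\phi_a(h_n)z^n=\prod_n(1-a_nz)^{-1}$, then $\sum_n\varepsilon_b(h_n)z^n=\prod_n(1+b_nz)$ via $\varepsilon_b(h_n)=e_n(b_1,b_2,\dots)$, and finally $\sum_n\pi_\gamma(h_n)z^n=e^{\gamma z}$; since the coproduct $\Delta(h_n)=\sum_i h_i\otimes h_{n-i}$ forces the union operation to \emph{multiply} these series, the map $\phi_a\sqcup\varepsilon_b\sqcup\pi_\gamma$ reproduces exactly Edrei's product. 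This identifies $\rho$ with the claimed form, and reading off the coefficient of $z$ gives $\rho(h_1)=\gamma+\sum_{n}(a_n+b_n)$. Uniqueness then follows because the poles $z=1/a_n$ and zeros $z=-1/b_n$ of the product recover the multisets $\{a_n\}$ and $\{b_n\}$, after which $\gamma$ is pinned down by the residual exponential factor.
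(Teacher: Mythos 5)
The paper does not actually prove this statement: it is quoted as the classical Edrei--Thoma theorem, citing \cite{Edrei,Thoma} and \cite[Thm.~2.4]{Yel20}, so there is no internal proof to compare against. Your argument is the standard derivation from the literature and is correct. The easy direction (monomial positivity for $\phi_a$, the transpose rule $\omega(s_{\lambda/\mu})=s_{\lambda^\top/\mu^\top}$ for $\varepsilon_b$, the limit formula \eqref{limit-eq} for $\pi_\gamma$, and comultiplicativity of the skew Schur basis for unions) is exactly how the paper itself argues positivity in analogous situations. Your forward direction correctly identifies Schur positivity of $\rho$ with total positivity of the sequence $c_n=\rho(h_n)$: the minors of the Toeplitz matrix $(c_{j-i})$ are the skew evaluations $\rho(s_{\lambda/\mu})=\det\bigl(c_{\lambda_i-\mu_j-i+j}\bigr)$ by the \emph{skew} Jacobi--Trudi identity (LGV is a proof of that identity, not the statement itself), and conversely skew evaluations are $\NN$-combinations of straight ones. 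Outsourcing the hard analytic core to Edrei's theorem on totally positive sequences is the right call; that is precisely the content one cannot avoid citing. Your closing computations of $\phi_a(H(z))$, $\varepsilon_b(H(z))$, $\pi_\gamma(H(z))$ and the multiplicativity of $\sqcup$ on generating series reproduce \eqref{12cup-eq0} and \eqref{12cup-eq}, and since $h_1,h_2,\dots$ generate $\Sym$ freely, matching $\rho(H(z))$ against Edrei's product pins down $\rho=\phi_a\sqcup\varepsilon_b\sqcup\pi_\gamma$; the zeros-and-poles uniqueness argument mirrors the remark the paper makes after Theorem \ref{y-thm3}. No gaps.
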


Let $z$ be a formal variable. Then write
 $H(z) ,E(z) \in  \Sym\llbracket z\rrbracket$ for the power series
\be
H(z) := \sum_{n\geq 0} h_n z^n = \prod_{n\geq 1} \frac{1}{1-x_nz}
\quand
E(z) :=\sum_{n\geq 0} e_n z^n = \prod_{n\geq 1} (1+x_nz) = H(-z)^{-1}.
\ee
Any specialization $\rho : \Sym \to \RR$ extends to an algebra morphism $\Sym\llbracket z\rrbracket \to \RR\llbracket z \rrbracket$ 
with the formula
$\textstyle \rho\( \sum_{n \geq 0} f_n z^n \) =  \sum_{n \geq 0} \rho(f_n) z^n.
$
Under this convention, since $\omega(H(z)) = E(z)$, one has 
\be\label{12cup-eq0}
\textstyle
\phi_a(H(z)) = \prod_{n\geq 1}\frac{1}{1-a_nz},
\quad
\varepsilon_a(H(z)) = \prod_{n\geq 1} (1+a_nz),
\quand
\pi_\gamma(H(z)) = e^{\gamma z}
\ee
as well as
\be\label{12cup-eq00}
\textstyle
\phi_a(E(z)) =\prod_{n\geq 1} (1+a_nz) ,
\quad
\varepsilon_a(E(z)) = \prod_{n\geq 1}\frac{1}{1-a_nz},
\quand
\pi_\gamma(E(z)) = e^{\gamma z}.
\ee
The coproduct formula \eqref{coproduct-def-eq} implies for any algebra morphisms $\rho_i : \Sym \to \RR$ that 
\be\label{12cup-eq}
(\rho_1\sqcup \rho_2)(H(z)) = \rho_1(H(z))\rho_2(H(z))
\quand
(\rho_1\sqcup \rho_2)(E(z)) = \rho_1(E(z))\rho_2(E(z))
\ee
so we have $\rho =\phi_\alpha \sqcup \varepsilon_b \sqcup \pi_\gamma$ 
if and only if 
$
\rho(H(z)) = e^{\gamma z}\prod_{n\geq 1} \frac{1+b_n z}{1-a_nz}.
$
Moreover, we see that 
\be
\phi_a = \phi_{a_1}\sqcup \phi_{a_2}\sqcup \phi_{a_3}\sqcup\cdots 
\quand
\varepsilon_a = \varepsilon_{a_1}\sqcup \varepsilon_{a_2}\sqcup \varepsilon_{a_3}\sqcup\cdots .
\ee

\subsection{Shifted symmetric functions}\label{nazar-sect}

Let $\SymSh$ be the subspace of power series $f \in \Sym$ satisfying the \defn{supersymmetry} property
\[f(-z,z,x_1,x_2,x_3,\dots) \in \RR\llbracket x_1,x_2,x_3,\dots\rrbracket.\]
This subspace is a Hopf subalgebra of $\Sym$, and has a basis given by the \defn{Schur $P$-functions} $P_\lambda$ indexed by all strict partitions $\lambda$; see \cite[\S III.8]{Macdonald}.
%\eric{todo: refs}
Since 
 the scalar field is $\RR$, 
 the \defn{Schur $Q$-functions} $Q_\lambda=2^{\ell(\lambda)} P_\lambda$ form a second basis.
 The power series $P_\lambda$ and $Q_\lambda$ are obtained from the symmetric functions $\bGP_\lambda$ and $\bGQ_\lambda$
 defined in Section~\ref{set-tab-sect} by setting $\beta=0$. 

Any specialization of $\SymSh$ that is positive with respect to the basis $\{P_\lambda\}$
is clearly also positive with respect to the basis $\{Q_\lambda\}$.
The classification of such \defn{Schur $P$-positive} specializations is also known \cite{Nazarov}.
%and is easy to derive from Theorem~\ref{s-pos-thm}.
%
%\begin{remark}
%Each skew Schur $P$-function $P_{\lambda/\mu}$ is Schur $P$-positive  \cite{?}, so any Schur positive specialization $\rho:\SymSh\to\RR$
%has $\rho(P_{\lambda/\mu})\geq 0$ for all strict partitions $\mu\subseteq\lambda$.
%Since 
%\[ \textstyle\Delta(P_{\lambda/\mu}) = \sum_{\mu\subseteq\kappa\subseteq\lambda} P_{\kappa/\mu} \otimes P_{\lambda/\kappa},\]
% the set of Schur $P$-positive specializations is a commutative monoid under union by Lemma~\ref{union-lem}.
%\end{remark}
%
%
For $n \in \NN$ define 
$ q_n = \sum_{i+j=n} e_i h_j $ %= 2P_{(n)} =Q_{(n)}$ 
so that 
\[\textstyle q_0=1,\quad q_1 =2h_1,
\quand
Q(z) := \sum_{n \in \NN} q_nz^n = E(z)H(z) =\prod_{n\geq 1}\frac{1+x_n z}{1-x_n z}.
\] 
 The elements $\{q_n\}$ generate $\SymSh$ as an $\RR$-algebra and are algebraically independent \cite[\S III.8]{Macdonald}.

\begin{theorem}[Nazarov \cite{Nazarov}]
\label{symsh-thm}
A specialization $\rho : \SymSh \to \RR$ is Schur $P$-positive if and only if 
there are real numbers 
$a=(a_1 \geq a_2 \geq \dots\geq0) $
and $\gamma\geq 0$ with $\sum_{n =1}^\infty \alpha_n  < \infty$
such that
\[\sum_{n\geq 0} \rho(q_n) z^n = e^{2\gamma z} \prod_{n\geq 1}\frac{1+a_n z}{1-a_n z}.\]
%as elements of the formal power series ring $\RR\llbracket z\rrbracket$,
In this case 
$\textstyle
\rho(h_1) =\tfrac{1}{2}\rho(q_1)= \sum_{n =1}^\infty a_n + \gamma
$ and 
$\rho$ coincides with $  \phi_{a} \sqcup \pi_\gamma$  restricted to $\SymSh.$
\end{theorem}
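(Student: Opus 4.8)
The plan is to prove the two implications separately, using the Edrei--Thoma classification (Theorem~\ref{schur-thm}) as a black box together with the facts that $\SymSh$ is a Hopf subalgebra of $\Sym$ and that $Q(z)=\sum_n q_nz^n$ is group-like there (indeed $\Delta(Q(z))=Q(z)\otimes Q(z)$ since $H(z)$ and $E(z)$ are group-like). This last point means a specialization of $\SymSh$ is completely determined by the series $F(z):=\rho(Q(z))$, which is the object the theorem describes.

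I would first dispatch sufficiency. Fix $a=(a_1\ge a_2\ge\cdots\ge0)$ with finite sum and $\gamma\ge0$, and set $\rho=(\phi_a\sqcup\pi_\gamma)|_{\SymSh}$. Each one-variable evaluation $\phi_{a_i}|_{\SymSh}$ is Schur $P$-positive because $P_\lambda$ has nonnegative coefficients in its monomial expansion, so $\phi_{a_i}(P_\lambda)=P_\lambda(a_i,0,0,\dots)\ge0$; similarly $\pi_\gamma|_{\SymSh}$ is Schur $P$-positive via the limit formula \eqref{limit-eq}, which presents $\pi_\gamma(P_\lambda)$ as a limit of nonnegative numbers. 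Because the skew functions $P_{\lambda/\mu}$ are nonnegative combinations of straight $P_\nu$'s, the coproduct $\Delta(P_\lambda)=\sum_\mu P_\mu\otimes P_{\lambda/\mu}$ shows that $\{P_\lambda\}$ is comultiplicative, so the union $\phi_a\sqcup\pi_\gamma=\phi_{a_1}\sqcup\phi_{a_2}\sqcup\cdots\sqcup\pi_\gamma$ is again Schur $P$-positive. To extract the generating function I would work inside $\Sym$, where $Q(z)=E(z)H(z)$ gives $\rho(Q(z))=\rho(E(z))\rho(H(z))$; substituting the values from \eqref{12cup-eq0}, \eqref{12cup-eq00}, and \eqref{12cup-eq} yields $\rho(E(z))=e^{\gamma z}\prod_n(1+a_nz)$ and $\rho(H(z))=e^{\gamma z}\prod_n(1-a_nz)^{-1}$, whose product is the claimed $e^{2\gamma z}\prod_n\frac{1+a_nz}{1-a_nz}$. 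The identity $\rho(h_1)=\tfrac12\rho(q_1)=\sum_n a_n+\gamma$ then reads off from the degree-one coefficient.

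The real content is the converse. Applying $\rho$ to the relation $Q(z)Q(-z)=1$ (itself a consequence of $E(z)=H(-z)^{-1}$) gives $F(z)F(-z)=1$, so it suffices to force $F$ into the product form. The guiding observation is that an arbitrary Schur-positive specialization $\sigma=\phi_a\sqcup\varepsilon_b\sqcup\pi_\gamma$ of $\Sym$ restricts to $\sigma(Q(z))=e^{2\gamma z}\prod_n\frac{(1+a_nz)(1+b_nz)}{(1-a_nz)(1-b_nz)}$, which depends only on the merged sequence $a\cup b$ and on $\gamma$; hence every such restriction already has the desired shape, with the $b$-parameters absorbed into $a$. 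The converse is therefore equivalent to showing that every Schur $P$-positive specialization of $\SymSh$ is the restriction of some Schur-positive specialization of $\Sym$, after which Theorem~\ref{schur-thm} concludes.

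The main obstacle is exactly this extension, which cannot be produced formally, since the Hopf retraction $\Sym\to\SymSh$ annihilating the even power sums does not preserve Schur positivity. To establish it I would pass to the explicit constraints: $P$-positivity is equivalent to $Q$-positivity, and $\rho(Q_\lambda)=\operatorname{Pf}\big[\rho(Q_{(\lambda_i,\lambda_j)})\big]$ is a Pfaffian in the two-row values $\rho(Q_{(r,s)})=\rho(q_rq_s)+2\sum_{i\ge1}(-1)^i\rho(q_{r+i}q_{s-i})$, so the hypothesis says precisely that all these Pfaffians are nonnegative. Writing $\log F(z)=\sum_{k\ \mathrm{odd}}\tfrac{2}{k}\rho(p_k)z^k$, the target form is equivalent to the moment conditions $\rho(p_k)=\sum_n a_n^{\,k}$ for odd $k\ge3$ together with $\rho(p_1)=\gamma+\sum_n a_n$. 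I would deduce these from the Pfaffian positivity by invoking Edrei's theorem on totally positive sequences in its Pfaffian (spin) form; equivalently, by a Vershik--Kerov ergodic argument identifying the extreme Schur $P$-positive specializations as the single evaluations $\phi_\alpha$ and the limit map $\pi_\gamma$, and writing a general one as a union of these. This total-positivity input is the one genuinely analytic ingredient, with all the remaining steps being the bialgebra bookkeeping above.
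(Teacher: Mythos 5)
Your sufficiency direction is fine and essentially matches the paper's: positivity of $\phi_a \sqcup \pi_\gamma$ via unions of one-variable evaluations and comultiplicativity, plus the computation $\rho(Q(z)) = \rho(E(z))\rho(H(z))$. The problem is the converse. You correctly reduce it to the claim that every Schur $P$-positive specialization of $\SymSh$ is the restriction of a Schur positive specialization of $\Sym$, and you correctly observe that this cannot be obtained by a formal extension argument. But your proposed proof of that claim --- ``Edrei's theorem on totally positive sequences in its Pfaffian (spin) form,'' or equivalently the identification of the extreme Schur $P$-positive specializations as the $\phi_\alpha$'s and $\pi_\gamma$ --- \emph{is} Nazarov's theorem, i.e., the statement being proved. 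The Pfaffian analogue of Edrei's theorem is not a classical result available as a black box; it is exactly the content of the cited paper of Nazarov, and the Vershik--Kerov ergodic argument you offer as an alternative is a genuine but substantial program that you only name, never execute. So the step you label ``the one genuinely analytic ingredient'' is circular as written, and the entire converse rests on it.

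The route the paper has in mind (the proof of Theorem~\ref{main-thm2} run with $\beta=0$) needs no analytic input beyond Theorem~\ref{schur-thm}, because it reverses the direction of the comparison map: instead of extending $\rho$ from $\SymSh$ out to $\Sym$, one pulls back along the bialgebra morphism $\Theta^{(0)} : \Sym \to \SymSh$ of \eqref{bTheta-eq}, which sends $s_\lambda \mapsto P_{(\lambda+\delta)/\delta} = Q_{(\lambda+\delta)/\delta}$ and in particular $h_n \mapsto q_n$. Each $P_{(\lambda+\delta)/\delta}$ is an $\NN$-linear combination of $P_\nu$'s (the $\beta=0$ case of Lemma~\ref{theta-lem}; classically this is Stembridge's shifted Littlewood--Richardson rule), so $\rho \circ \Theta^{(0)}$ is a Schur positive specialization of $\Sym$, and Theorem~\ref{schur-thm} gives $\rho \circ \Theta^{(0)} = \phi_a \sqcup \varepsilon_b \sqcup \pi_\gamma$ with unique parameters. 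The invariance $\Theta^{(0)} \circ \omega = \Theta^{(0)}$ (Lemma~\ref{theta-lem2} at $\beta=0$), together with $\phi_a \circ \omega = \varepsilon_a$, $\pi_\gamma \circ \omega = \pi_\gamma$, cocommutativity, and uniqueness of the parameters, forces $b=a$. Hence
$\sum_{n\geq 0} \rho(q_n) z^n = (\rho\circ\Theta^{(0)})(H(z)) = e^{\gamma z} \prod_{n\geq 1} \tfrac{1+a_nz}{1-a_nz}$,
and since the $q_n$ generate $\SymSh$ as an $\RR$-algebra, $\rho$ must coincide with the restriction of $\phi_a \sqcup \pi_{\gamma/2}$; renaming $\gamma/2$ as $\gamma$ gives the stated form. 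This pull-back along $\Theta^{(0)}$ is the idea missing from your argument, and it is what lets the shifted classification follow purely algebraically from the classical Edrei--Thoma theorem.
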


One can derive this theorem in a direct algebraic way from Theorem~\ref{schur-thm}.
The argument is essentially the same as the proof of Theorem~\ref{main-thm2}, just changing various
constructions and properties involving $\beta=1$ to their simplified versions with $\beta=0$.

\section{Positive K-theoretic specializations}
 
This section contains our main results and applications.
The first two subsections review the precise definitions of $\bG_{\lambda\ss\mu}$,
$\bGP_{\lambda\ss\mu}$, and $\bGQ_{\lambda\ss\mu}$, and the algebraic structures they generate.
Our main new results are Theorems~\ref{main-thm2} and \ref{main-thm3} and Corollary~\ref{main-new-cor}.

\subsection{Set-valued tableaux}\label{set-tab-sect}

Let $D\subseteq E$ be finite subsets of $ \PP\times\PP$ and define the \defn{interior} of $D$ to be the set
\be
\inn(D) = \{ (i,j) \in D: (i+1,j)\in D\text{ or }(i,j+1)\in D\}.
\ee
 For any finite sets $A,B\subset \RR $,
write $A \preceq B$ if either set is empty or if $\max(A) \leq \min(B)$. 

\begin{definition}
A \defn{set-valued tableau} of shape $E\doublebar D$ is a 
map 
$T$ that assigns a finite set $T_{ij} \subset \RR_{>0}$ to each 
$(i,j) \in E\setminus \inn(D)$ such that 
$T_{ij}$ is nonempty if $(i,j) \in E\setminus D$  and such that
\[T_{ij} \preceq T_{i,j+1}\quand T_{ij} \preceq T_{i+1,j}\quad\text{for all $(i,j) \in \PP\times \PP$}\]
under that convention that $T_{ij}=\varnothing$ if $(i,j) \notin E\setminus\inn(D)$.
For any such tableau $T$ define
\[\textstyle |T| = \sum_{(i,j)\in E\setminus \inn(D)} |T_{ij}|
\quand
 x^T =\prod_{(i,j)\in E\setminus \inn(D)} \prod_{a \in T_{ij}} x_{ \lceil a\rceil}.
 \]
 \end{definition}

\begin{example}
Suppose $E = D \sqcup \{ (1,8),(1,9), (2,6), (3,5),(3,6), (4,4)\}$ where
\[D = \{ (1,1),(1,2),(1,3),(1,4),(1,5),(1,6),(1,7), (2,2),(2,3),(2,4), (2,5), (3,3),(3,4)\}.\]
In the following picture, the positions indicated by ``$\ytabc{0.4cm}{\none[\cdot]}$'' represent the elements of $\inn(D)$, 
while the boxes $\ytabc{0.4cm}{*(lightgray)}$ 
and $\ytabc{0.4cm}{\ }$
make up $D\setminus\inn(D)$ and $E\setminus D$, respectively:
\[
\ytabc{0.4cm}{ \none[\cdot] & \none[\cdot]  & \none[\cdot]  & \none[\cdot]  & \none[\cdot]  & \none[\cdot]  & *(lightgray) & \ \\
\none & \none[\cdot]  & \none[\cdot]  & \none[\cdot]  & \none[\cdot]  & *(lightgray) \\
\none& \none & \none[\cdot]  & \none[\cdot]  & *(lightgray) & \ \\
\none & \none & \none & \
}.
\]
If we set $i' = i-\frac{1}{2}$ then two examples of set-valued tableaux $T$ of shape $E\ss D$ are 
\[
\ytabc{0.6cm}{ \none[\cdot]  & \none[\cdot]  & \none[\cdot]  & \none[\cdot]  & \none[\cdot]  & \none[\cdot]  & *(lightgray)  & 2 \\
\none & \none[\cdot]  & \none[\cdot]  & \none[\cdot]  & \none[\cdot]  & *(lightgray) 12 \\
\none& \none & \none[\cdot]  & \none[\cdot]  &*(lightgray) 13 & 346 \\
\none & \none & \none & 24
}
\quand
\ytabc{0.6cm}{ \none[\cdot]  & \none[\cdot]  & \none[\cdot]  & \none[\cdot]  & \none[\cdot]  & \none[\cdot]  & *(lightgray) 123 & 3 \\
\none & \none[\cdot]  & \none[\cdot]  & \none[\cdot]  & \none[\cdot]  & *(lightgray) 12' \\
\none& \none & \none[\cdot]  & \none[\cdot]  &*(lightgray)  & 2'4 \\
\none & \none & \none & 4'6
}
\]
and in both cases we have $|T|=10$ and $x^T = x_1^2 x_2^3 x_3^2 x_4^2 x_6$.
\end{example}

Fix a parameter $\beta\in \RR$.
Choose partitions $\mu\subseteq\lambda$ and 
 recall that the \defn{diagram} of   $\lambda$ is the set
 \[\D_\lambda = \{ (i, j) : i\in \PP\text{ and }1\leq \lambda_j \leq i\}.\]
  Let $\SVT(\lambda\ss\mu)$  be the collection of set-valued tableaux $T$ of shape $\D_\lambda\ss\D_\mu$
 whose values $T_{ij}$ are all sets of positive integers.
 
% \eric{example todo}
 
 \begin{definition}[\cite{Buch2002}]
The \defn{symmetric Grothendieck function} of shape $\lambda\ss\mu$  is
\[\textstyle \bG_{\lambda \doublebar \mu} = \sum_{T\in \SVT(\lambda\ss\mu)} \beta^{|T|-|\lambda/\mu|} x^T \in \RR\llbracket \bfx\rrbracket.
\]
We also define $\bG_{\lambda}=\bG_{\lambda \doublebar \emptyset}$
and for convenience let
$\bG_{\lambda\ss\mu}=0$ when $\mu\not\subseteq \lambda$. 
\end{definition}

\begin{example}
If $0< m\neq n$ then   $\bG_{(n) \doublebar (m)} = \bG_{(n-m)} + \beta \bG_{(n-m+1)}$
where we set $\bG_{(0)}=1$.
\end{example}

Now suppose $\mu\subseteq\lambda$ are strict partitions and
 recall that the \defn{shifted diagram} of   $\lambda$ is the set
 \[\SD_\lambda = \{ (i, i+j-1) : (i,j) \in \D_\lambda\}.\]
Let $\ShSVT_Q(\lambda\ss\mu)$ be the set of set-valued tableaux $T$ of shape $\SD_\lambda\ss\SD_\mu$
with
\be
T_{ij} \subset \tfrac{1}{2}\PP
\quand
T_{ij} \cap T_{i,j+1} \subset\PP
 \quand T_{ij} \cap T_{i+1,j} \subset\PP-\tfrac{1}{2}
\ee for all $(i,j) \in \PP\times \PP$. Also define
\be 
\ShSVT_P(\lambda\ss\mu) = \left\{T \in \ShSVT_Q(\lambda\ss\mu):
T_{ij} \subset \PP\text{ whenever }i=j\right\}.
\ee

%\eric{todo example}

\begin{definition}[\cite{IkedaNaruse,LM}]
The \defn{$K$-theoretic Schur $Q$-function} of shape $\lambda\ss\mu$  is 
\[\textstyle \bGQ_{\lambda \doublebar \mu} = \sum_{T\in \ShSVT_Q(\lambda\ss\mu)} \beta^{|T|-|\lambda/\mu|} x^T
\in \RR\llbracket \bfx\rrbracket.
\]
Similarly, the
 \defn{$K$-theoretic Schur $P$-function} of shape $\lambda\ss\mu$ is
\be\textstyle \bGP_{\lambda \doublebar \mu} = \sum_{T\in \ShSVT_P(\lambda\ss\mu)} \beta^{|T|-|\lambda/\mu|} x^T 
 \in \RR\llbracket \bfx\rrbracket.
\ee
Let 
$\bGP_{\lambda}=\bGP_{\lambda \doublebar \emptyset}
$
and
$
\bGQ_{\lambda}=\bGQ_{\lambda \doublebar \emptyset}
$
and define
$\bGP_{\lambda \ss\mu}=\bGQ_{\lambda \ss\mu}=0$ when $\mu\not\subseteq\lambda$.
\end{definition}

We abbreviate by setting 
\[\bG_0 = \bGP_0 =\bGQ_0 = 1,
\quad \bG_n=\bG_{(n)}, \quad\bGP_n=\bGP_{(n)}, \quand\bGQ_n=\bGQ_{(n)}\] for $n \in \PP$.
Notice that 
$
\textstyle
\bG_1 = \bGP_1 = \sum_{n\geq 1} e_n = E(1)-1.
$
One also has \cite{ChiuMarberg}
\be\label{GQ-GP-n-eq}
\bGQ_n = 2\bGP_n + \beta \bGP_{n+1} \quad\text{for all }n \in \PP.
\ee
In general,  $\bGQ_\lambda$ is an $\RR$-linear but not always an $\RR_{\geq0}$-linear combination of $\bGP$-functions \cite{ChiuMarberg}.

\begin{example}\label{sh-ss-ex}
If $0< m\neq n$ then   $\bGQ_{(n) \doublebar (m)} = \bGQ_{n-m} + \beta \bGQ_{n-m+1}$
 while 
\[ 
\bGP_{(n) \doublebar (m)}= \bGQ_{n-m} + \begin{cases}
  \beta \bGQ_{n-m+1}  & \text{if }m>1 \\
  \beta \bGP_{n}  & \text{if }m=1.
  \end{cases}
  \]
\end{example}

 As a variant of the above constructions, for any strict partitions $\mu\subseteq\lambda$ let 
\[\ShSVT_P(\lambda/\mu)\subset \ShSVT_P(\lambda\ss\mu)
\quand \ShSVT_Q(\lambda/\mu)\subset \ShSVT_Q(\lambda\ss\mu)
\]
be the subsets of tableaux $T$ that have $T_{ij}=\varnothing$ for all $(i,j)\in \SD_\mu$.

\begin{definition}[\cite{LM}]
The \defn{$K$-theoretic Schur $P$- and $Q$-functions} of shape  $\lambda/\mu$ are
\[
\bGP_{\lambda / \mu} = \sum_{T\in \ShSVT_P(\lambda/\mu)} \beta^{|T|-|\lambda/\mu|} x^T
\quand
\bGQ_{\lambda / \mu} = \sum_{T\in \ShSVT_Q(\lambda/\mu)} \beta^{|T|-|\lambda/\mu|} x^T.
\]
\end{definition}

These power series are related to $\bGQ_{\lambda \ss\mu}$ and $\bGP_{\lambda \ss\mu}$ 
by the identities
\be\label{singlebar-eq}
\bGP_{\lambda /\mu}=
\sum_{\nu\subseteq\mu}  (-\beta)^{|\mu|-|\nu|} \bGP_{\lambda \ss \nu}
\quand
\bGQ_{\lambda /\mu}=
\sum_{\nu\subseteq\mu}  (-\beta)^{|\mu|-|\nu|} \bGQ_{\lambda \ss \nu}
\ee
where the sums are indexed by strict partitions  \cite[Cor.~5.7]{LM}.

\subsection{Unbounded symmetric functions}\label{unbounded-sect}

Let $\mSym\subset \RR\llbracket \bfx \rrbracket$ be the subalgebra 
of all symmetric formal power series, not necessarily of bounded degree.
Each element of $\mSym$ can be uniquely expressed as an infinite linear combination of Schur functions $s_\lambda$,
and we define $\mSym\htimes\mSym$ to be the real vector space of infinite $\RR$-linear combinations of tensor $s_\lambda \otimes s_\mu$.
This space is larger than the usual tensor product $\mSym\otimes \mSym$.

Each $\bG_{\lambda\ss\mu}$ is a finite $\RR$-linear combination of $\bG_\nu$'s \cite{Buch2002} and 
each $\bG_\lambda$ is an infinite $\RR$-linear combination of Schur functions $s_\mu$ \cite{Lenart}.
Hence, the $\RR$-vector space $\bGamma$ spanned by all $\bG_{\lambda\ss\mu}$'s has
\[
 \bGamma \subsetneq \mSym
\quand
 \bGamma\otimes \bGamma\subsetneq \mSym\htimes\mSym.
\]
Similarly, when $\lambda$ and $\mu$ are strict partitions, each  
$\bGP_{\lambda\ss\mu}$
(respectively, $\bGQ_{\lambda\ss\mu}$)
 is a finite $\RR$-linear combination of $\bGP_\nu$'s (respectively, $\bGQ_{\nu}$'s)   \cite[Prop.~4.33]{MarbergHopf}.
 In turn, each $\bGQ_\nu$ is a finite linear combination of $\bGP$-functions \cite{ChiuMarberg} 
 and each $\bGP_\nu$ is
 a finite linear combination of $\bG$-functions \cite[Thm.~3.27]{MarScr}.
 Hence the  vector  spaces $\bGammaP = \RR\spanning\left\{ \bGP_{\lambda\ss\mu}  : \text{strict }\mu\subseteq\lambda \right\}$ and $\bGammaQ= \RR\spanning\left\{ \bGQ_{\lambda\ss\mu} :\text{strict }\mu\subseteq\lambda\right\}$ from the introduction satisfy
 \[
 \bGammaQ\subsetneq \bGammaP\subsetneq \bGamma 
 \quand
 \bGammaQ\otimes \bGammaQ \subsetneq \bGammaP\otimes \bGammaP \subsetneq \mSym\htimes \mSym.
 \]
 Somewhat surprisingly, the subspaces  $\bGammaQ\subsetneq \bGammaP\subsetneq \bGamma$
 are subalgebras of $\mSym$ \cite{Buch2002,LM2}.

 \begin{remark}\label{rescaling-rem}
 Let $\fkR : \RR\llbracket\bfx\rrbracket \to \RR\llbracket\bfx\rrbracket$ be the rescaling morphism $f \mapsto f(|\beta| x_1, |\beta|x_2, |\beta|x_3,\dots)$.
 \ben
 \item[(a)] First assume 
  $\beta >0$. Then we have
 \[
 \fkR(\pG_{\lambda\ss\mu}) = \beta^{|\lambda|-|\mu|} \bG_{\lambda\ss\mu},
 \ \
  \fkR(\pGP_{\lambda\ss\mu}) = \beta^{|\lambda|-|\mu|} \bGP_{\lambda\ss\mu},
  \ \ 
   \fkR(\pGQ_{\lambda\ss\mu}) = \beta^{|\lambda|-|\mu|} \bGQ_{\lambda\ss\mu},
 \]
and so $\fkR$ restricts to algebra isomorphisms $\pGamma\xrightarrow{\sim}\bGamma$ and
$\pGammaP\xrightarrow{\sim}\bGammaP$ and $\pGammaQ\xrightarrow{\sim}\bGammaQ$.
Composition with $\fkR$ gives a bijection from
$\bG$-  to $\pG$-positive specializations,
as well as from
$\bGP$-  to $\pGP$-positive specializations 
and 
from 
$\bGQ$-  to $\pGQ$-positive specializations.

\item[(b)] Similarly, when $\beta <0$ the map
%
%Suppose on the other hand that $\beta <0$. Then we have
% \[
% \fkR(\mG_{\lambda\ss\mu}) = |\beta|^{|\lambda|-|\mu|} \bG_{\lambda\ss\mu},
% \ \
%  \fkR(\mGP_{\lambda\ss\mu}) = |\beta|^{|\lambda|-|\mu|} \bGP_{\lambda\ss\mu},
%  \ \
%   \fkR(\mGQ_{\lambda\ss\mu}) = |\beta|^{|\lambda|-|\mu|} \bGQ_{\lambda\ss\mu},
% \]
%and so 
$\fkR$ restricts to algebra isomorphisms $\mGamma\xrightarrow{\sim}\bGamma$ and
$\mGammaP\xrightarrow{\sim}\bGammaP$ and $\mGammaQ\xrightarrow{\sim}\bGammaQ$,
and composition with $\fkR$ gives a bijection from
$\bG$-, $\bGP$-, and $\bGQ$-positive specializations  to $\mG$-, $\mGP$-, and $\mGQ$-positive specializations
%,
%as well as from
%$\bGP$-  to $\mGP$-positive specializations 
%and 
%from 
%$\bGQ$-  to $\mGQ$-positive specializations.

\item[(c)] Finally if $\beta=0$ then $\bGamma= \Sym$ and $\bGammaP = \bGammaQ =\SymSh$ from Section~\ref{nazar-sect}.
\een
Thus, to classify all $\bG$-, $\bGP$-, and $\bGQ$-positive specializations
of our three $\RR$-algebras, one just needs to understand the classical case when $\beta=0$ and  the two other cases when $\beta =\pm 1$.
 \end{remark}

A linear map $f$ with domain $\mSym$ is \defn{continuous} if $f\(\sum_\lambda c_\lambda s_\lambda\) = \sum_\lambda c_\lambda f(s_\lambda)$ for all $c_\lambda \in \RR$.
The counit and coproduct of $\Sym$ extend to continuous linear maps $\mSym \to \RR$ and $\mSym\to \mSym\htimes\mSym$.
Restricting these makes $\bGamma$ into a cocommutative bialgebra \cite[Cor.~6.7]{Buch2002} with
\be
  \Delta(\bG_{\lambda\doublebar\mu}) = \sum_{\mu\subseteq \kappa\subseteq \lambda} \bG_{\kappa\doublebar\mu} \otimes \bG_{\lambda\doublebar\kappa}.
  \ee
Additionally, the subspaces $\bGammaP$ and $\bGammaQ$ are sub-bialgebras of $\bGamma$
with coproduct formulas 
\be\label{delta-eq}
  \Delta(\bGP_{\lambda\doublebar\mu}) = \sum_{\mu\subseteq \kappa\subseteq \lambda} \bGP_{\kappa\doublebar\mu} \otimes \bGP_{\lambda\doublebar\kappa} \quand
    \Delta(\bGQ_{\lambda\doublebar\mu}) = \sum_{\mu\subseteq \kappa\subseteq \lambda} \bGQ_{\kappa\doublebar\mu} \otimes \bGQ_{\lambda\doublebar\kappa}
  \ee
  where the sums are indexed by strict partitions   \cite[Prop.~4.33]{MarbergHopf}.

  \begin{remark}\label{semigroup-rem}
     In view of the above formulas, the generating sets $\{ \bG_{\lambda\ss\mu}\}$, $\{ \bGP_{\lambda\ss\mu}\}$, and $\{ \bGQ_{\lambda\ss\mu}\}$ are comultiplicative subsets of $\bGamma$, $\bGammaP$, and $\bGammaQ$.
 Hence, the sets of $\bG$-, $\bGP$-, and $\bGQ$-positive specializations for each bialgebra
 form commutative semigroups under the union operation.
 \end{remark}
 
We mention two other notable facts.
First, each $\bGP_\lambda$ and $\bGQ_\lambda$ is a $\NN[\beta]$-linear combination of $\bG_\nu$'s \cite[Thms.~3.27 and 3.40]{MarScr}.
Second,
the multiplicative and comultiplicative structure constants for the three bialgebras $\bGammaQ\subsetneq \bGammaP\subsetneq\bGamma$
 all belong to $\NN[\beta]$;
see \cite[Cors.~5.5 and 6.7]{Buch2002} for $\bGamma$ and \cite[Thm.~1.6]{LM2} and \cite[Cor.~4.42 and Rem.~4.43]{Mar2025}
for $\bGammaP$ and $\bGammaQ$.
%Hence, these structure constants are nonnegative when $\beta\geq 0$ (and alternate in sign when $\beta<0$), which
This implies:

 \begin{proposition}\label{ppp-prop}
 Suppose  $\beta\geq 0$ and 
 $f$, $g$, and $h$ are specializations of $\bG$, $\bGP$, and $\bGQ$.
Then these specializations are respectively $\bG$-, $\bGP$-, and $\bGQ$-positive 
if and only if 
\[f(\bG_\lambda)\geq 0,
\quad 
g(\bGP_\mu)\geq 0,
\quand 
h(\bGQ_\mu)\geq 0
\]
for all partitions $\lambda$ and all strict partitions $\mu$.
Moreover, if $f$ is $\bG$-positive then its restrictions to $\bGammaP$ and $\bGammaQ$ are respectively $\bGP$- and $\bGQ$-positive.
% \ben
% \item[(a)] The specialization $f$ is $\bG$-positive if and only if $f(\bG_\lambda)\geq 0$ for all partitions $\lambda$.
% 
% \item[(b)] The specialization $g$ is $\bGP$-positive if and only if $g(\bGP_\lambda)\geq 0$ for all strict partitions $\lambda$.
%
% \item[(c)] The specialization $h$ is $\bGQ$-positive if and only if $h(\bGQ_\lambda)\geq 0$ for all strict partitions $\lambda$.
% 
% \een
 \end{proposition}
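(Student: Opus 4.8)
The plan is to reduce everything to the single observation that, when $\beta\geq 0$, each skew function is a \emph{nonnegative} real linear combination of the corresponding straight basis functions. Granting this, the forward implication of the stated equivalence is immediate: the straight functions $\bG_\lambda=\bG_{\lambda\ss\emptyset}$, $\bGP_\mu=\bGP_{\mu\ss\emptyset}$, and $\bGQ_\mu=\bGQ_{\mu\ss\emptyset}$ are themselves among the skew functions, so positivity relative to all skew functions already forces $f(\bG_\lambda)\geq 0$, $g(\bGP_\mu)\geq 0$, and $h(\bGQ_\mu)\geq 0$. For the reverse implication I would expand $f(\bG_{\lambda\ss\mu}) = \sum_\nu c^\nu_{\lambda,\mu}\, f(\bG_\nu)$ and note that each summand is a product of two nonnegative reals; the identical computation handles $g$ on $\bGammaP$ and $h$ on $\bGammaQ$.

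To establish the key observation, I would read off the skew-expansion coefficients directly from the coproduct. For $\bGamma$, setting $\mu=\emptyset$ in the coproduct formula gives $\Delta(\bG_\lambda)=\sum_{\kappa\subseteq\lambda}\bG_\kappa\otimes\bG_{\lambda\ss\kappa}$, a finite sum. Comparing this with the expansion of $\Delta(\bG_\lambda)$ in the basis $\{\bG_\alpha\otimes\bG_\gamma\}$ shows that the coefficient of $\bG_\gamma$ in $\bG_{\lambda\ss\kappa}$ equals the comultiplicative structure constant of $\bGamma$ attached to the triple $(\lambda;\kappa,\gamma)$. Since the latter lies in $\NN[\beta]$ by the facts recalled just before the proposition, it is nonnegative whenever $\beta\geq 0$. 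Running the identical argument with the coproduct formulas in \eqref{delta-eq} yields the analogous statements for $\bGammaP$ and $\bGammaQ$ in their respective bases $\{\bGP_\nu\}$ and $\{\bGQ_\nu\}$.

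For the final ``Moreover'' claim, suppose $f$ is $\bG$-positive and restrict it to the subalgebra $\bGammaP\subseteq\bGamma$. By the equivalence just proved (applied to $\bGammaP$), it suffices to check that $f(\bGP_\mu)\geq 0$ for every strict partition $\mu$. But each $\bGP_\mu$ is an $\NN[\beta]$-linear combination of $\bG_\nu$'s, so $f(\bGP_\mu)$ is a nonnegative combination of the values $f(\bG_\nu)\geq 0$. The same reasoning, using that each $\bGQ_\mu$ is an $\NN[\beta]$-combination of $\bG_\nu$'s, shows that the restriction of $f$ to $\bGammaQ$ is $\bGQ$-positive.

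I expect the only genuine content to be the duality bookkeeping in the second paragraph, namely matching the two expansions of the coproduct so that the skew-to-straight coefficients are correctly identified with comultiplicative structure constants (and checking that all relevant sums are finite, so no convergence subtlety arises). Everything else is the formal propagation of nonnegativity through finite $\NN[\beta]$-combinations, which becomes routine once $\beta\geq 0$ is assumed.
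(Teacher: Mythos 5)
Your proposal is correct and follows essentially the same route as the paper, which deduces the proposition directly from the two facts stated immediately before it: reading the skew-to-straight expansion coefficients off the coproduct formulas (so they coincide with the comultiplicative structure constants, which lie in $\NN[\beta]$ and hence are nonnegative when $\beta\geq 0$), and using the $\NN[\beta]$-expansion of each $\bGP_\mu$ and $\bGQ_\mu$ in the $\bG_\nu$'s for the ``Moreover'' claim. Your write-up just makes this bookkeeping explicit; nothing further is needed.
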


  The involution $\omega$ from \eqref{omega-def}   extends to a continuous linear map $\mSym\to\mSym$,
which restricts to an algebra morphism (but not an automorphism) $\bGamma \to \mSym$ satisfying \cite[Thm.~6.2]{Yel19}
\be\label{omega-eq}
\omega(\bG_{\lambda\doublebar\mu}) = \bG_{\lambda^\top\doublebar\mu^\top}(\tfrac{x_1}{1-\beta x_1}, \tfrac{x_2}{1-\beta x_2}, \tfrac{x_3}{1-\beta x_3},\dots)
\ee
where $\lambda^\top$ is the transpose of a partition $\lambda$.
In $\bGammaP$ and $\bGammaQ$ we have the simpler formulas 
\be\label{omega-pq-eq}
\omega(\bGP_{\lambda\doublebar\mu}) = \bGP_{\lambda\doublebar\mu}(\tfrac{x_1}{1-\beta x_1}, \tfrac{x_2}{1-\beta x_2},\dots)
\quand
\omega(\bGQ_{\lambda\doublebar\mu}) = \bGQ_{\lambda\doublebar\mu}(\tfrac{x_1}{1-\beta x_1}, \tfrac{x_2}{1-\beta x_2},\dots)
\ee
which do not involve any change in the strict partition indices \cite[Prop.~5.6 and Cor.~6.6]{LM}.
 
  Let $\bOmega : \bGamma \to \bGamma$ be the linear map with 
  \be \bOmega(\bG_\lambda) = \bG_{\lambda^\top}\quad\text{for all partitions $\lambda$}.\ee
Then let $\bPhi $ and $\bPsi$ be the maps $\mSym \to \mSym$ with the formulas
\be
\bPhi(f) = f(\tfrac{x_1}{1+\beta x_1},\tfrac{x_2}{1+\beta x_2},\tfrac{x_3}{1+\beta x_3},\dots)
\quand
\bPsi(f) = f(\tfrac{x_1}{1-\beta x_1},\tfrac{x_2}{1-\beta x_2},\tfrac{x_3}{1-\beta x_3},\dots).
\ee

 \begin{lemma}\label{bOmega-lem}
The map $\bOmega$
 is a bialgebra involution of $\bGamma$ with $\bOmega = \bPhi \circ \omega = \omega \circ \bPsi$ and
 \[
\bOmega(\bG_{\lambda\doublebar\mu}) = \bG_{\lambda^\top\doublebar\mu^\top},
\quad
\bOmega(\bGP_{\nu\doublebar\kappa}) = \bGP_{\nu\doublebar\kappa},
\quand
\bOmega(\bGQ_{\nu\doublebar\kappa}) = \bGQ_{\nu\doublebar\kappa}
\]
for all partitions $\lambda$, $\mu$ and all strict partitions $\nu$, $\kappa$.
 \end{lemma}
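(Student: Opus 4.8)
The plan is to deduce every assertion from the single factorization $\bOmega=\bPhi\circ\omega$, after first recording that $\bPhi$ and $\bPsi$ are mutually inverse. Since the substitutions $x\mapsto\frac{x}{1+\beta x}$ and $y\mapsto\frac{y}{1-\beta y}$ invert one another, the maps $\bPhi$ and $\bPsi$ are mutually inverse continuous $\RR$-algebra automorphisms of $\mSym$; in particular $\bPhi\circ\bPsi=\bPsi\circ\bPhi=\id$. Each also preserves the grading degree-by-degree, so these compositions are well defined on the completion.

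Next I would verify the central identity $\bPhi\circ\omega=\omega\circ\bPsi$ on all of $\mSym$ by a generating-function computation. Both sides are continuous algebra endomorphisms of $\mSym$, so it suffices to check that they agree on the algebra generators $h_n$, equivalently on the series $H(z)=\sum_n h_n z^n$. Using $\omega(H(z))=E(z)$ one finds $(\bPhi\circ\omega)(H(z))=\prod_i\frac{1+(\beta+z)x_i}{1+\beta x_i}$. On the other side, $\bPsi(H(z))=\prod_i\frac{1-\beta x_i}{1-(\beta+z)x_i}$ factors as the product of $\prod_i(1-\beta x_i)=E(-\beta)$ and $\prod_i\frac{1}{1-(\beta+z)x_i}=H(\beta+z)$; applying the continuous algebra morphism $\omega$ sends this to $H(-\beta)E(\beta+z)=\prod_i\frac{1+(\beta+z)x_i}{1+\beta x_i}$. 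The two products coincide, so matching coefficients of $z^n$ gives $(\bPhi\circ\omega)(h_n)=(\omega\circ\bPsi)(h_n)$, and the identity follows by continuity.

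With this in hand the explicit formulas fall out of \eqref{omega-eq} and \eqref{omega-pq-eq}. Rewriting \eqref{omega-eq} as $\omega(\bG_{\lambda\doublebar\mu})=\bPsi(\bG_{\lambda^\top\doublebar\mu^\top})$ and applying $\bPhi$ yields $(\bPhi\circ\omega)(\bG_{\lambda\doublebar\mu})=\bG_{\lambda^\top\doublebar\mu^\top}$; taking $\mu=\emptyset$ shows $\bPhi\circ\omega$ agrees with the defining action of $\bOmega$ on the basis $\{\bG_\lambda\}$, so $\bOmega=\bPhi\circ\omega=\omega\circ\bPsi$ on $\bGamma$ and $\bOmega(\bG_{\lambda\doublebar\mu})=\bG_{\lambda^\top\doublebar\mu^\top}$. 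Likewise \eqref{omega-pq-eq} reads $\omega(\bGP_{\nu\doublebar\kappa})=\bPsi(\bGP_{\nu\doublebar\kappa})$, whence $\bOmega(\bGP_{\nu\doublebar\kappa})=\bPhi(\bPsi(\bGP_{\nu\doublebar\kappa}))=\bGP_{\nu\doublebar\kappa}$, and the identical argument handles $\bGQ$.

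Finally I would confirm that $\bOmega$ is a bialgebra involution of $\bGamma$. It is an algebra morphism as the composite of the algebra morphism $\omega\colon\bGamma\to\mSym$ with the algebra automorphism $\bPhi$, and its image lies in $\bGamma$ since it sends each basis element $\bG_\lambda$ to $\bG_{\lambda^\top}\in\bGamma$; it is an involution because $\bOmega^2(\bG_\lambda)=\bG_{(\lambda^\top)^\top}=\bG_\lambda$. Coproduct compatibility is checked directly: expanding $\Delta(\bOmega(\bG_{\lambda\doublebar\mu}))=\Delta(\bG_{\lambda^\top\doublebar\mu^\top})$ with the coproduct formula and reindexing the intermediate partitions via $\kappa=\nu^\top$ (using that transposition preserves containment) reproduces $(\bOmega\otimes\bOmega)\Delta(\bG_{\lambda\doublebar\mu})$, while unit and counit compatibility are immediate from $\bOmega(\bG_\emptyset)=\bG_\emptyset$. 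I expect the main obstacle to be the generating-function identity $\bPhi\circ\omega=\omega\circ\bPsi$ together with the bookkeeping required to justify that $\omega$, $\bPhi$, and $\bPsi$ are genuine continuous algebra endomorphisms of the completed ring $\mSym$, so that agreement on the $h_n$ really does propagate to all of $\bGamma$.
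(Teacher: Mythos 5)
Your proof is correct, and it reaches the conclusion by a partly different route than the paper's. The two arguments treat the explicit formulas identically: both rewrite \eqref{omega-eq} and \eqref{omega-pq-eq} as $\omega(\bG_{\lambda\doublebar\mu})=\bPsi(\bG_{\lambda^\top\doublebar\mu^\top})$ and $\omega(\bGP_{\nu\doublebar\kappa})=\bPsi(\bGP_{\nu\doublebar\kappa})$, then apply $\bPhi\circ\bPsi=\id$. The difference lies in how the operator identity $\bPhi\circ\omega=\omega\circ\bPsi$ is obtained. You prove it directly on all of $\mSym$ by a generating-function computation, checking that both sides send $H(z)$ to $\prod_{i}\frac{1+(\beta+z)x_i}{1+\beta x_i}$ and then propagating from the generators $h_n$ via the algebra morphism property and continuity. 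The paper instead argues abstractly: since $\omega^2=\id$ and $\bPhi\circ\bPsi=\bPsi\circ\bPhi=\id$, the map $\omega\circ\bPsi$ is a two-sided inverse of $\bPhi\circ\omega=\bOmega$; but $\bOmega$ is an involution (transposition is), so $\bOmega=\bOmega^{-1}=\omega\circ\bPsi$. The paper's argument is shorter and computation-free; yours proves the stronger statement that $\bPhi\circ\omega=\omega\circ\bPsi$ as endomorphisms of all of $\mSym$ (not merely after restriction to $\bGamma$) and does not need to know in advance that $\bOmega$ squares to the identity. You also make explicit the coproduct compatibility, via the reindexing $\kappa\mapsto\kappa^\top$ in $\Delta(\bG_{\lambda^\top\doublebar\mu^\top})$, a point the paper asserts follows from \eqref{omega-eq} and \eqref{omega-pq-eq} without detail. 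One small imprecision: $\bPhi$ and $\bPsi$ do not ``preserve the grading degree-by-degree'' --- the substitution $x_i\mapsto\frac{x_i}{1+\beta x_i}$ sends a homogeneous term to an inhomogeneous series --- but they do preserve the filtration (they never lower degree), which is what actually makes them well defined and continuous on the completion; your argument goes through verbatim with this correction.
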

 
 \begin{proof}
The identities \eqref{omega-eq} and \eqref{omega-pq-eq} imply that $\bOmega = \bPhi \circ \omega$  is  a bialgebra morphism,
and that the last three formulas hold.
Since $\omega$ is an involution we have  $( \bPhi \circ \omega) \circ (\omega \circ \bPsi) =(\omega \circ \bPsi)\circ ( \bPhi \circ \omega) = 1$,
so as $\bOmega$ is also an involution it must hold that $\bOmega = \bPhi \circ \omega = \omega \circ \bPsi$.
 \end{proof}

 The multiplication formulas in \cite{Lenart} (restated as \cite[Eq.~(5)]{Yel20}) imply that
\be\label{G-pieri}
\bG_1 \bG_\mu = \sum_{\lambda} \beta^{|\lambda|-|\mu|} \bG_\lambda
\ee
where the sum is over all partitions $\lambda$ with $\mu\subsetneq\lambda$ such that $\D_\lambda\setminus\D_\mu$ 
is a \defn{rook strip} in the sense of containing at most one box in each row and column. For example,
\[ \bG_1 \bG_{(3,1)} = \bG_{(4,1)} + \bG_{(3,2)} + \bG_{(3,1,1)} + \beta \bG_{(4,2)} + \beta \bG_{(4,1,1)}+ \beta \bG_{(3,2,1)}+\beta^2  \bG_{(4,2,1)}.\]
Similarly, if $\mu$ is a strict partition then \cite[Cor.~4.8]{BuchRavikumar} implies that
\be\label{GP-pieri}
\bGP_1 \bGP_\mu = \sum_{\lambda} \beta^{|\lambda|-|\mu|} \bGP_\lambda
\ee
where the sum is over all strict partitions $\lambda\supsetneq \mu$ such that $\SD_\lambda\setminus\SD_\mu$ 
is a  rook strip. For example,
\[ \bGP_1 \bGP_{(3,1)} = \bGP_{(4,1)} + \bGP_{(3,2)}  + \beta \bGP_{(4,2)}.
\]
We also have 
 \be
 (\bGP_1)^2 = \bGP_2
 \qquand \(1+\beta\bGP_1\)^2 = 1+\bGQ_1.\ee
The rule to expand the product of $\bGQ_1$ and $\bGQ_\lambda$ in the $\bGQ$-basis is more complicated \cite[Cor.~5.6]{BuchRavikumar}.
However, the formulas in \cite{BuchRavikumar,Lenart} imply
that $\bGamma$, $\bGammaP$, and $\bGammaQ$ are generated as $\RR$-algebras
by the respective sets of elements $\{ \bG_n : n \in \PP\}$,
$\{ \bGP_n : n \in \PP\}$,
and
$\{ \bGQ_n : n \in \PP\}$.

\subsection{Unsigned specializations}\label{unsigned-sect}

We now consider the bialgebras $\pGammaQ\subsetneq \pGammaP\subsetneq \pGamma$
obtained from $\bGammaQ\subsetneq \bGammaP\subsetneq \bGamma$
by setting $\beta=1$.
This section contains 
the proof of Theorem~\ref{main-thm} from the introduction.
Before presenting this, we require some preliminaries on the classification of 
the $\pG$-positive specializations of $\pGamma$ from \cite{Yel20}.

Let $a=(a_1\geq a_2 \geq a_3\geq \dots \geq 0)$ be a sequence of nonnegative real numbers with finite sum. 
Yeliussizov \cite[Lem.~4.11]{Yel20} has shown that the  map $\phi_a : \Sym \to \RR$ 
extends to a $\pG$-positive specialization of $\pGamma$ in the sense that if we write 
$
\pG_\lambda = \sum_{\mu\supseteq\lambda}  r_{\mu/\lambda} s_\mu
$ for integers $r_{\mu/\lambda}\in \NN$ (which is possible by results in \cite{Lenart}), then
the sum
$
 \sum_{\mu\supseteq\lambda}  r_{\mu/\lambda} \phi_a(s_\mu)
$
always
converges. 

The map
 $\pi_\gamma : \Sym \to \RR$ similarly extends to a $\pG$-positive specialization of $\pGamma$
 for any $\gamma \in \RR_{\geq 0}$,
 while the composition $\varepsilon_a=\phi_a \circ \omega$ extends to a well-defined algebra morphism $\pGamma\to\RR$
 (which is then another $\pG$-positive specialization)
  if and only if $\max(a)<1$
 \cite[Lem.~4.11]{Yel20}. 
 
 Following \cite{Yel20}, we write these respective extensions as
\be
\wphi_a : \pGamma\to\RR,\quad
\wpi_\gamma : \pGamma\to\RR,
\quand
\wvarepsilon_a : \pGamma\to\RR
\ee
We now present an alternate form of Theorem~\ref{y-thm1} from the introduction.

\begin{theorem}[\cite{Yel20}]
\label{y-thm3}
The $\pG$-positive specializations of $\pGamma$ are the maps  
$\rho = \wphi_\alpha \sqcup \wvarepsilon_b \sqcup \wpi_\gamma$
where  $\gamma\in\RR_{\geq 0}$ and  $a=(a_1\geq a_2 \geq \dots \geq 0)$ and $b=(1>b_1\geq b_2\geq  \dots \geq 0)$
are real sequences with
\[ \textstyle \prod_{n=1}^\infty \frac{1+a_n}{1-b_n}\in \RR.\]
The representation of $\rho = \wphi_\alpha \sqcup \wvarepsilon_b \sqcup \wpi_\gamma$
is unique and it holds that $\textstyle \rho (\pG_1)= -1 + e^\gamma \prod_{n=1}^\infty \frac{1+a_n}{1-b_n} .$

\end{theorem}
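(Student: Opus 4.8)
The plan is to reduce everything to the classical Edrei--Thoma theorem (Theorem~\ref{schur-thm}) through a single generating-function identity. The starting point is the claim that, in $\mSym\llbracket z\rrbracket$,
\[
\sum_{n\geq 0}(\pG_n+\pG_{n+1})\,z^n = (1+\pG_1)\,H(z),
\]
where $1+\pG_1=E(1)=\prod_i(1+x_i)$. I would prove this by a direct computation with one-row set-valued tableaux: on finitely many variables the one-row generating function $\sum_n\bG_{(n)}z^n$ can be written down explicitly, and at $\beta=1$ the combination $\pG_n+\pG_{n+1}$ telescopes to $(1+\pG_1)h_n$. Granting the identity, the easy direction is immediate. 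Treating $\wphi_a,\wvarepsilon_b,\wpi_\gamma$ as the continuous multiplicative extensions of $\phi_a,\varepsilon_b,\pi_\gamma$, the union-multiplicativity \eqref{12cup-eq} together with \eqref{12cup-eq0}--\eqref{12cup-eq00} gives, for $\rho=\wphi_a\sqcup\wvarepsilon_b\sqcup\wpi_\gamma$, that $\sum_n\rho(\pG_n+\pG_{n+1})z^n=C\,e^{\gamma z}\prod_n\frac{1+b_nz}{1-a_nz}$ with $C=\rho(1+\pG_1)=e^{\gamma}\prod_n\frac{1+a_n}{1-b_n}$, hence $\rho(\pG_1)=C-1$; and such $\rho$ is $\pG$-positive because each building block is (by \cite[Lem.~4.11]{Yel20}) and the $\pG$-positive specializations form a semigroup under $\sqcup$ (Remark~\ref{semigroup-rem}).

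For the converse I would pass through a Schur-positive specialization of $\Sym$. Given a $\pG$-positive $\rho$, set $C=1+\rho(\pG_1)\geq 1$ and define $\tilde\rho:\Sym\to\RR$ on the free generators by $\tilde\rho(h_n)=C^{-1}\rho(\pG_n+\pG_{n+1})$. Two facts make this useful. First, the pair $(\tilde\rho,C)$ recovers $\rho$: the generating function $\sum_n\rho(\pG_n+\pG_{n+1})z^n=C\,\tilde\rho(H(z))$ determines every $\rho(\pG_n)$ recursively starting from $\pG_0=1$, and $\pGamma=\RR[\pG_1,\pG_2,\dots]$ is free. Second, since $\rho$ is an algebra morphism it commutes with determinants, so using $(1+\pG_1)h_k=\pG_k+\pG_{k+1}$ and the Jacobi--Trudi formula $s_\lambda=\det(h_{\lambda_i-i+j})_{1\leq i,j\leq\ell}$ one obtains, for $\ell=\ell(\lambda)$,
\[
\tilde\rho(s_\lambda)=C^{-\ell}\,\rho\!\left(\det\bigl(\pG_{\lambda_i-i+j}+\pG_{\lambda_i-i+j+1}\bigr)_{1\leq i,j\leq\ell}\right).
\]

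The crux is therefore to show that this Jacobi--Trudi determinant lies in the $\NN$-span of $\{\pG_\nu\}$ (equivalently, that $(1+\pG_1)^{\ell}s_\lambda$ is $\pG$-positive at $\beta=1$); granting this, $\pG$-positivity of $\rho$ forces $\tilde\rho(s_\lambda)\geq 0$ for all $\lambda$, so $\tilde\rho$ is Schur positive. I expect this positivity to be the main obstacle: the entries $\pG_k+\pG_{k+1}=(1+\pG_1)h_k$ are individually $\pG$-positive, but expanding the determinant introduces signs, and controlling them should require either the nonnegativity of the Grothendieck structure constants (the $\NN[\beta]$-positivity recalled in Section~\ref{unbounded-sect}) together with the Pieri rule \eqref{G-pieri}, or a Lindström--Gessel--Viennot model for set-valued tableaux. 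Once $\tilde\rho$ is Schur positive, Theorem~\ref{schur-thm} yields unique $a$, $b$, $\gamma$ with $\tilde\rho(H(z))=e^{\gamma z}\prod_n\frac{1+b_nz}{1-a_nz}$, which is exactly the asserted generating function after multiplying by $C$. Finally, finiteness of $C=\rho(1+\pG_1)<\infty$ forces $\max(b)<1$ and convergence of $\prod_n\frac{1+a_n}{1-b_n}$ (as is in any case required for $\wvarepsilon_b$ to extend to $\pGamma$); comparing generating functions then identifies $\rho$ with $\wphi_a\sqcup\wvarepsilon_b\sqcup\wpi_\gamma$, and uniqueness of $(a,b,\gamma)$ is inherited from Theorem~\ref{schur-thm}.
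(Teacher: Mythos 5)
A preliminary point of comparison: the paper does not prove Theorem~\ref{y-thm3} at all --- it is imported wholesale from \cite{Yel20}, and the only argument the paper supplies is the remark that follows it, deducing the equivalence of Theorems~\ref{y-thm1} and~\ref{y-thm3} from the identity \eqref{GG-eq}. Your opening identity $\sum_{n\geq0}(\pG_n+\pG_{n+1})z^n=(1+\pG_1)H(z)$ is precisely \eqref{GG-eq} at $\beta=1$ (it is \cite[Lem.~4.7]{Yel20}), and your easy direction is the same computation as that remark, combined with \cite[Lem.~4.11]{Yel20} and the semigroup property of Remark~\ref{semigroup-rem}; that part is sound. What you are really attempting, then, is a reconstruction of Yeliussizov's proof of the hard direction, and there your proposal has two genuine gaps.

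The first gap you flag yourself: Schur positivity of $\tilde\rho$ requires that $\det\bigl(\pG_{\lambda_i-i+j}+\pG_{\lambda_i-i+j+1}\bigr)_{1\leq i,j\leq\ell}=(1+\pG_1)^{\ell}s_\lambda$ lie in $\RR_{\geq0}\spanning\{\pG_\nu\}$, and you leave this unproved, gesturing at the Pieri rule, positivity of structure constants, or an LGV argument. This is not a technical loose end; it is the mathematical core of the theorem, and none of the cited facts yields it in any direct way, since $s_\lambda$ itself is an infinite \emph{alternating} combination of $\pG_\nu$'s (in particular $s_\lambda\notin\pGamma$), so expanding the determinant produces signed terms whose cancellation is exactly what must be controlled. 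Acknowledging an obstacle is not the same as overcoming it.

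The second gap you do not acknowledge, and the argument offered in its place is vacuous. You write that finiteness of $C=\rho(1+\pG_1)<\infty$ forces $\max(b)<1$ and convergence of $\prod_n\frac{1+a_n}{1-b_n}$; but $\rho$ maps into $\RR$, so $C$ is a finite real number for \emph{every} specialization, and its finiteness forces nothing. The real issue is that Schur positivity of $\tilde\rho$ constrains only the normalized sequence $c_n=C^{-1}\rho(\pG_n+\pG_{n+1})$, which is invariant under rescaling $C$, so Theorem~\ref{schur-thm} alone cannot recover $\rho(\pG_1)$: to identify $\rho$ with $\wphi_a\sqcup\wvarepsilon_b\sqcup\wpi_\gamma$ you must separately prove the compatibility $1+\rho(\pG_1)=e^{\gamma}\prod_{n}\frac{1+a_n}{1-b_n}$, including the convergence of this product and $\max(b)<1$. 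This is genuine extra content. For instance, take $c_n=\delta_{n,0}$, which is totally positive with $a=b=0$, $\gamma=0$; for any $C>1$ there is an algebra morphism $\rho:\pGamma\to\RR$ with $\rho(\pG_0+\pG_1)=C$ and $\rho(\pG_n+\pG_{n+1})=0$ for $n\geq1$, having exactly this normalized sequence, yet it is not $\pG$-positive (already $\rho(\pG_2)=1-C<0$). Ruling out such maps requires exploiting $\pG$-positivity beyond the Toeplitz minors of $(c_n)$: for example, from $\rho(\pG_n)=(-1)^n\bigl(1-C\sum_{j=0}^{n-1}(-1)^jc_j\bigr)$ one sees that positivity of all $\rho(\pG_n)$ forces the alternating partial sums of $\sum_j(-1)^jc_j$ to sandwich $1/C$, which pins down $C=e^{\gamma}\prod_n\frac{1+a_n}{1-b_n}$ once one shows these sums converge --- and that convergence analysis is where $\max(b)<1$ enters. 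No argument of this kind appears in your proposal, so as written the converse does not close.
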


\begin{remark}
The preceding result is equivalent to Theorem~\ref{y-thm1} since by \cite[Lem.~4.7]{Yel20} we have
\be
\label{GG-eq}
\textstyle
\sum_{n=0}^\infty \(\bG_n +\beta \bG_{n+1}\)z^n
  = \(1+\beta \bG_1\)H(z).
  \ee
Using this identity and \eqref{12cup-eq0}, \eqref{12cup-eq00}, and \eqref{12cup-eq}, one checks that if 
    $\rho = \wphi_\alpha \sqcup \wvarepsilon_b \sqcup \wpi_\gamma$ then
        \eqref{y-thm1-eq} holds
    with $1+\rho(\pG_1)=C<\infty$ as in \eqref{C-as-eq}.
Conversely, if these identities hold for a specialization $\rho : \pGamma\to \RR$ then we must have 
 $\rho = \wphi_\alpha \sqcup \wvarepsilon_b \sqcup \wpi_\gamma$  since the
 coefficients of the power series $   \sum_{n\geq 0} \rho(\pG_n+\pG_{n+1}) z^{n}$
  determine the values of $\rho$ on the set of algebra generators $\{\pG_n: n \in \PP\}\subset \pGamma$.
      
      The uniqueness of the parameters $a$, $b$, and $\gamma$ follows by considering the zeros and poles of 
 \eqref{y-thm1-eq}.
    Finally, we have $e^\gamma \prod_{n=1}^\infty \frac{1+a_n}{1-b_n} \in [1,\infty)$ 
    if and only if $ \prod_{n=1}^\infty \frac{1+a_n}{1-b_n}$ converges to a finite value.
\end{remark}

We wish to use Yeliussizov's results to find a similar classification all $\pGP$- and $\pGQ$-positive specializations of $\pGammaP$ and $\pGammaQ$. This requires some nontrivial lemmas.

First,
let $\lambda$ be a partition with exactly $n$ nonzero parts.
Write $\delta = (n,\dots,3,2,1)$ for the $n$-part \defn{staircase partition}
so that $\lambda+\delta = (\lambda_1+n, \lambda_2+n-1,\dots,\lambda_n+1)$ is a strict partition.
Then there exists a unique linear map 
$\bTheta : \bGamma\to\bGammaQ$ with the formula
\be\label{bTheta-eq}
\bTheta(\bG_\lambda) =\bGP_{(\lambda+\delta)/\delta} =\bGQ_{(\lambda+\delta)/\delta}.
\ee
In fact, this map is a bialgebra morphism \cite[Cor.~5.16]{LM} and 
it is evident   that
\be
\bTheta(\bG_n) = \bGQ_n\quad\text{for all }n \in \NN.
\ee
The following less obvious lemmas rely on some theorems recently proved in \cite{Mar2025}.

\begin{lemma} \label{theta-lem}
If $\mu\subseteq\lambda$ are any partitions then 
\[\bTheta(\bG_{\lambda\ss\mu})
\in
 \NN[\beta]\spanning\left\{ \bGP_\nu : \nu\text{ strict}\right\}
 \cap
  \NN[\beta]\spanning\left\{ \bGQ_\nu : \nu\text{ strict}\right\}.
\]
\end{lemma}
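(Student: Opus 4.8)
The plan is to move the skew $\bG$-function through $\bTheta$ as an $\NN[\beta]$-combination of the images of straight $\bG$-functions, and then to reduce everything to the positivity of single-bar skew $\bGP$- and $\bGQ$-functions in their respective straight bases.

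First I would record that $\bG_{\lambda\ss\mu}$ is an $\NN[\beta]$-linear combination of the straight functions $\bG_\nu$. Indeed, $\bG_{\lambda\ss\mu}$ occurs as the coefficient of $\bG_\mu$ in the left tensor factor of $\Delta(\bG_\lambda) = \sum_{\kappa\subseteq\lambda}\bG_\kappa\otimes\bG_{\lambda\ss\kappa}$, so its coefficients in the straight $\bG$-basis are comultiplicative structure constants of $\bGamma$, which lie in $\NN[\beta]$ \cite{Buch2002}. Writing $\bG_{\lambda\ss\mu} = \sum_\nu d_\nu\,\bG_\nu$ with $d_\nu\in\NN[\beta]$ and applying the linear map $\bTheta$ gives, by \eqref{bTheta-eq},
\[
\bTheta(\bG_{\lambda\ss\mu}) = \sum_\nu d_\nu\,\bGP_{(\nu+\delta)/\delta} = \sum_\nu d_\nu\,\bGQ_{(\nu+\delta)/\delta},
\]
where $\delta$ denotes the staircase matching the number of parts of $\nu$.

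It then suffices to show that each single-bar skew function $\bGP_{(\nu+\delta)/\delta}$ is a nonnegative $\NN[\beta]$-combination of straight $\bGP$-functions and that each $\bGQ_{(\nu+\delta)/\delta}$ is a nonnegative $\NN[\beta]$-combination of straight $\bGQ$-functions; a nonnegative $\NN[\beta]$-combination of such terms then lies in both $\NN[\beta]$-spans at once, hence in their intersection. This single-bar skew positivity is the substantive input I would draw from \cite{Mar2025}.

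I expect this last point to be the main obstacle. It cannot be deduced from the (already available) positivity of the double-bar skew functions $\bGP_{\lambda\ss\nu}$ and $\bGQ_{\lambda\ss\nu}$ in the straight bases, because the only relation at hand, \eqref{singlebar-eq}, writes each single-bar function as a $(-\beta)$-signed sum of double-bar functions, and those signs obstruct any direct cancellation argument. Thus the genuinely new ingredient is a sign-free single-bar skew positivity statement, which I would invoke from \cite{Mar2025} rather than attempt to reprove from \eqref{singlebar-eq}.
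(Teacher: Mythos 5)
Your proposal is correct and follows essentially the same route as the paper's proof: reduce to straight shapes using Buch's $\NN[\beta]$-expansion of $\bG_{\lambda\ss\mu}$ in the $\bG_\nu$ basis, apply $\bTheta$ via \eqref{bTheta-eq}, and then invoke \cite[Cor.~4.44]{Mar2025} for the $\NN[\beta]$-positivity of $\bGP_{(\nu+\delta)/\delta}=\bGQ_{(\nu+\delta)/\delta}$ in the straight $\bGP$- and $\bGQ$-bases. Your closing observation about why \eqref{singlebar-eq} cannot substitute for the citation is also accurate, since its $(-\beta)$-signs genuinely obstruct such a derivation.
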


\begin{proof}
As each $\bG_{\lambda\ss\mu}$ is a $\NN[\beta]$-linear combination of $\bG_\nu$'s \cite{Buch2002},
we may assume $\mu=\emptyset$.
%Define $\delta$ relative to $\lambda$ as in \eqref{bTheta-eq}.
Then it suffices to show that $\bGP_{(\lambda+\delta)/\delta}=\bGQ_{(\lambda+\delta)/\delta}$ is both a $\NN[\beta]$-linear combination of $\bGP$-functions
and  
 a $\NN[\beta]$-linear combination of $\bGQ$-functions. This is precisely \cite[Cor.~4.44]{Mar2025}.
%
% These claims are special cases of \cite[Cor.~4.42]{Mar2025},
% which asserts more strongly that $\bGP_{\nu/\kappa}$ is always \defn{$\bGP$-positive} in the desired sense
% and that $\bGQ_{\nu/\kappa}$ is similarly \defn{$\bGQ$-positive}.
\end{proof}

\begin{lemma}\label{theta-lem2}
We have $\bTheta\circ\bOmega=\bTheta$ as maps $ \bGamma\to\bGammaQ$.
\end{lemma}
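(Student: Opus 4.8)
The plan is to exploit that $\bGamma$ is generated as an $\RR$-algebra by $\{\bG_n : n \in \PP\}$, together with the observation that both $\bTheta$ and $\bTheta\circ\bOmega$ are algebra morphisms $\bGamma\to\bGammaQ$. Indeed, $\bTheta$ is an algebra morphism because it is a bialgebra morphism, and $\bOmega$ is an algebra morphism by Lemma~\ref{bOmega-lem}, so the composite $\bTheta\circ\bOmega$ is one as well. Consequently it suffices to check that $\bTheta(\bOmega(\bG_n)) = \bTheta(\bG_n)$ for every $n \in \PP$, which reduces the entire statement to a single base case.

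First I would evaluate both sides on the generator $\bG_n = \bG_{(n)}$. We have $\bTheta(\bG_n) = \bGQ_n$ on one side, while on the other side $\bOmega(\bG_{(n)}) = \bG_{(n)^\top} = \bG_{(1^n)}$, the symmetric Grothendieck function of a single column. Applying the defining formula \eqref{bTheta-eq} with $\lambda = (1^n)$ and staircase $\delta = (n,n-1,\dots,1)$ gives $\lambda+\delta = (n+1,n,\dots,2)$, so that $\bTheta(\bG_{(1^n)}) = \bGQ_{(n+1,n,\dots,2)/\delta}$. A direct inspection of shifted diagrams shows that the skew shape $\SD_{(n+1,n,\dots,2)}\setminus\SD_\delta$ consists of the single column of cells $(1,n+1),(2,n+1),\dots,(n,n+1)$, all of which lie strictly above the main diagonal. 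Thus the lemma comes down to the identity $\bGQ_{(n+1,n,\dots,2)/\delta} = \bGQ_n$; that is, the skew $K$-theoretic $Q$-function of a vertical strip of length $n$ above the diagonal must equal that of the horizontal strip of length $n$.

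The main obstacle is exactly this base case, which is an instance of the transpose (conjugation) symmetry of skew $K$-theoretic $Q$-functions supported strictly above the diagonal. The natural approach is a weight-preserving bijection between $\ShSVT_Q$ of the vertical strip and $\ShSVT_Q$ of the horizontal strip obtained by reflecting across the diagonal and flipping primed against unprimed entries: reflection turns the vertical adjacency condition $T_{ij}\cap T_{i+1,j}\subset\PP-\tfrac12$ into the horizontal condition $T_{ij}\cap T_{i,j+1}\subset\PP$, and the exchange $k\leftrightarrow k'$ preserves each weight monomial since $\lceil k\rceil = \lceil k'\rceil$. The delicate point, and the reason a naive reflection does not suffice, is that this prime exchange is not order preserving on $\tfrac12\PP$, so the bijection has to be arranged carefully (in the style of the conjugation of marked set-valued tableaux) in order to respect the $\preceq$ conditions; I would appeal to the version of this symmetry available in \cite{Mar2025}. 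Granting the base case, agreement on all generators follows, and therefore $\bTheta\circ\bOmega = \bTheta$ as algebra morphisms $\bGamma\to\bGammaQ$. As an alternative to the generator reduction, one could instead cite transpose invariance of $\bGQ_{(\lambda+\delta)/\delta}$ directly for all $\lambda$, since the shapes underlying $\bTheta(\bG_\lambda)$ and $\bTheta(\bG_{\lambda^\top})$ are transposes of one another.
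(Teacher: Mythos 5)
Your reduction is exactly the paper's: both arguments use that $\bTheta$ and $\bOmega$ are (bi)algebra morphisms to reduce the identity to the generators $\bG_n$, and both arrive at the same base case, namely that $\bTheta(\bOmega(\bG_n)) = \bGQ_{(n+1,n,\dots,2)/(n,\dots,2,1)}$, the $\bGQ$-function of a single column of $n$ boxes strictly above the diagonal, equals $\bGQ_n$. You also correctly diagnose the obstacle to the naive argument: transposing and exchanging $k \leftrightarrow k'$ box-by-box does not respect the $\preceq$ conditions (for instance the valid row filling $\{1'\},\{1\}$ would be sent to the column filling $\{1\},\{1'\}$, which violates weak increase since $1 > 1'$).

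However, this is precisely where your proposal stops being a proof. The identity $\bGQ_{(n+1,n,\dots,2)/(n,\dots,2,1)} = \bGQ_n$ is the entire content of the lemma beyond the routine generator reduction, and you do not establish it: you gesture at a ``careful arrangement'' and appeal to \cite{Mar2025}, but that reference is cited in this paper only for positivity and structure-constant results (in particular Cor.~4.44, used in Lemma~\ref{theta-lem}), and nothing here indicates it contains the transpose symmetry you need; the same objection applies to your proposed alternative of citing transpose invariance of $\bGQ_{(\lambda+\delta)/\delta}$ for general $\lambda$. The paper instead constructs the weight-preserving bijection $\ShSVT_Q((n)) \to \ShSVT_Q((n+1,\dots,3,2)/(n,\dots,2,1))$ explicitly, and the construction is short: for each $i \in \PP$, the boxes of a row tableau containing $i$ or $i' = i-\tfrac{1}{2}$ form a contiguous run whose pattern (ignoring all other entries) is one of ``$i,\, i,\dots, i$'', or ``$i',\, i, \dots, i$'', or ``$i'i,\, i, \dots, i$''; these are rewritten respectively as ``$i',\, i',\dots, i'$'', or ``$i', \dots, i',\, i$'', or ``$i', \dots, i',\, i'i$'' (reverse the run while swapping primes), and after doing this for every $i$ the row is transposed into a column. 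This per-value rewriting is exactly the ``careful arrangement'' your sketch postulates; writing it down and checking it is a weight-preserving bijection is the step missing from your proposal.
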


\begin{proof}
Since $\bTheta$ and $\bOmega$ are bialgebra morphism,
it suffices to check this identity on the generators $\bG_n$ for $\bGamma$.
Fix $n \in \PP$.
Thus, we just need to show that   
\[
\bGQ_{(n+1,\dots,4,3,2)/(n,\dots,3,2,1)} = \bTheta(\bG_{(1^n)}) =  \bTheta\circ\bOmega(\bG_n)
\quad\text{is equal to}\quad\bTheta(\bG_n) = \bGQ_n.
\]
Here is a weight-preserving bijection $\ShSVT_Q((n))\to \ShSVT_Q((n+1,\dots,4,3,2)/(n,\dots,3,2,1))$
that realizes this identity.
Choose $T \in \ShSVT_Q((n))$. For each $i \in \PP$, let $i' = i-\frac{1}{2}$
and consider the boxes of $T$ containing $i$ or $i'$ (or both).
These boxes must be adjacent and of the form
\[ \ytabc{0.6cm}{i & i & i& \cdots & i &  i }
\quord 
\ytabc{0.6cm}{i' & i & i&\cdots & i&  i }
\quord 
\ytabc{0.6cm}{i'i & i & i &\cdots &i& i }
\] 
when we ignore all entries except $i$ or $i'$. Respectively change these patterns to
\[ \ytabc{0.6cm}{i' & i' & i'& \cdots &i' & i' }
\quord 
\ytabc{0.6cm}{i' & i' & i'&\cdots & i' & i }
\quord 
\ytabc{0.6cm}{i' & i' & i' &\cdots & i' &i'i }.
\] 
After performing this operation for all $i \in \PP$, transpose the boxes of $T$ to obtain a tableau $U$
with $n$ boxes arranged in one column. The desired bijection is then $T\mapsto U$.
\end{proof}

Our final lemma in this section is a shifted version of \eqref{GG-eq}. Here, we define 
\be\label{x-bar-eq} \overline{x} = \frac{-x}{1+x}\quad\text{for any parameter $x$}.
\ee

\begin{lemma}[\cite{HIMN}]
\label{HIMN-lem}
It holds that \[
 \sum_{n=0}^\infty\( \bGQ_{n} +\beta \bGQ_{n+1}\) z^{n} 
= \(1+\beta \bG_1\)E(z+\beta )H(z)
%= (1 + \GQ_1)\prod_{n=1}^\infty \frac{1-\overline{x_i}z}{1-x_i z}
.\]
\end{lemma}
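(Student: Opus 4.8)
The plan is to prove the identity coefficientwise in $z$, i.e. to show that
\[
\bGQ_n + \beta\bGQ_{n+1} = [z^n]\,(1+\beta\bG_1)E(z+\beta)H(z)\qquad\text{for all }n\ge0,
\]
and the first step is to put the right-hand side into product form. Summing the single-box generating function gives $\bG_1=\beta^{-1}\bigl(\prod_i(1+\beta x_i)-1\bigr)$, so $1+\beta\bG_1=E(\beta)=\prod_i(1+\beta x_i)$; combined with $E(z+\beta)=\prod_i(1+(z+\beta)x_i)$ and $H(z)=\prod_i(1-x_iz)^{-1}$, the target becomes the statement that
\[
\sum_{n\ge0}\bigl(\bGQ_n+\beta\bGQ_{n+1}\bigr)z^n=\prod_{i\ge1}\frac{(1+\beta x_i)\bigl(1+(z+\beta)x_i\bigr)}{1-x_iz}.
\]
Thus everything reduces to evaluating the left-hand series as an infinite product over the variables.

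Next I would fix a combinatorial model for the left-hand side. For $n\ge1$ the coefficient $\bGQ_n+\beta\bGQ_{n+1}$ is the skew function $\bGQ_{(n+1)\ss(1)}$ by Example~\ref{sh-ss-ex}, and in all cases it equals the generating function $\sum_T\beta^{|T|-n}x^T$ over single-row shifted set-valued tableaux on $n+1$ cells whose leftmost cell is allowed to be empty (the empty case recovering $\bGQ_n$ and the nonempty case contributing $\beta\bGQ_{n+1}$). Here $z$ simply marks the $n$ mandatory cells, so $\sum_n(\bGQ_n+\beta\bGQ_{n+1})z^n$ is the weighted sum over all such rows of arbitrary length. (The left-hand side also arises by applying the bialgebra morphism $\bTheta$ of \eqref{bTheta-eq} to the known identity \eqref{GG-eq}, since $\bTheta(\bG_n)=\bGQ_n$; but this only reproduces the left-hand side and does not by itself produce the product, which is the substance of the lemma.)

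The heart of the argument is to show this weighted sum factors as a product over the value classes $i=1,2,3,\dots$ (the pair of letters $i'<i$, both contributing $x_i$). I would organize the computation as a transfer matrix whose state, read between class $i$ and class $i+1$, records whether the current cell is closed or straddles the class boundary; since the largest letter of class $i$ is $i<(i+1)'$, the smallest letter of class $i+1$, two distinct classes never share a boundary letter, so the only coupling between classes is a single straddling cell. Within a class the letter $i'$ occurs in at most one cell and is never shared across a cell boundary (the rule $T_{ij}\cap T_{i,j+1}\subset\PP$ forbids primed repeats), while $i$ may repeat across consecutive cells; tracking the resulting cell- and $\beta$-weights, together with the optional leftmost cell, should collapse the transfer matrix to the scalar factor $\tfrac{(1+\beta x_i)(1+(z+\beta)x_i)}{1-x_iz}$ per class, yielding the product.

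The main obstacle is exactly this transfer-matrix bookkeeping: correctly attributing the single $z$-weight of a cell that straddles a class boundary, and verifying that the optional leftmost cell—equivalently, the passage from $\bGQ_n$ alone to the combination $\bGQ_n+\beta\bGQ_{n+1}$—is precisely the correction that makes the boundary contributions telescope into a clean per-class factor; this is why the lemma is stated for the combination rather than for $\bGQ_n$ (which is not itself a product over the variables). As a base case and consistency check I would verify the one-variable specialization directly: in one variable every cell after the first is forced to equal $\{1\}$, so $\bGQ_n=2x^n+\beta x^{n+1}$ for $n\ge1$, and one checks that $\sum_n(\bGQ_n+\beta\bGQ_{n+1})z^n=\tfrac{(1+\beta x)(1+(z+\beta)x)}{1-xz}$, matching a single factor of the product.
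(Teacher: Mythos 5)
Your route is necessarily different from the paper's, because the paper does not prove this lemma directly at all: its proof is a one-line citation of \cite[Cor.~10.10]{HIMN}, followed by a remark that a ``more direct combinatorial argument'' exists. You are attempting exactly that direct argument. Your preparation is correct: $1+\beta\bG_1 = E(\beta) = \prod_i(1+\beta x_i)$, so the lemma is equivalent to the product formula
\[
\sum_{n\geq 0}\left(\bGQ_n+\beta\bGQ_{n+1}\right)z^n
=\prod_{i\geq 1}\frac{(1+\beta x_i)\left(1+(z+\beta)x_i\right)}{1-x_iz},
\]
the coefficient of $z^n$ is indeed $\bGQ_{(n+1)\ss(1)}$, realized by one-row shifted set-valued fillings of $n+1$ boxes whose leftmost box may be empty, and your one-variable consistency check is accurate.

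The genuine gap is that the factorization itself --- which is the entire content of the lemma --- is never established: you assert that the transfer matrix ``should collapse'' to the per-class factor and explicitly defer the bookkeeping, so what you have is a plan rather than a proof. Moreover, the collapse is not automatic: organized class-by-class with states (no box yet / last box complete / last box straddling), the transfer matrices do \emph{not} telescope by a naive common-eigenvector argument, precisely because the one-time bonus of the optional leftmost box distinguishes the initial state from all later states. A clean way to close the gap is instead to induct on the number of variables, peeling off the largest letter class $m$: in any filling, the boxes meeting $\{m',m\}$ form a suffix of the row, only the first of which may contain smaller letters, and summing over the possibilities --- that first box is either the last old box, receiving $\{m'\}$, $\{m\}$, or $\{m',m\}$ with weight $\beta x_m$, $\beta x_m$, $\beta^2x_m^2$, or else a new mandatory box with weight $zx_m$, $zx_m$, $z\beta x_m^2$, followed in either case by $k\geq0$ new boxes $\{m\}$ --- shows that every old filling, \emph{including the empty one} (where the optional/mandatory choice for the very first box contributes the same factor $z+\beta$ that straddle-plus-new-box contributes for nonempty fillings), acquires the uniform extension factor
\[
1+\frac{(z+\beta)x_m(2+\beta x_m)}{1-x_mz}=\frac{(1+\beta x_m)\left(1+(z+\beta)x_m\right)}{1-x_mz}.
\]
This is exactly the mechanism you guessed --- the optional box is what makes the boundary weights uniform --- but until this computation is actually carried out, the proposal does not prove the lemma.
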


\begin{proof}
After unpacking the notation in \cite[\S10]{HIMN}, this 
 identity follows from \cite[Cor.~10.10]{HIMN}. 
\end{proof}

The preceding lemma can also be shown by a more direct combinatorial argument.
%\eric{todo}
In any event, 
this brings us to our shifted version of Theorem~\ref{y-thm3}.

\begin{theorem}\label{main-thm2}
The $\pGP$-positive (respectively, $\pGQ$-positive) specializations of $\pGammaP$ (respectively, $\pGammaQ$) are the restrictions of
the maps  
$\rho = \wphi_\alpha  \sqcup \wpi_\gamma$
where $\gamma\in \RR_{\geq 0}$ and where $a=(a_1\geq a_2 \geq \dots \geq 0)$ is a real sequence
with 
$ \prod_{n=1}^\infty(1+a_n) \in \RR$.
For such maps  one has  
\be\label{main-thm2-eq}
\sum_{n\geq 0} \rho(\pGQ_n+\pGQ_{n+1}) z^{n} 
 = D^2 e^{2\gamma z} \prod_{n=1}^\infty \frac{1-\overline{a_n} z}{1-a_n z}
 \ee
where $D = 1+\rho(\pG_1) = e^{\gamma}\prod_{n=1}^\infty(1+a_n)$ and $D^2 = 1+\rho(\pGQ_1)$.
\end{theorem}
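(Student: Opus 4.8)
The plan is to show both that every restriction of a map $\rho=\wphi_a\sqcup\wpi_\gamma$ is $\pGP$- and $\pGQ$-positive and obeys \eqref{main-thm2-eq}, and conversely that every $\pGP$-positive (resp.\ $\pGQ$-positive) specialization is such a restriction. For the first assertion, recall that $\wphi_a$ and $\wpi_\gamma$ are themselves $\pG$-positive specializations of $\pGamma$; since the $\pG$-positive specializations form a semigroup under $\sqcup$ (Remark~\ref{semigroup-rem}), the union $\rho$ is $\pG$-positive, and the last sentence of Proposition~\ref{ppp-prop} then shows that its restrictions to $\pGammaP$ and $\pGammaQ$ are $\pGP$- and $\pGQ$-positive.

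To obtain \eqref{main-thm2-eq}, I would apply $\rho$ to the $\beta=1$ case of Lemma~\ref{HIMN-lem}, namely $\sum_{n\geq0}(\pGQ_n+\pGQ_{n+1})z^n=(1+\pG_1)E(z+1)H(z)$. Because $\Delta H(z)=H(z)\otimes H(z)$ and $\Delta E(w)=E(w)\otimes E(w)$, the series $E(z+1)H(z)$ is group-like, so $\rho(E(z+1)H(z))=\wphi_a(E(z+1)H(z))\,\wpi_\gamma(E(z+1)H(z))$ after extending $\wphi_a,\wpi_\gamma$ continuously as in \eqref{12cup-eq0}--\eqref{12cup-eq}. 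Evaluating the infinite product $E(z+1)H(z)=\prod_{n}\frac{1+x_n+x_nz}{1-x_nz}$ under $\wphi_a$ and using the factorization $1+a_n+a_nz=(1+a_n)(1-\overline{a_n}z)$ from \eqref{x-bar-eq} gives $\prod_n(1+a_n)\prod_n\frac{1-\overline{a_n}z}{1-a_nz}$, while $\wpi_\gamma$ contributes $e^{\gamma(z+1)}e^{\gamma z}=e^\gamma e^{2\gamma z}$. Since $1+\rho(\pG_1)=\rho(E(1))=e^\gamma\prod_n(1+a_n)=D$, multiplying these yields \eqref{main-thm2-eq}, and setting $z=0$ gives $D^2=1+\rho(\pGQ_1)$.

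For the converse I would begin with a $\pGQ$-positive specialization $\sigma$ of $\pGammaQ$ and set $\tau=\sigma\circ\bTheta:\pGamma\to\RR$. By Lemma~\ref{theta-lem} the bialgebra morphism $\bTheta$ sends each $\pG_{\lambda\ss\mu}$ into the $\NN$-span of the functions $\pGQ_\nu$, so $\tau$ is $\pG$-positive and Theorem~\ref{y-thm3} gives $\tau=\wphi_a\sqcup\wvarepsilon_b\sqcup\wpi_\gamma$ for unique parameters $a$, $b$, $\gamma$. The crucial input is Lemma~\ref{theta-lem2}, which forces $\tau\circ\bOmega=\sigma\circ\bTheta\circ\bOmega=\tau$, so $\sum_n\tau(\pG_n+\pG_{n+1})z^n$ may be computed two ways. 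Applying $\tau$ to \eqref{GG-eq} directly gives $Ce^{\gamma z}\prod_n\frac{1+b_nz}{1-a_nz}$ (via \eqref{12cup-eq0}, \eqref{12cup-eq00}), where $C=1+\tau(\pG_1)$. On the other hand, $\bOmega(H(z))=\bPhi(\omega H(z))=\bPhi(E(z))=\prod_n(1-\overline{x_n}z)$ by Lemma~\ref{bOmega-lem}, and the single-variable evaluations $\wphi_a(\prod_n(1-\overline{x_n}z))=\prod_n(1-\overline{a_n}z)$, $\wpi_\gamma(\prod_n(1-\overline{x_n}z))=e^{\gamma z}$, and $\wvarepsilon_b(\prod_n(1-\overline{x_n}z))=\prod_n(1-\tfrac{b_n}{1-b_n}z)^{-1}$—the last via the relation $\omega\circ\bPhi=\bPsi\circ\omega$ read off from Lemma~\ref{bOmega-lem}—show that $\sum_n\tau(\bOmega(\pG_n+\pG_{n+1}))z^n=Ce^{\gamma z}\prod_n\frac{1-\overline{a_n}z}{1-\frac{b_n}{1-b_n}z}$. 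Equating the two expressions and matching zeros and poles, as in the uniqueness discussion after Theorem~\ref{y-thm3}, forces $b_n=\tfrac{a_n}{1+a_n}=-\overline{a_n}$.

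Under this constraint $\frac{1+b_nz}{1-a_nz}=\frac{1-\overline{a_n}z}{1-a_nz}$ and $C=e^\gamma\prod_n(1+a_n)^2$, so $\sum_n\sigma(\pGQ_n+\pGQ_{n+1})z^n=Ce^{\gamma z}\prod_n\frac{1-\overline{a_n}z}{1-a_nz}$ is precisely \eqref{main-thm2-eq} for $\rho:=\wphi_a\sqcup\wpi_{\gamma/2}$, whose constant $D=e^{\gamma/2}\prod_n(1+a_n)$ satisfies $D^2=C$. As the coefficients of this series determine a specialization on the algebra generators $\pGQ_n$ of $\pGammaQ$, we conclude $\sigma=\rho|_{\pGammaQ}$. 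For a $\pGP$-positive specialization $\sigma$ of $\pGammaP$ the same construction works, now using the $\pGP$-half of Lemma~\ref{theta-lem} to see $\sigma\circ\bTheta$ is $\pG$-positive; one again obtains $\rho$ agreeing with $\sigma$ on every $\pGQ_n$. To upgrade this to the algebra generators $\pGP_n$ of $\pGammaP$, I would use $(1+\pGP_1)^2=1+\pGQ_1$ together with positivity to fix $\sigma(\pGP_1)=D-1=\rho(\pGP_1)$, and then induct using $\pGQ_n=2\pGP_n+\pGP_{n+1}$ from \eqref{GQ-GP-n-eq}. I expect the main obstacle to be the two-way computation of the generating function of $\tau\circ\bOmega$—particularly the $\wvarepsilon_b$ evaluation and the zero/pole matching that extracts $b_n=-\overline{a_n}$—since the positivity reductions and the forward direction are comparatively routine given the cited results.
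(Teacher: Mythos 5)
Your proposal is correct and follows essentially the same route as the paper's proof: positivity of restrictions via Remark~\ref{semigroup-rem} and Proposition~\ref{ppp-prop}, the generating-function computation via Lemma~\ref{HIMN-lem}, and, for the converse, composing with $\bTheta$ and exploiting $\bTheta\circ\bOmega=\bTheta$ (Lemmas~\ref{theta-lem} and \ref{theta-lem2}) together with Theorem~\ref{y-thm3} to force $b_n=-\overline{a_n}$, then recovering $\rho$ on the generators $\pGQ_n$ (resp.\ $\pGP_n$, using $(1+\pGP_1)^2=1+\pGQ_1$ and positivity). The only cosmetic difference is that where you match zeros and poles of the two generating functions by hand, the paper instead decomposes $\tau\circ\bOmega$ as $(\wphi_a\circ\bOmega)\sqcup(\wvarepsilon_b\circ\bOmega)\sqcup(\wpi_\gamma\circ\bOmega)=\wvarepsilon_{a'}\sqcup\wphi_{b'}\sqcup\wpi_\gamma$ and invokes the uniqueness clause of Theorem~\ref{y-thm3}---which is itself proved by the same zero/pole argument.
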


\begin{proof}
In view of Proposition~\ref{ppp-prop}, any such map $\rho = \wphi_\alpha  \sqcup \wpi_\gamma$ restricts to $\pGP$- and $\pGQ$-positive specializations of $\pGammaP$ of $\pGammaQ$, and we have $ \rho (\pGP_1)=\rho (\pG_1)= -1+ e^{\gamma} \prod_{n=1}^\infty (1+a_n)$ by Theorem~\ref{y-thm3}.
Then by
using Lemma~\ref{HIMN-lem} with \eqref{12cup-eq0}, \eqref{12cup-eq00}, and \eqref{12cup-eq}, one can check that \eqref{main-thm2-eq} holds.

Now suppose
$\rho$ is a $\pGP$-positive specialization of $\pGammaP$ or a $\pGQ$-positive specialization of $\pGammaQ$. Let $\Theta = \Theta^{(1)}$ and $\Omega=\Omega^{(1)}$.
Then  Lemmas~\ref{theta-lem} and \ref{theta-lem2}  imply that
 $\rho \circ \Theta = \rho\circ\Theta\circ \Omega$ is a $\mG$-positive specialization of $\mGamma$,
 so  
 $\rho \circ \Theta =  \wphi_a \sqcup \wvarepsilon_b \sqcup \wpi_\gamma$ for unique parameters as in Theorem~\ref{y-thm3}.
 
 As $\Omega$ is a bialgebra morphism,  we have
 \[
 (\wphi_a \sqcup \wvarepsilon_b \sqcup \wpi_\gamma) \circ \Omega  =
( \wphi_a\circ \Omega) \sqcup (\wvarepsilon_b\circ \Omega) \sqcup (\wpi_\gamma\circ \Omega).
\]
The formulas \eqref{limit-eq}, \eqref{pi-omega-eq}, and $\Omega = \Phi^{(1)}\circ \omega = \omega \circ \Psi^{(1)}$ in Lemma~\ref{bOmega-lem}
imply that 
\[ \wphi_a\circ \Omega = \wvarepsilon_{a'},
\quad 
 \wvarepsilon_b\circ \Omega = \wphi_{b'},
 \quand
 \wpi_\gamma \circ \Omega = \wpi_\gamma
 \]
 for $a' = (\frac{a_1}{1+a_1}, \frac{a_2}{1+a_2},\dots)$ and $b'=(\frac{b_1}{1-b_1} , \frac{b_2}{1-b_2},\dots)$.
Since  $\wphi_a \sqcup \wvarepsilon_b \sqcup \wpi_\gamma = (\wphi_a \sqcup \wvarepsilon_b \sqcup \wpi_\gamma) \circ \Omega $,
the uniqueness of our parameters means 
that we must have $a=b'$ and $b=a'$. 

Thus, Theorems~\ref{y-thm1} and \ref{y-thm3} imply that
\be\label{mer-eq}
  \sum_{n\geq 0} \rho(\pGQ_n+\pGQ_{n+1}) z^{n} 
  =  \sum_{n\geq 0} \rho \circ \Theta(\pG_n + \pG_{n+1}) z^n
 = (1+\rho(\pGQ_1)) e^{\gamma z} \prod_{n=1}^\infty \frac{1-\overline{a_n} z}{1-a_n z}
 \ee
where we have
 \[
 1+\rho(\pGQ_1) = 1+\rho\circ \Theta(\bG_1) =   e^\gamma \prod_{n=1}^\infty \frac{1+a_n}{1+\overline{a_n}}
=e^\gamma \prod_{n=1}^\infty (1+a_n)^2,
\]
so the infinite product $ \prod_{n=1}^\infty (1+a_n)$ must converge.
Again using Lemma~\ref{HIMN-lem} with \eqref{12cup-eq0}, \eqref{12cup-eq00}, and \eqref{12cup-eq}, 
one checks that the right side of \eqref{mer-eq} is equal to
$
 \sum_{n\geq 0}  (\wphi_a  \sqcup \wpi_{\gamma/2})(\pGQ_n+\pGQ_{n+1}) z^{n}.
$
%\eric{todo}
Thus   $\rho $ must coincide with the restriction of $\wphi_\alpha  \sqcup \wpi_{\gamma/2}$
since
the coefficients of 
\[ \sum_{n\geq 0} \rho(\pGQ_n+\pGQ_{n+1}) z^{n} = (1 + \rho(\pGP_1))^2 + \sum_{n\geq 1} \rho(2\pGP_n+3\pGP_{n+1}+\pGP_{n+2}) z^{n} 
\]
determine the values of $\rho$ on the relevant set of algebra generators $\{\pGP_n\} \subset \pGammaP$ or $\{\pGQ_n\} \subset \pGammaQ$
(as $1 + \rho(\pGP_1)$ is positive and $\pGP_2= (\pGP_1)^2$).
Replacing $\gamma$ by $2\gamma$ gives the desired form of $\rho$.
\end{proof}

\begin{proof}[Proof of Theorem~\ref{main-thm}]
In the proof of Theorem~\ref{main-thm2}, we showed that the 
$\pGP$-positive specializations of $\pGammaP$ and the $\pGQ$-positive specializations of $\pGammaQ$
are obtained by restricting the same $\pG$-positive specializations $\rho$ of $\pGamma$, 
and that \eqref{main-thm2-eq} holds for each such specialization.
We also observed in our argument that if \eqref{main-thm2-eq} holds then $\rho$ coincides with the restriction of $\wphi_a  \sqcup \wpi_\gamma$ for appropriate parameters $a$ and $\gamma$, so $\rho$ restricts to 
$\pGP$- and $\pGQ$-positive specializations by Proposition~\ref{ppp-prop} and Theorem~\ref{y-thm3}.
This confirms the equivalence of   (a), (b), and (c) in  Theorem~\ref{main-thm}.
%\eric{todo: clarify equivalence of (c)}
\end{proof}

\subsection{Single-variable formulas}
 
 The results in this section, which are needed to classify the $\mGP$- and $\mGQ$-positive specializations of $\mGammaP$ and $\mGammaQ$,
 are $\bGP$- and $\bGQ$-analogues of some $\bG$-function formulas in \cite{Yel19}. 
 
 Let $\mu\subseteq \lambda$ be strict partitions such
that the shifted skew shape $\SD_{\lambda/\mu} := \SD_\lambda\setminus\SD_\mu$ is a \defn{border strip} in the sense that $(i,j) \in \SD_{\lambda/\mu}$ implies that $(i+1,j+1) \notin \SD_{\lambda/\mu}$. Define 
\[
\corners(\lambda,\mu) = \{ (i,j) \in \SD_\mu : (i+1,j)\notin \SD_\mu,\ (i,j+1) \notin \SD_\mu,\ (i+1,j+1) \notin \SD_\lambda\}.
\]
Then let 
\[
\ba
\free_Q(\lambda,\mu) &= \{ (i,j) \in \SD_{\lambda/\mu} : (i+1,j),(i,j-1) \notin \SD_{\lambda/\mu} \}, 
\\
 \free_P(\lambda,\mu) &= \{ (i,j) \in \free_Q(\lambda,\mu) : i \neq j\},
  \ea
 \]
along with
\[
\ba
  \gaps_Q(\lambda,\mu) &= \{ (i,j) \in \corners(\lambda,\mu) : (i+1,j),(i,j+1)\in \SD_{\lambda/\mu} \}, \\
   \gaps_P(\lambda,\mu) &=\gaps_Q(\lambda,\mu) \sqcup \{ (i,j) \in \corners(\lambda,\mu) : i=j\text{ and }(i+1,j)\in \SD_{\lambda/\mu} \}.
  \ea
 \]
 Finally let
 \[
\ba
  \exts_Q(\lambda,\mu) &= \{ (i,j) \in \corners(\lambda,\mu) : (i+1,j)\in \SD_{\lambda/\mu}\text{ or }(i,j+1)\in \SD_{\lambda/\mu} \}- \gaps_Q(\lambda,\mu), \\
    \exts_P(\lambda,\mu) &= \{ (i,j) \in \corners(\lambda,\mu) : (i+1,j)\in \SD_{\lambda/\mu}\text{ or }(i,j+1)\in \SD_{\lambda/\mu}\text{ or }i=j \}- \gaps_P(\lambda,\mu).
  \ea
 \]
 When $f \in \RR\llbracket \bfx \rrbracket$ is a power series and $a_1,a_2,\dots,a_k$ is a finite list of parameters,
 we let
 \be f(a_1,a_2,\dots,a_k) = f(a_1,a_2,\dots,a_k,0,0,0,\dots)\ee
denote the result of substituting $x_i\mapsto a_i$ for $1\leq i \leq k$ and $x_i \mapsto 0$ for $i>k$.
\begin{proposition}\label{1var-prop}
Let $\lambda $ and $\mu$ be strict partitions.
If $\mu\not\subseteq \lambda$ or if $\SD_{\lambda/\mu}$ is not a border strip, then 
the single-variable power series $\bGP_{\lambda\doublebar\mu}(x)=\bGQ_{\lambda\doublebar\mu}(x)=0$ are both zero.
Otherwise, we have 
\[\ba
\bGP_{\lambda\doublebar\mu}(x)  &= 
2^{a_P}  (2 +\beta x)^{b_P} (1 +\beta x)^{c_P} x^{|\lambda|-|\mu|} \quand
\\
\bGQ_{\lambda\doublebar\mu}(x)  &= 
2^{a_Q} (2 +\beta x)^{b_Q} (1 +\beta x)^{c_Q} x^{|\lambda|-|\mu|}
\ea
\]
where for either type $K \in \{P,Q\}$ we define 
\[ \ba
a_K=a_K(\lambda,\mu) &= |\gaps_K(\lambda,\mu)|,
\\
b_K=b_K(\lambda,\mu)   &= |\free_K(\lambda,\mu)|-|\gaps_K(\lambda,\mu)|,
\\
c_K=c_K(\lambda,\mu)   &= 2|\corners(\lambda,\mu)| - |\gaps_K(\lambda,\mu)| - |\exts_K(\lambda,\mu)|
.
\ea
\]
\end{proposition}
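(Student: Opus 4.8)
The plan is to evaluate both series directly from their tableau definitions after specializing to a single variable, mirroring the $\bG$-analogue in \cite{Yel19}. Setting $x_1=x$ and $x_i=0$ for $i\ge 2$ in $\bGP_{\lambda\doublebar\mu}=\sum_T\beta^{|T|-|\lambda/\mu|}x^T$ kills every tableau having an entry $a$ with $\lceil a\rceil>1$; since $\lceil a\rceil=1$ only for $a\in\{1,1'\}$ where $1'=\tfrac12$, the surviving tableaux are exactly those of $\ShSVT_P(\lambda\doublebar\mu)$ (resp. $\ShSVT_Q(\lambda\doublebar\mu)$) whose boxes take values in $\{1',1\}$. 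With $N=|\lambda/\mu|$ this gives $\bGP_{\lambda\doublebar\mu}(x)=x^N\sum_T(\beta x)^{|T|-N}$, and likewise for $Q$; when $\mu\not\subseteq\lambda$ the index set is empty and both series vanish. So everything reduces to a finite weighted count of $\{1',1\}$-fillings.

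First I would record the local rules imposed once all entries lie in $\{1',1\}$. A short case check using $T_{ij}\cap T_{i,j+1}\subseteq\PP$ and $T_{ij}\cap T_{i+1,j}\subseteq\PP-\tfrac12$ shows that a box with a nonempty box to its left must equal $\{1\}$, while a box with a nonempty box below it must equal $\{1'\}$. Hence no required cell can have both a left and a lower neighbour. By convexity of shifted skew shapes a cell $(i,j+1)$ of $\SD_{\lambda/\mu}$ has both neighbours present exactly when $(i,j),(i+1,j+1)\in\SD_{\lambda/\mu}$, i.e. exactly when $\SD_{\lambda/\mu}$ fails to be a border strip. This settles the vanishing claim: if $\SD_{\lambda/\mu}$ is not a border strip there is no admissible filling and both series are $0$.

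Now suppose $\SD_{\lambda/\mu}$ is a border strip. The two forcing rules pin down every box possessing a left or lower neighbour (each is a singleton contributing $x$), so the remaining freedom lives at the cells of $\SD_{\lambda/\mu}$ with no left or lower neighbour inside the strip, namely $\free_Q(\lambda,\mu)$, and at the boundary cells of $\mu$ lying in $\SD_\lambda\setminus\inn(\SD_\mu)$, which the defining conditions identify with $\corners(\lambda,\mu)$ and which may be left empty. An off-diagonal free cell contributes $1+1+\beta x=2+\beta x$ from its three choices $\{1'\},\{1\},\{1',1\}$. The heart of the computation is the interaction of each inner corner with the border-strip cells adjacent to it: because the corner may be empty or filled, its status toggles an adjacent required cell between free and forced, and summing the two alternatives converts a local $2+\beta x$ into a bare $2$ when the corner is a gap (both neighbours present, set $\gaps_K$) and into a $1+\beta x$ otherwise (set $\exts_K$). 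I would decompose the constraint graph into the connected clusters formed by the corners and their neighbours, evaluate each cluster's generating function, and check that the product over clusters and isolated free cells equals $2^{a_K}(2+\beta x)^{b_K}(1+\beta x)^{c_K}x^N$, with the exponents $a_K=|\gaps_K|$, $b_K=|\free_K|-|\gaps_K|$, $c_K=2|\corners|-|\gaps_K|-|\exts_K|$ being precisely the net record of how each corner removes one free cell and emits $(1+\beta x)$ factors.

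The step I expect to be the main obstacle is this last cluster computation. Since the $\doublebar$ convention lets the boundary cells of $\mu$ be empty, the choices at a corner and at its neighbouring required cells are coupled rather than independent, so one must verify that each cluster's joint generating function factors in the stated way and does not depend on how the strip continues beyond the cluster. The type $P$ versus $Q$ distinction enters exactly here through the diagonal cells, where $1'$ is forbidden for $P$; this alters the local factor at a diagonal corner or free cell and is what forces the separate definitions of $\free_P,\gaps_P,\exts_P$. I would therefore organize the final case analysis by the local shape at each corner (gap, each one-sided adjacency, and the diagonal cases), confirming the claimed factor in each. The individual checks are routine once the coupling has been correctly isolated, but enumerating the cases completely, and keeping the $P$/$Q$ bookkeeping consistent, is where the real work lies.
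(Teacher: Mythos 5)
Your first half—specializing to one variable so that only $\{1',1\}$-fillings survive, the two forcing rules (left neighbour nonempty forces $\{1\}$, lower neighbour nonempty forces $\{1'\}$), and the vanishing argument when $\SD_{\lambda/\mu}$ is not a border strip—is exactly the paper's argument. Where you genuinely diverge is the counting step. You propose a local analysis: factor the sum cluster by cluster, one cluster per corner of $\mu$ together with the adjacent strip cell it can force. The paper instead argues globally: for each fixed choice of nonempty optional boxes $A\subseteq\gaps_Q$, $B\subseteq\exts_Q$, $C\subseteq\diff_Q:=\corners(\lambda,\mu)-\gaps_Q-\exts_Q$, the set of all filled boxes is itself a border strip, hence a disjoint union of connected ribbons; every non-first box of a ribbon is forced, every first box has the three choices $\{1'\},\{1\},\{1',1\}$ (contributing $2+\beta x$), and the number of ribbons is exactly $|\free_Q|-|A|+|C|$. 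Summing over $A,B,C$ and applying the binomial theorem gives the product formula in one stroke. This global bookkeeping is precisely what dissolves the issue you flag as ``the main obstacle'': no cluster-independence verification is ever needed, because the only statistic that matters is the ribbon count, and adding a gap corner merges two ribbons ($-1$), an ext corner extends one ($0$), and an isolated corner creates one ($+1$). Your local factors are correct as far as you state them—a free off-diagonal cell gives $2+\beta x$, a gap corner turns that into $2(1+\beta x)$, an ext corner emits $(1+\beta x)$, an isolated corner gives $(1+\beta x)^2$—so your route would also succeed, but only after the case analysis (including the diagonal adjustments for $P$) is actually executed, which is where all the content of the proof lies. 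One statement in your write-up needs repair either way: the optional cells of the shape $\lambda\doublebar\mu$ form the set $\SD_\mu\setminus\corners(\SD_\mu)$ of all outer corners of $\mu$, which is in general strictly larger than $\corners(\lambda,\mu)$; the outer corners with $(i+1,j+1)\in\SD_\lambda$ are not ``identified with'' $\corners(\lambda,\mu)$ by the definitions, but must be shown to remain empty in every admissible filling (if such a corner were filled, the cell to its right would have both a nonempty left neighbour and a nonempty lower neighbour, contradicting your own forcing rules).
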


Before giving a proof of this result, we note a corollary and present some examples.

\begin{corollary}
If $\lambda $ is a strict partition with more than one part then  $\bGP_{\lambda}(x)=\bGQ_{\lambda}(x)=0$.
Otherwise, if $n \in \PP$ then  
$\bGP_{n}(x)  =   x^{n}$
and
$
\bGQ_{n}(x)  = 
(2 +\beta x) x^{n}$.
\end{corollary}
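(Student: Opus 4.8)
The plan is to derive the Corollary directly from Proposition~\ref{1var-prop} by specializing to one-row and one-column shapes. The statement concerns $\bGP_\lambda(x) = \bGP_{\lambda\doublebar\emptyset}(x)$ and $\bGQ_\lambda(x)$, so I take $\mu = \emptyset$ throughout, which means $\SD_\mu = \varnothing$ and hence $\corners(\lambda,\emptyset) = \gaps_K(\lambda,\emptyset) = \exts_K(\lambda,\emptyset) = \varnothing$ for both $K \in \{P,Q\}$. Consequently the exponents simplify drastically: $a_K = 0$, $c_K = 0$, and $b_K = |\free_K(\lambda,\emptyset)|$. The single-variable values therefore reduce to $2^{b_K}(2+\beta x)^{b_K} x^{|\lambda|}$ wait---let me recompute---since $a_K = c_K = 0$ we get $\bGP_\lambda(x) = (2+\beta x)^{b_P} x^{|\lambda|}$ and $\bGQ_\lambda(x) = (2+\beta x)^{b_Q} x^{|\lambda|}$, with everything now controlled by the free-box counts.

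The key combinatorial step is to compute $\free_P$ and $\free_Q$ for $\lambda = (n)$ a single-part strict partition. First I would verify the border-strip hypothesis: if $\lambda$ has more than one part, then $\SD_\lambda$ contains a $2\times 2$-type configuration witnessing $(i,j), (i+1,j+1) \in \SD_\lambda$ (concretely the boxes $(1,1)$ and $(2,2)$ are both present once $\lambda_2 \geq 1$), so $\SD_{\lambda/\emptyset} = \SD_\lambda$ is not a border strip and Proposition~\ref{1var-prop} gives $\bGP_\lambda(x) = \bGQ_\lambda(x) = 0$, establishing the first claim. For $\lambda = (n)$ the shifted diagram is the single row $\SD_{(n)} = \{(1,1),(1,2),\dots,(1,n)\}$, which is trivially a border strip. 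Here $\free_Q((n),\emptyset) = \{(1,j) : (2,j),(1,j-1)\notin \SD_{(n)}\}$; since row $2$ is entirely empty, the condition $(2,j)\notin\SD$ is automatic, so the only constraint is $(1,j-1)\notin\SD_{(n)}$, which holds exactly for the leftmost box $j=1$. Thus $\free_Q((n),\emptyset) = \{(1,1)\}$ has size $1$, giving $b_Q = 1$. For $\free_P$ we further require $i \neq j$; but the single free box is $(1,1)$ with $i=j=1$, so $\free_P((n),\emptyset) = \varnothing$ and $b_P = 0$.

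Substituting these counts yields $\bGP_{(n)}(x) = (2+\beta x)^{0} x^{n} = x^n$ and $\bGQ_{(n)}(x) = (2+\beta x)^{1} x^{n} = (2+\beta x)x^n$, which is exactly the claimed formula with $\bGP_n = \bGP_{(n)}$ and $\bGQ_n = \bGQ_{(n)}$. I do not anticipate a genuine obstacle here; the proof is a direct specialization. The only point demanding mild care is the bookkeeping in identifying the unique free box and confirming that the $i\neq j$ condition distinguishing $\free_P$ from $\free_Q$ is precisely what forces the discrepancy between the $P$- and $Q$-formulas---this is the conceptual heart of why $\bGP_n$ loses the factor $(2+\beta x)$ that $\bGQ_n$ retains, consistent with the relation $\bGQ_n = 2\bGP_n + \beta\bGP_{n+1}$ from \eqref{GQ-GP-n-eq}, which offers a useful independent sanity check since $2x^n + \beta x^{n+1} = (2+\beta x)x^n$.
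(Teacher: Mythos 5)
Your proof is correct and is essentially the paper's intended argument: the corollary is stated as an immediate specialization of Proposition~\ref{1var-prop} to $\mu=\emptyset$, where $\corners(\lambda,\emptyset)=\gaps_K(\lambda,\emptyset)=\exts_K(\lambda,\emptyset)=\varnothing$ forces $a_K=c_K=0$, the multi-row case fails the border-strip condition via the boxes $(1,1)$ and $(2,2)$, and the single row $(n)$ has unique free box $(1,1)$, which lies on the diagonal and is thus excluded from $\free_P$, giving $b_P=0$ and $b_Q=1$. Your mid-paragraph self-correction lands on the right formula, and the sanity check against \eqref{GQ-GP-n-eq} is a nice confirmation.
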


 \begin{example}\label{1var-ex1}
 Suppose $\mu = (2)$ and $\lambda=(5)$. Then 
  \[
 \SD_{\lambda/\mu} = 
 \ytabc{.4cm}{  *(lightgray) & *(lightgray) & \ & \ & \
 }
\]
so we have 
 \[
 \ba
 \SD_{\lambda/\mu} &= \{(1,3),(1,4),(1,5)\}, \\  \free_P(\lambda,\mu) = \free_Q(\lambda,\mu)&=\{(1,3)\},\\
\corners(\lambda,\mu)=\exts_P(\lambda,\mu)=\exts_Q(\lambda,\mu) &= \{(1,2)\}, \\
\gaps_P(\lambda,\mu) = \gaps_Q(\lambda,\mu)&=\varnothing.
\ea
\]
along with $a_P=a_Q = 0$, $b_P=b_Q = 1$, $c_P=c_P= 1$, and $|\lambda/\mu|=3$. Hence 
\[ \bGP_{\lambda\doublebar\mu}(x)  = 
\bGQ_{\lambda\doublebar\mu}(x) = (2 +\beta x) (1 +\beta x) x^3.\]
%This polynomial is nonnegative when $x \in [0,1]\sqcup [2,\infty)$.
   \end{example}

    \begin{example}\label{1var-ex2}
 Suppose $\mu = (4,2)$ and $\lambda=(4,3)$. Then 
 \[
 \SD_{\lambda/\mu} = 
 \ytabc{.4cm}{
*(lightgray)  &  *(lightgray)  &  *(lightgray) &  *(lightgray)  \\
 \none &  *(lightgray)  &  *(lightgray)  & \ 
 }
\]
 so we have 
 \[
 \ba
 \SD_{\lambda/\mu} = \free_P(\lambda,\mu) = \free_Q(\lambda,\mu)&=\{(2,4)\},\\
\corners(\lambda,\mu)=\exts_P(\lambda,\mu)=\exts_Q(\lambda,\mu) &= \{(1,4),(2,3)\}, \\
\gaps_P(\lambda,\mu) = \gaps_Q(\lambda,\mu)&=\varnothing.
\ea
\]
so $a_P=a_Q = 0$, $b_P=b_Q = 1$, $c_P=c_Q= 2$, and $|\lambda/\mu|=1$. Hence 
\[ \bGP_{\lambda\doublebar\mu}(x)  = 
\bGQ_{\lambda\doublebar\mu}(x) = (2 +\beta x) (1 +\beta x)^2 x.\]
%This polynomial is nonnegative when $x \in [0,2]$.
   \end{example}

\begin{proof}[Proof of Proposition~\ref{1var-prop}]
We have $\bGQ_{\lambda\doublebar\mu}(x_1) = \sum_{T} \beta^{|T|-|\lambda/\mu|} x^T$
where $T$ ranges over all set-valued shifted tableaux of shape $ \lambda\doublebar\mu$ whose entries are all subsets of $\{1',1\}$
where $1'=\frac{1}{2}$.

If $\SD_{\lambda/\mu}$ is not a border strip then there are no such $T$.
Assume $\SD_{\lambda/\mu}$ is a border strip and $T$ is one of the desired tableaux.
The nonempty boxes of $T$ consist of $\SD_{\lambda/\mu}$ plus arbitrary subsets  
\[A\subseteq \gaps_Q(\lambda,\mu), \ \ B\subseteq \exts_Q(\lambda,\mu),
\ \ \text{and}\ \ C\subseteq \diff_Q(\lambda,\mu) := \corners(\lambda,\mu) - \gaps_Q(\lambda,\mu)-\exts_Q(\lambda,\mu).\]
Together, this set of boxes is itself a border strip that is a union of zero or more connected ribbons.

Define the first box of a connected ribbon to be the one with smallest column index and largest row index.
For a fixed choice of $A$, $B$, and $C$, the desired tableaux $T$ are constructed as follows: every box that is not the first in its ribbon must contain just $1'$ or just $1$, and the choice is uniquely determined,
while the first boxes in each connected ribbon may independently contain $1'$, $1$, or both $1'$ and $1$.
For all such $T$, the number of filled boxes is $|\lambda/\mu|+|A|+|B|+|C|$ while the number of connected ribbons is $ |\free_Q(\lambda,\mu)| - |A|+|C|$. We deduce
from
these observations that
\[ \ba\bGQ_{\lambda\doublebar\mu}(x) &= \sum_{\substack{A \subseteq \gaps_Q(\lambda,\mu) \\ B\subseteq \exts_Q(\lambda,\mu) \\ C\subseteq \diff_Q(\lambda,\mu)}}
\beta^{|A|+|B|+|C|} x^{|\lambda/\mu|+|A|+|B|+|C|} (2+\beta x)^{|\free_Q(\lambda,\mu)| - |A|+|C|}
\\
&=(2+\beta x)^{b_Q}
 x^{|\lambda/\mu|} \sum_{
 A % \subseteq \gaps_Q(\lambda,\mu)
 } (\beta x)^{|A|}(2+\beta x)^{|\gaps_Q(\lambda,\mu)| - |A|}
 \sum_{
 B%\subseteq \exts_Q(\lambda,\mu)
 }
(\beta x)^{|B|} 
 \sum_{
 C%\subseteq \diff_Q(\lambda,\mu)
 }
(2 \beta x+\beta^2 x^2)^{|C|} 
.\ea
\] Via the binomial theorem, this expression becomes
\[ (2+\beta x)^{b_Q}  x^{|\lambda/\mu|} (2+2\beta x)^{|\gaps_Q(\lambda,\mu)|} (1+\beta x)^{|\exts_Q(\lambda,\mu)|+2|\diff_Q(\lambda,\mu)|}
= 2^{a_Q}  (2+\beta x)^{b_Q} (1+\beta x)^{c_Q} x^{|\lambda/\mu|} 
\]
as desired. The argument to derive the formula for $\bGP_{\lambda\doublebar\mu}(x)$ is similar, 
with a few adjustments to incorporate the extra requirement that $1'\notin T_{ij}$ if $i=j$.
\end{proof}

Given a strict partition $\lambda = (\lambda_1>\lambda_2>\dots\geq 0)$, let $\tilde\lambda = (\lambda_2>\lambda_3>\dots \geq 0)$.

\begin{corollary}\label{1var-cor}
If $\lambda $ and $\mu$ are strict partitions then 
\[\bGP_{\lambda\doublebar\mu}(-\tfrac{1}{\beta})=\bGQ_{\lambda\doublebar\mu}(-\tfrac{1}{\beta})=\begin{cases} (-\tfrac{1}{\beta})^{\lambda_1}&\text{if }\mu = \tilde\lambda \\ 0 &\text{otherwise}.\end{cases}\]
\end{corollary}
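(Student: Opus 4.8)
The plan is to evaluate the single-variable formulas of Proposition~\ref{1var-prop} at $x=-\tfrac1\beta$, where $\beta x=-1$, so that $2+\beta x = 1$ and $1+\beta x = 0$. If $\mu\not\subseteq\lambda$ or $\SD_{\lambda/\mu}$ is not a border strip, then both power series vanish identically; and since $\tilde\lambda\subseteq\lambda$ with $\SD_{\lambda/\tilde\lambda}$ always a border strip, this case forces $\mu\neq\tilde\lambda$, matching the claimed value $0$. So I would reduce to the case $\mu\subseteq\lambda$ with $\SD_{\lambda/\mu}$ a border strip, where for $K\in\{P,Q\}$ the formula gives
\[
\bGP_{\lambda\doublebar\mu}(-\tfrac1\beta)=\bGQ_{\lambda\doublebar\mu}(-\tfrac1\beta)=2^{a_K}\,1^{b_K}\,0^{c_K}\,(-\tfrac1\beta)^{|\lambda|-|\mu|},
\]
which vanishes exactly when $c_K>0$ and otherwise equals $2^{a_K}(-\tfrac1\beta)^{|\lambda|-|\mu|}$.

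Next I would control $c_K$. Since $\gaps_K(\lambda,\mu)$ and $\exts_K(\lambda,\mu)$ are disjoint subsets of $\corners(\lambda,\mu)$, we have $|\gaps_K|+|\exts_K|\le|\corners|$, so $c_K=2|\corners|-|\gaps_K|-|\exts_K|\ge|\corners|\ge 0$, with $c_K=0$ if and only if $\corners(\lambda,\mu)=\varnothing$; in that case also $a_K=|\gaps_K|=0$, leaving the value $(-\tfrac1\beta)^{|\lambda|-|\mu|}$. Thus the corollary reduces to the purely combinatorial claim that, for strict $\mu\subseteq\lambda$ with $\SD_{\lambda/\mu}$ a border strip, one has $\corners(\lambda,\mu)=\varnothing$ if and only if $\mu=\tilde\lambda$, together with the identity $|\lambda|-|\tilde\lambda|=\lambda_1$.

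To prove this equivalence — which I expect to be the main obstacle — I would pass to the diagonal profile $m_\nu(d)=\#\{i:\nu_i>d\}$ of a strict partition $\nu$, observing that strictness is equivalent to $m_\nu$ being weakly decreasing with all steps in $\{0,1\}$, that $\SD_{\lambda/\mu}$ is a border strip exactly when $g:=m_\lambda-m_\mu$ is $\{0,1\}$-valued, and that $\mu=\tilde\lambda$ is equivalent to $g(d)=1$ for $0\le d<\lambda_1$ and $g(d)=0$ otherwise. In these terms a cell on diagonal $d$ lies in $\corners(\lambda,\mu)$ precisely when $m_\mu$ is flat from $d-1$ to $d$, drops from $d$ to $d+1$, and $g(d)=0$. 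The forward direction is immediate: if $\mu=\tilde\lambda$ then every diagonal carrying a box of $\mu$ has $g=1$, while the remaining diagonals have $m_\mu\equiv 0$, so no diagonal meets all three conditions. For the converse I would take the largest $d_0<\lambda_1$ with $g(d_0)=0$ (which exists once $\mu\neq\tilde\lambda$) and verify that $m_\mu$ must drop from $d_0$ to $d_0+1$; letting $\{e,\dots,f\}$ be the maximal run of descent-diagonals of $m_\mu$ containing $d_0$, the inequality $m_\lambda(d)-m_\lambda(d+1)\le 1$ (i.e.\ strictness of $\lambda$) forces $g(d)\le g(d+1)$ at each descent, so $g$ is weakly increasing across the run and hence $g(e)=0$. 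The bottom $e$ of the run is then an outer corner of $\mu$ with $g(e)=0$, yielding an element of $\corners(\lambda,\mu)$ and completing the contrapositive.
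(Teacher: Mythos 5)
Your proposal is correct and follows essentially the same route as the paper: evaluate the formulas of Proposition~\ref{1var-prop} at $x=-\tfrac{1}{\beta}$, use the disjointness of $\gaps_K(\lambda,\mu)$ and $\exts_K(\lambda,\mu)$ inside $\corners(\lambda,\mu)$ to see that the value vanishes unless $\corners(\lambda,\mu)=\varnothing$ (in which case $a_K=c_K=0$), and then reduce to the fact that for a border strip $\SD_{\lambda/\mu}$ one has $\corners(\lambda,\mu)=\varnothing$ if and only if $\mu=\tilde\lambda$. The only difference is that the paper simply asserts this last combinatorial equivalence, whereas you prove it via the diagonal-profile functions $m_\nu(d)$ and the monotonicity of $g=m_\lambda-m_\mu$ along descent runs of $m_\mu$ --- an argument that is correct (including the boundary case $e=0$, where the flatness condition is vacuous) and supplies a detail the paper leaves to the reader.
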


\begin{proof}
Proposition~\ref{1var-prop}
implies that
\[\bGP_{\lambda\doublebar\mu}(-\tfrac{1}{\beta})=\bGQ_{\lambda\doublebar\mu}(-\tfrac{1}{\beta})=0\] unless $\mu\subseteq\lambda$ and $\SD_{\lambda/\mu}$ is a border strip,
in which case  
\[\bGP_{\lambda\doublebar\mu}(-\tfrac{1}{\beta}) = 2^{a_P} \cdot 0^{c_P} \cdot (-\beta)^{|\mu|-|\lambda|}\quand\bGQ_{\lambda\doublebar\mu}(-\tfrac{1}{\beta}) = 2^{a_Q} \cdot 0^{c_Q}\cdot  (-\beta)^{|\mu|-|\lambda|}.\]
As $\gaps_K(\lambda,\mu)$ and $\exts_K(\lambda,\mu)$ are disjoint subsets of $\corners(\lambda,\mu)$, both $c_P$ and $c_Q$
are positive unless $\corners(\lambda,\mu)=\varnothing$, and then $a_P=c_Q=a_P=c_Q=0$. But $\SD_{\lambda/\mu}$ is a border strip with
 $\corners(\lambda,\mu)=\varnothing$ precisely when $\mu = \tilde\lambda$ and then $|\mu|-|\lambda| = -\lambda_1$.
\end{proof}

We are led to a shifted analogue of \cite[Lem.~6.7]{Yel20}. Recall that  $\bGP_{\lambda\doublebar\nu}=\bGQ_{\lambda\doublebar\nu}=0$ if $\nu\not\subseteq\lambda$.

\begin{corollary}\label{1xx-cor}
If $\mu\subseteq \lambda $ are strict partitions with $\lambda\neq\emptyset$ then 
\[
\bGP_{\lambda\doublebar\mu}(-\tfrac{1}{\beta},x_1,x_2,x_3,\dots)= (-\tfrac{1}{\beta})^{\lambda_1}\bGP_{\tilde \lambda\doublebar\mu}
\quand
\bGQ_{\lambda\doublebar\mu}(-\tfrac{1}{\beta},x_1,x_2,x_3,\dots)=(-\tfrac{1}{\beta})^{\lambda_1}\bGQ_{\tilde \lambda\doublebar\mu}.
\]
\end{corollary}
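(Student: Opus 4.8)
The plan is to deduce both identities from the single-variable evaluation in Corollary~\ref{1var-cor}, using the coproduct formula \eqref{delta-eq} to interpret the insertion of $-\tfrac1\beta$ as a new first variable as an instance of the variable-splitting that computes the coproduct. Since $\bGP_{\lambda\doublebar\mu}$ and $\bGQ_{\lambda\doublebar\mu}$ lie in the cocommutative bialgebras $\bGammaP,\bGammaQ\subseteq\mSym$, and since their coproducts in \eqref{delta-eq} are \emph{finite} sums (indexed by the strict partitions $\kappa$ with $\mu\subseteq\kappa\subseteq\lambda$), the value $\bGP_{\lambda\doublebar\mu}(x_0,x_1,x_2,\dots)$ is obtained by feeding the single variable $x_0$ into one tensor factor of $\Delta(\bGP_{\lambda\doublebar\mu})$ and the variables $x_1,x_2,\dots$ into the other. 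Exploiting cocommutativity to route $x_0$ into the \emph{outer-growing} factor, I would write
\[
\bGP_{\lambda\doublebar\mu}(x_0,x_1,x_2,\dots) = \sum_{\mu\subseteq\kappa\subseteq\lambda} \bGP_{\lambda\doublebar\kappa}(x_0)\,\bGP_{\kappa\doublebar\mu}(x_1,x_2,\dots),
\]
and likewise for $\bGQ$. This orientation of the split is the crux of the argument.

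First I would justify the expansion carefully. The coproduct of $\Sym$ is computed by doubling the variable sequence, so evaluating $\bGP_{\lambda\doublebar\mu}$ at $(x_0,x_1,x_2,\dots)$ is the result of applying the coproduct and then substituting the single variable $x_0$ into the first family and $x_1,x_2,\dots$ into the second; cocommutativity of $\bGammaP$ lets me swap the two tensor factors of \eqref{delta-eq} so that $x_0$ feeds into $\bGP_{\lambda\doublebar\kappa}$ rather than $\bGP_{\kappa\doublebar\mu}$. Because only finitely many $\kappa$ are admissible and each $\bGP_{\lambda\doublebar\kappa}(x_0)$ is a \emph{polynomial} in $x_0$ by Proposition~\ref{1var-prop}, the specialization $x_0=-\tfrac1\beta$ of the (a priori infinite-degree) series is well-defined and causes no convergence difficulty.

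Next I would set $x_0=-\tfrac1\beta$ and invoke Corollary~\ref{1var-cor}, which gives $\bGP_{\lambda\doublebar\kappa}(-\tfrac1\beta)=0$ unless $\kappa=\tilde\lambda$, in which case it equals $(-\tfrac1\beta)^{\lambda_1}$. Hence at most one term of the sum survives, namely $\kappa=\tilde\lambda$, yielding
\[
\bGP_{\lambda\doublebar\mu}(-\tfrac1\beta,x_1,x_2,\dots) = (-\tfrac1\beta)^{\lambda_1}\,\bGP_{\tilde\lambda\doublebar\mu}(x_1,x_2,\dots),
\]
with the identical computation for $\bGQ$. The one point needing care is the edge case: the index $\kappa=\tilde\lambda$ actually occurs in the sum exactly when $\mu\subseteq\tilde\lambda$ (as $\tilde\lambda\subseteq\lambda$ always holds). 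When $\mu\not\subseteq\tilde\lambda$, every term of the sum vanishes so the left-hand side is $0$, while the right-hand side is also $0$ because $\bGP_{\tilde\lambda\doublebar\mu}=\bGQ_{\tilde\lambda\doublebar\mu}=0$ by convention; thus the stated formulas hold in all cases.

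The main obstacle is not a deep one but an orientation issue. Splitting off $x_0$ into the inner-growing factor $\bGP_{\kappa\doublebar\mu}$ would instead leave a sum over all $\kappa$ with $\tilde\kappa=\mu$, which does not visibly collapse; it is precisely the use of cocommutativity to place $x_0$ in $\bGP_{\lambda\doublebar\kappa}$ that reduces the expansion to the single surviving term. Once this choice is made, the corollary follows immediately from Corollary~\ref{1var-cor}, with no further computation.
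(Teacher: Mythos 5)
Your proof is correct and takes essentially the same approach as the paper: the paper's proof is precisely the one-line expansion $\bGP_{\lambda\doublebar\mu}(-\tfrac{1}{\beta},x_1,x_2,\dots) = \sum_{\mu\subseteq \kappa\subseteq\lambda} \bGP_{\lambda \doublebar \kappa}(-\tfrac{1}{\beta}) \bGP_{\kappa\doublebar\mu}$ (with the single variable in the outer factor) followed by substitution of Corollary~\ref{1var-cor}, exactly as you do. Your extra attention to the orientation of the split via cocommutativity and to the edge case $\mu\not\subseteq\tilde\lambda$ is sound and fills in details the paper leaves implicit.
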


\begin{proof}
Notice that $\bGP_{\lambda\doublebar\mu}(-\tfrac{1}{\beta},x_1,x_2,x_3,\dots) = \sum_{\mu\subseteq \kappa\subseteq\lambda} \bGP_{\lambda \doublebar \kappa}(-\tfrac{1}{\beta}) \bGP_{\kappa\doublebar\mu} $
and then substitute the formula in Corollary~\ref{1var-cor}. The other formula is derived in a similar way.
\end{proof}

The case when $\mu=\lambda=\emptyset$ is excluded above since $\bGP_{\emptyset\doublebar\emptyset}=\bGQ_{\emptyset\doublebar\emptyset}=1$.

For any partition $\lambda$ let $\ell(\lambda) = | \{ i \in \PP : \lambda_i>0\}|$ denote its number of nonzero parts.
Repeatedly applying Corollary~\ref{1xx-cor} recovers a result of 
Nobukawa and Shimazaki
\cite[Cor.~4.1]{NobukawaShimazaki}.

\begin{corollary}[\cite{NobukawaShimazaki}]
\label{ns-cor}
If $\emptyset\neq \mu\subseteq \lambda $ are strict partitions then 
\[\ba
\bGP_{\lambda}(-\tfrac{1}{\beta},-\tfrac{1}{\beta},\dots,-\tfrac{1}{\beta})&= \bGQ_{\lambda}(-\tfrac{1}{\beta},-\tfrac{1}{\beta},\dots,-\tfrac{1}{\beta})=(-\tfrac{1}{\beta})^{|\lambda|}\quand
\\
\bGP_{\lambda\doublebar\mu}(-\tfrac{1}{\beta},-\tfrac{1}{\beta},\dots,-\tfrac{1}{\beta})&= \bGQ_{\lambda\doublebar\mu}(-\tfrac{1}{\beta},-\tfrac{1}{\beta},\dots,-\tfrac{1}{\beta})=0
\ea
\]
whenever the number of variable set to $-\frac{1}{\beta}$ respectively exceeds  $\ell(\lambda)$ and $\ell(\lambda)-\ell(\mu)$.
\end{corollary}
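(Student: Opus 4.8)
The plan is to obtain both identities by iterating Corollary~\ref{1xx-cor}, each application of which deletes the largest remaining part of the upper partition while consuming one variable specialized to $-\tfrac1\beta$ and contributing the corresponding power of $-\tfrac1\beta$. I would argue the $\bGP$-statements in full; the $\bGQ$-statements then follow verbatim from the $\bGQ$-half of Corollary~\ref{1xx-cor}.

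First I would fix notation, writing $\lambda^{(j)}$ for the strict partition obtained from $\lambda$ by deleting its $j$ largest parts, so that $\lambda^{(0)}=\lambda$, $\lambda^{(1)}=\tilde\lambda$, and $\ell(\lambda^{(j)})=\ell(\lambda)-j$. For $j\leq\ell(\lambda)$, repeated use of Corollary~\ref{1xx-cor}---legitimate at each step because $\lambda^{(i)}\neq\emptyset$ whenever $i<\ell(\lambda)$---would yield
\[
\bGP_{\lambda\doublebar\mu}(\underbrace{-\tfrac1\beta,\dots,-\tfrac1\beta}_{j},x_1,x_2,\dots)=(-\tfrac1\beta)^{\lambda_1+\dots+\lambda_j}\,\bGP_{\lambda^{(j)}\doublebar\mu}(x_1,x_2,\dots),
\]
with $\mu$ held fixed throughout, together with the analogous formula for $\bGQ$.

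For the full partition (the case $\mu=\emptyset$, i.e.\ $\bGP_\lambda$), taking $j=\ell(\lambda)$ collapses the right-hand side to $(-\tfrac1\beta)^{|\lambda|}\,\bGP_\emptyset=(-\tfrac1\beta)^{|\lambda|}$, since $\bGP_\emptyset=1$. As this is a constant, specializing any further variables to $-\tfrac1\beta$ leaves it unchanged, so $\bGP_\lambda$ equals $(-\tfrac1\beta)^{|\lambda|}$ as soon as at least $\ell(\lambda)$ variables are set to $-\tfrac1\beta$. For the skew shape with $\emptyset\neq\mu\subseteq\lambda$, the decisive point is that once $j$ exceeds $\ell(\lambda)-\ell(\mu)$ the tail $\lambda^{(j)}$ has $\ell(\lambda)-j<\ell(\mu)$ nonzero parts, so $\mu$ has strictly more nonzero parts than $\lambda^{(j)}$ and hence $\mu\not\subseteq\lambda^{(j)}$; then $\bGP_{\lambda^{(j)}\doublebar\mu}=0$ by convention. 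Since $\ell(\mu)\geq1$ forces $\ell(\lambda)-\ell(\mu)+1\leq\ell(\lambda)$, there are enough parts to peel and, once the number of specialized variables exceeds $\ell(\lambda)-\ell(\mu)$, enough variables to reach this vanishing term, so the evaluation is $0$.

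The only real care I anticipate is bookkeeping: confirming that the hypothesis $\lambda\neq\emptyset$ of Corollary~\ref{1xx-cor} persists at every peeling step and keeping the two thresholds straight. I do not expect a genuine obstacle, since the whole content is the contrast between the terminal states of the two recursions---peeling a full partition ends at the constant $\bGP_\emptyset=1$, whereas peeling a skew shape ends at a vanishing term $\bGP_{\lambda^{(j)}\doublebar\mu}$ with $\mu\not\subseteq\lambda^{(j)}$.
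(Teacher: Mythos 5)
Your proof is correct and takes exactly the paper's route: the paper justifies this corollary in a single sentence as "repeatedly applying Corollary~\ref{1xx-cor}," which is precisely your peeling argument spelled out in detail. One small bookkeeping point you glossed over: at an intermediate step where $\mu\not\subseteq\lambda^{(i)}$ the hypothesis of Corollary~\ref{1xx-cor} fails, but the peeling identity (and hence your conclusion) still holds trivially there, since both sides vanish by the convention $\bGP_{\nu\doublebar\mu}=\bGQ_{\nu\doublebar\mu}=0$ for $\mu\not\subseteq\nu$.
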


\subsection{Signed specializations}\label{signed-sect}

Finally, we consider the instances $\mGammaQ\subsetneq \mGammaP\subsetneq \mGamma$
of the bialgebras $\bGammaQ\subsetneq \bGammaP\subsetneq \bGamma$
with $\beta=-1$.
Our goal in this section is to 
classify the $\mGP$- and $\mGQ$-positive specializations of $\mGammaP$ and $\mGammaQ$.
We do this after reviewing Yeliussizov's classifications of the $\mG$-positive specializations of $\mGamma$. 

\begin{lemma}
If $f$, $g$, and $h$ are $\mG$-, $\mGP$-, are $\mGQ$-positive specializations of $\mGamma$, $\mGammaP$, and $\mGammaQ$,
then 
 \[\{ f(\mG_n) : n=0,1,2,\dots\}, 
 \quad \{ g(\mGP_n) : n=0,1,2,\dots\}, \quand \{ h(\mGQ_n) : n=0,1,2,\dots\}\]
 are all weakly decreasing sequences of real numbers in the interval $[0,1]$.
\end{lemma}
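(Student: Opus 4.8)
The plan is to derive every inequality from the single hypothesis that $f$, $g$, and $h$ are nonnegative on the defining skew functions $\mG_{\lambda\ss\mu}$, $\mGP_{\lambda\ss\mu}$, and $\mGQ_{\lambda\ss\mu}$, by exhibiting explicit one-row skew shapes whose associated power series are the relevant differences. Throughout we work at $\beta=-1$. Since $\mG_0=\mGP_0=\mGQ_0=1$ and the three maps are algebra morphisms, each sequence begins at $1$; and since $\mG_n=\mG_{(n)\ss\emptyset}$, $\mGP_n=\mGP_{(n)\ss\emptyset}$, and $\mGQ_n=\mGQ_{(n)\ss\emptyset}$ are themselves defining skew functions, nonnegativity of every term is immediate from the positivity hypothesis. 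It therefore remains to show that each sequence is weakly decreasing, after which membership in $[0,1]$ follows automatically from the initial value $1$ and nonnegativity.

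First I would establish the step from index $0$ to index $1$. Reading off the set-valued tableau definition for $\lambda=\mu=(1)$, the shape $\lambda\ss\lambda$ reduces, once the interior is deleted, to a single box, which yields the uniform identities $\mG_{(1)\ss(1)}=1+\beta\mG_1$, $\mGP_{(1)\ss(1)}=1+\beta\mGP_1$, and $\mGQ_{(1)\ss(1)}=1+\beta\mGQ_1$ (the three cases differing only in whether the single box admits primed entries). Specializing to $\beta=-1$, nonnegativity on these skew functions gives $f(\mG_1)\le 1$, $g(\mGP_1)\le 1$, and $h(\mGQ_1)\le 1$, i.e. the first descent of each sequence.

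Next I would treat the general descent for $n\ge 1$. In the $\mG$- and $\mGQ$-cases the formula $\mG_{(n+1)\ss(1)}=\mG_n-\mG_{n+1}$ (from the example following the definition of $\bG_{\lambda\ss\mu}$, at $\beta=-1$) and the analogous $\mGQ_{(n+1)\ss(1)}=\mGQ_n-\mGQ_{n+1}$ (from Example~\ref{sh-ss-ex}) are already differences of two generators, so nonnegativity gives $f(\mG_n)\ge f(\mG_{n+1})$ and $h(\mGQ_n)\ge h(\mGQ_{n+1})$. The $\mGP$-case is the one genuinely new point: by Example~\ref{sh-ss-ex} the skew function $\mGP_{(n+1)\ss(1)}$ equals $\mGQ_n-\mGP_{n+1}$, a mixture of a $Q$- and a $P$-generator rather than a difference of two $\mGP$'s. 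Here I would invoke the identity $\mGQ_n=2\mGP_n-\mGP_{n+1}$ from \eqref{GQ-GP-n-eq} to rewrite it as $2\mGP_n-2\mGP_{n+1}=2(\mGP_n-\mGP_{n+1})$; nonnegativity of $g$ on this skew function then gives $g(\mGP_n)\ge g(\mGP_{n+1})$. Assembling these descent steps with the index-$0$-to-$1$ bound shows each sequence is weakly decreasing from its initial value $1$, which with nonnegativity forces every term into $[0,1]$.

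The main obstacle is precisely this $\mGP$-case: unlike for $G$ and $Q$, a skew $P$-function is not a signed combination of $P$-generators alone, so the desired difference $\mGP_n-\mGP_{n+1}$ is not itself manifestly a defining skew function, and the relation \eqref{GQ-GP-n-eq} between $\bGQ_n$ and the $\bGP$'s is exactly what reconciles the two. A secondary, routine point is that the cited expansions of $\bGQ_{(n)\ss(m)}$ and $\bGP_{(n)\ss(m)}$ are stated only for $m\neq n$, so the $m=n$ evaluations underlying the index-one bound must be read off directly from the tableau definition (where the shape collapses to a single box) rather than quoted from those examples.
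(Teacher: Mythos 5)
Your proof is correct and follows essentially the same route as the paper: the paper cites Yeliussizov for the $\mG$ case and handles the $\mGP$ and $\mGQ$ cases through exactly your identities, $\mGQ_n - \mGQ_{n+1} = \mGQ_{(n+1)\ss(1)}$ and $\mGP_n - \mGP_{n+1} = \tfrac{1}{2}\mGP_{(n+1)\ss(1)}$, obtained from Example~\ref{sh-ss-ex} together with \eqref{GQ-GP-n-eq}. Your separate treatment of the index-$0$-to-$1$ step via the single-box shapes $(1)\ss(1)$ is if anything slightly more careful than the paper's proof, which invokes Example~\ref{sh-ss-ex} for all $n \in \NN$ even though that example is stated only for $m \neq n$.
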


\begin{proof}
The claim about $f$ is \cite[Lem.~6.3]{Yel20}. The claims about $g$ and $h$ hold since we have
\[
\mGP_n - \mGP_{n+1} = \tfrac{1}{2}\(\mGQ_n - \mGP_{n+1}\)= \tfrac{1}{2}\mGP_{(n+1)\ss(1)}
\quand
\mGQ_n - \mGQ_{n+1} = \mGQ_{(n+1)\ss(1)}
\]
for all $n \in \NN$ by  Example~\ref{sh-ss-ex} and \eqref{GQ-GP-n-eq}, along with $g(\mGP_0) =g(1)= h(\mGQ_0)=h(1)=1$.
\end{proof}

Let $a=(a_1\geq a_2 \geq \dots \geq 0)$ be a sequence of nonnegative real numbers with finite sum. 
It follows from \cite[Lem.~6.5]{Yel20} that the  map $\phi_a : \Sym \to \RR$ 
extends to a specialization of $\mGamma$
if and only if $\max(a) \leq 1$. 

The same result implies that $\varepsilon_a : \Sym \to\RR$ and $\pi_\gamma : \Sym\to\RR$
extend to specializations of $\mGamma$ for all choices of $a=(a_1\geq a_2 \geq \dots \geq 0)$ and $\gamma \in \RR_{\geq 0}$.
We denote these extended specializations
 using the same symbols as in Section~\ref{unsigned-sect}, namely:
\[
\vphi_a : \pGamma\to\RR,\quad
\vvarepsilon_a : \pGamma\to\RR,\quand
\vpi_\gamma : \pGamma\to\RR.
\]
%Each $\mG$-positive specialization  $\rho : \mGamma\to \RR$
%has $\rho(\mG_1)\in[0,1]$ by \cite[Lem.~6.3]{Yel20}.
The following combines \cite[Thm.~6.4 and Prop.~6.6]{Yel20}.

\begin{theorem}[\cite{Yel20}]
\label{mG-thm}
If  $\delta \in [0,1)$ and $\rho$ is a specialization of $\mGamma$
then the following are equivalent:
\ben
\item[(a)] The map $\rho$ is  a $\mG$-positive specialization of $\mGamma$ with  $ \rho(\mG_1)= \delta$.

\item[(b)] 
It holds that $\rho = \wphi_\alpha \sqcup \wvarepsilon_b \sqcup \wpi_\gamma$
for unique sequences of real numbers  
\[ a=(1> a_1\geq a_2 \geq \dots \geq 0), \quad b=(b_1\geq b_2\geq  \dots \geq 0),
\quand \gamma\geq0
\]
satisfying
$ \delta = 1 - e^{-\gamma} \prod_{n=1}^\infty \frac{1-a_n}{1+b_n}$.

\item[(c)] One has $ \sum_{n=0}^\infty \rho\( \mG_n - \mG_{n+1}\)z^n = (1-\delta) e^{\gamma z} \prod_{n=1}^\infty \frac{1+b_n z}{1-a_nz}$
for some parameters 
 as in (b).

\een
\end{theorem}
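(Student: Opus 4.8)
The plan is to establish the cycle (a) $\Rightarrow$ (c) $\Rightarrow$ (b) $\Rightarrow$ (a), organized around one structural observation. Specializing \eqref{GG-eq} to $\beta=-1$ gives $\sum_{n\geq 0}(\mG_n-\mG_{n+1})z^n=(1-\mG_1)H(z)$; in particular $\mG_n-\mG_{n+1}=\mG_{(n+1)\ss(1)}$ by Example~\ref{sh-ss-ex}, so these differences are nonnegative under any $\mG$-positive map. The crucial point is that, as a power series, $1-\mG_1=\prod_{a}(1-x_a)=E(-1)$, which is grouplike for the variable-doubling coproduct, i.e. $\Delta(1-\mG_1)=(1-\mG_1)\otimes(1-\mG_1)$. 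Hence $1-\rho(\mG_1)$ is \emph{multiplicative} across unions, and this is precisely what produces the product formula $1-\delta=e^{-\gamma}\prod_n\frac{1-a_n}{1+b_n}$ in (b) rather than an additive one. I would record this first, as it drives every subsequent computation.

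For (b) $\Rightarrow$ (a) and (b) $\Rightarrow$ (c): the extension $\vphi_a$ to $\mGamma$ exists exactly when $\max(a)\leq 1$, which is guaranteed by $a=(1>a_1\geq\cdots)$, while $\vvarepsilon_b$ and $\vpi_\gamma$ always extend. Since $\{\mG_{\lambda\ss\mu}\}$ is comultiplicative (Remark~\ref{semigroup-rem}), the $\mG$-positive specializations are closed under $\sqcup$, so it suffices to check that each building block $\vphi_a$, $\vvarepsilon_b$, $\vpi_\gamma$ is $\mG$-positive; for $\vphi_a$ and $\vvarepsilon_b$ this is nonnegativity of $\mG_{\lambda\ss\mu}$ at the relevant arguments (with $\max(a)\leq 1$ ensuring both convergence and nonnegativity), and $\vpi_\gamma$ follows via the limit \eqref{limit-eq}. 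For (c), mirroring the remark following Theorem~\ref{y-thm3}, I apply $\rho$ to $(1-\mG_1)H(z)$, pull out $1-\delta=\rho(1-\mG_1)$ using the grouplike property, and evaluate $\rho(H(z))$ through \eqref{12cup-eq0}, \eqref{12cup-eq00}, and \eqref{12cup-eq}; this yields the stated product and simultaneously pins down $1-\delta=e^{-\gamma}\prod_n\frac{1-a_n}{1+b_n}$.

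For (c) $\Rightarrow$ (b): the series $\sum_n\rho(\mG_n-\mG_{n+1})z^n$ determines $g(z)=\sum_n\rho(\mG_n)z^n=\frac{1-z\sum_n\rho(\mG_n-\mG_{n+1})z^n}{1-z}$, hence determines $\rho$ on the algebra generators $\{\mG_n\}$ and so on all of $\mGamma$. I would then exhibit $\rho'=\vphi_a\sqcup\vvarepsilon_b\sqcup\vpi_\gamma$ with parameters read off from the product in (c); by (b) $\Rightarrow$ (c) this $\rho'$ satisfies the same identity, forcing $\rho=\rho'$, while uniqueness of $a$, $b$, $\gamma$ follows from the zeros, poles, and exponential growth rate of the right-hand side of (c).

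The main obstacle is (a) $\Rightarrow$ (c), where the Edrei--Thoma theorem enters. Setting $c_n=\rho(\mG_n-\mG_{n+1})=\rho(\mG_{(n+1)\ss(1)})$, we have $c_n\geq 0$ by hypothesis and $c_0=1-\delta>0$. I would argue that $\mG$-positivity forces the normalized sequence $(c_n/c_0)$ to be a P\'olya frequency sequence: the minors of its Toeplitz matrix should be expressible, through a dual Jacobi--Trudi expansion, as $\rho$ applied to $\NN$-combinations of skew functions $\mG_{\lambda\ss\mu}$, hence are nonnegative. Edrei's theorem --- the analytic engine behind Theorem~\ref{schur-thm} --- then forces $\sum_n(c_n/c_0)z^n=e^{\gamma z}\prod_n\frac{1+b_nz}{1-a_nz}$ with $\gamma\geq 0$ and $\sum_n(a_n+b_n)<\infty$, which is exactly (c). Establishing the determinantal identity that converts $\mG$-positivity into total nonnegativity of these minors is the delicate step; an alternative I would keep in reserve is to transport the problem to the already-classified $\beta=1$ case of Theorem~\ref{y-thm3} through a substitution built from the bar operation $\overline{x}=\frac{-x}{1+x}$, though the sign changes in the Schur expansion of $\mG_\lambda$ make such a transport less automatic than in the unsigned setting.
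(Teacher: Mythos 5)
First, a point of comparison: the paper does not prove this theorem at all --- it is imported with the sentence ``The following combines \cite[Thm.~6.4 and Prop.~6.6]{Yel20}'' --- so your attempt has to be measured against what a complete proof requires rather than against an argument in the text. Your treatment of the easy implications is essentially sound: the observation that $1-\mG_1=E(-1)=\prod_i(1-x_i)$ is grouplike, so that $1-\rho(\mG_1)$ is multiplicative under unions, is correct and is exactly the mechanism behind the product constraint in (b); the implications (b)$\Rightarrow$(a), (b)$\Rightarrow$(c) and (c)$\Rightarrow$(b) then go through as you outline, modulo the technical point that ``$\rho(H(z))$'' is not literally defined (the $h_n$ do not lie in $\mGamma$), so the factorization must instead be run through multiplicativity of the series $A_\rho(z)=\sum_n\rho(\mG_n-\mG_{n+1})z^n$ under $\sqcup$ together with its direct evaluation on the building blocks $\wphi_{a_i}$, $\wvarepsilon_{b_i}$, $\wpi_\gamma$.

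The genuine gap is (a)$\Rightarrow$(c), and it is twofold. First, the total-positivity step is only conjectured: you say the Toeplitz minors of $(c_n)$ ``should be expressible \dots\ as $\rho$ applied to $\NN$-combinations of skew functions,'' but this is the mathematical heart of the theorem and you do not establish it. Concretely, the minor indexed by a skew shape $\lambda/\mu$ with $\ell$ rows equals $\rho\bigl((1-\mG_1)^{\ell}s_{\lambda/\mu}\bigr)$ (up to the factor $c_0^{\ell}$), and its nonnegativity is delicate for two reasons you cannot sidestep: the expansion of $s_{\lambda/\mu}$ into the functions $\mG_\nu$ is an \emph{infinite} sum (already $s_{(1,1)}=\sum_{k\geq 2}(k-1)\mG_{(1^k)}$), so $s_{\lambda/\mu}$ lies outside $\mGamma$ and cannot be fed to $\rho$, which carries no continuity; and multiplication by $1-\mG_1$ is not positivity-preserving in the $\mG$-basis at $\beta=-1$, since the Pieri rule \eqref{G-pieri} acquires alternating signs there. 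Second, even granting the PF property, Edrei's theorem only yields $\sum_n c_nz^n=c_0\,e^{\gamma z}\prod_n\frac{1+b_nz}{1-a_nz}$; item (c) demands parameters ``as in (b),'' i.e.\ additionally $1-\delta=e^{-\gamma}\prod_n\frac{1-a_n}{1+b_n}$, which after setting $z=1$ is equivalent to $\sum_{n\geq 0}c_n=1$, i.e.\ to $\lim_{N\to\infty}\rho(\mG_N)=0$. This is precisely where the hypothesis $\delta<1$ does real work --- it fails for normalized specializations such as $\one$ and $\wone$ --- it does not follow from Edrei's theorem, and your proposal never addresses it. (The requirement $a_1<1$, which you also leave unverified, does at least follow cheaply from $\sum_n c_n\leq 1$.) Until both points are supplied --- and your fallback of transporting to the $\beta=1$ case via $\overline{x}$ is, as you yourself concede, blocked by the same sign issues --- the hard direction remains unproven.
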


Although any $\mG$-positive specialization of $\mGamma$
 restricts to a specialization of the subalgebras $\mGammaP$ and $\mGammaQ$,
it does not follow immediately  
that these restrictions are $\bGP$- or $\bGQ$-positive.  

\begin{lemma}\label{aneg-lem}
Let $a=(1\geq a_1\geq a_2 \geq \dots \geq 0)$ and $b=(b_1\geq b_2\geq \dots \geq 0)$ be sequences of real numbers with finite sum and let $\gamma \in \RR_{\geq 0}$.
Then $\wphi_a$, $\wvarepsilon_b$, and $\wpi_\gamma$ restrict to 
$\bGP$- and $\bGQ$-positive
specializations of  $\mGammaP$ and $\mGammaQ$.
\end{lemma}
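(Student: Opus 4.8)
The plan is to reduce the whole statement to single-variable evaluations governed by Proposition~\ref{1var-prop} and then propagate positivity through unions and limits, exploiting that the skew families $\{\mGP_{\lambda\ss\mu}\}$ and $\{\mGQ_{\lambda\ss\mu}\}$ are comultiplicative (Remark~\ref{semigroup-rem}), so that finite unions of $\bGP$- and $\bGQ$-positive specializations remain positive.

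First I would treat a single nonnegative variable $c\in[0,1]$. Setting $\beta=-1$ in Proposition~\ref{1var-prop}, the value $\wphi_c(\mGP_{\lambda\ss\mu})=\mGP_{\lambda\ss\mu}(c)$ is either $0$ or
\[
2^{a_P}(2-c)^{b_P}(1-c)^{c_P}\,c^{\,|\lambda|-|\mu|},
\]
and similarly $\mGQ_{\lambda\ss\mu}(c)=2^{a_Q}(2-c)^{b_Q}(1-c)^{c_Q}c^{\,|\lambda|-|\mu|}$. Since $c\in[0,1]$ forces $2-c\geq1$, $1-c\geq0$, and $c\geq0$, every factor is nonnegative, so $\wphi_c$ restricts to $\bGP$- and $\bGQ$-positive specializations of $\mGammaP$ and $\mGammaQ$. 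There is no convergence subtlety here, since a single-variable set-valued tableau uses only entries from $\{1',1\}$, so each $\mGP_{\lambda\ss\mu}(c)$ is an honest polynomial.

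Next I would bootstrap to the three families. For $\wphi_a$ with $1\geq a_1\geq a_2\geq\cdots$, write $\wphi_a=\wphi_{a_1}\sqcup\wphi_{a_2}\sqcup\cdots$; each finite truncation $\wphi_{(a_1,\dots,a_k)}=\wphi_{a_1}\sqcup\cdots\sqcup\wphi_{a_k}$ is a finite union of the positive single-variable specializations above, hence $\bGP$- and $\bGQ$-positive, and $\wphi_a(f)=\lim_k\wphi_{(a_1,\dots,a_k)}(f)$ pointwise by the convergent substitution defining $\wphi_a$, so positivity passes to the limit. For $\wvarepsilon_b=\wphi_b\circ\omega$, the formula \eqref{omega-pq-eq} with $\beta=-1$ gives
\[
\wvarepsilon_b(\mGP_{\lambda\ss\mu})=\mGP_{\lambda\ss\mu}\!\left(\tfrac{b_1}{1+b_1},\tfrac{b_2}{1+b_2},\dots\right)=\wphi_{b'}(\mGP_{\lambda\ss\mu}),
\]
and likewise for $\mGQ_{\lambda\ss\mu}$, where $b'=(\tfrac{b_1}{1+b_1},\tfrac{b_2}{1+b_2},\dots)$ has $\max(b')<1$ and finite sum. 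Thus $\wvarepsilon_b$ restricts to the same specialization as $\wphi_{b'}$, which is already covered by the previous case.

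The remaining and most delicate case is $\wpi_\gamma$, which does not split as a union of positive single-variable atoms. Here I would realize it as a pointwise limit of positive specializations: by \eqref{limit-eq} extended to $\mGamma$ through continuity, $\wpi_\gamma(f)=\lim_{N\to\infty}\wphi_{(\gamma/N,\dots,\gamma/N)}(f)$, and once $N$ is large enough that $\gamma/N\leq1$, each $\wphi_{(\gamma/N,\dots,\gamma/N)}$ is a finite union of the positive single-variable specializations, hence $\bGP$- and $\bGQ$-positive; positivity relative to a fixed set is preserved under pointwise limits, so $\wpi_\gamma$ is positive as well. I expect the main obstacle to be exactly this final interchange, namely justifying that \eqref{limit-eq} survives the passage from $\Sym$ to the unbounded algebra $\mGamma$, i.e.\ that $\lim_N$ commutes with the infinite Schur expansion of each $\mGP_{\lambda\ss\mu}$ and $\mGQ_{\lambda\ss\mu}$. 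This is controlled by the same convergence estimates that make $\wpi_\gamma$ and the $\wphi_{(\gamma/N,\dots,\gamma/N)}$ well-defined specializations of $\mGamma$, running parallel to the corresponding limiting arguments in \cite{Yel20}.
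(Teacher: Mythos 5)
Your proposal is correct and takes essentially the same approach as the paper's proof: one-variable positivity from Proposition~\ref{1var-prop} at $\beta=-1$, then $\wphi_a$ as a union of one-variable specializations, then $\wvarepsilon_b$ reduced to $\wphi_{b'}$ via the $\omega$-substitution identity \eqref{omega-pq-eq} (the paper routes this through $\Omega^{(-1)}$ and Lemma~\ref{bOmega-lem}, which rests on the same identity), and finally $\wpi_\gamma$ as the limit \eqref{limit-eq} of the specializations $\wphi_{(\gamma/N,\dots,\gamma/N)}$. The extra care you take with the finite truncations and the limit interchange is sound but not a different method; the paper simply states these steps more tersely.
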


\begin{proof}
The map $\wphi_a$ is the union of the one-variable specializations $\wphi_{a_i}$,
which restrict to $\bGQ$- and $\bGP$-positive specializations of  $\mGammaP$ and $\mGammaQ$ when $a_i \in [0,1]$
by Proposition~\ref{1var-prop} with $\beta=-1$.
Hence, the same is true of $\wphi_a$.

Next, by Lemma~\ref{bOmega-lem} with $\beta=-1$ we have 
$\wvarepsilon_b\circ \Omega^{(-1)} = \wphi_b\circ \omega \circ \omega \circ \Psi^{(-1)}=
\wphi_b\circ \Psi^{(-1)}$
which is equal to $\wphi_{c}$ for the sequence $c = (\frac{b_1}{1+b_1}, \frac{b_2}{1+b_2},\dots)$.
Since $\Omega^{(-1)}$ restricts to the identity on $\mGammaP$ and $\mGammaQ$,
the desired claim about $\wvarepsilon_b$ follows from what was already shown for $\wphi_a$.

Finally, $\wpi_\gamma$ restricts to $\bGQ$- and $\bGP$-positive specializations of  $\mGammaP$ and $\mGammaQ$
since by \eqref{limit-eq} the value of $\wpi_\gamma(f)$ 
is the limit as $N\to \infty$ of 
$ \wphi_{a}(f)$ where $a=(\gamma/N,\gamma/N,\dots,\gamma/N)\in\RR^N$.
\end{proof}

The following is now clear from Theorem~\ref{mG-thm} and \eqref{GQ-GP-n-eq}
given Remark~\ref{semigroup-rem} and Lemma~\ref{aneg-lem}.

\begin{corollary}\label{res-cor}
A $\mG$-positive specialization $\rho$ of $\mGamma$ restricts to a $\mGP$-positive specialization of $\mGammaP$ with   
$\rho(\mGP_1) = \rho(\mG_1)$
 and 
 to a $\mGQ$-positive specialization of $\mGammaQ$
with $ \rho(\mGQ_1) =1 -  (1-\rho(\mG_1))^2$.
\end{corollary}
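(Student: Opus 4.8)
The plan is to combine the structural classification of $\mG$-positive specializations in Theorem~\ref{mG-thm} with the single-factor positivity from Lemma~\ref{aneg-lem}, using the semigroup structure recorded in Remark~\ref{semigroup-rem}. First I would invoke Theorem~\ref{mG-thm} to write the given $\mG$-positive specialization as a union $\rho = \wphi_a \sqcup \wvarepsilon_b \sqcup \wpi_\gamma$ for suitable parameters $a$, $b$, $\gamma$ (with $a_n \leq 1$). By Lemma~\ref{aneg-lem}, each of the three factors $\wphi_a$, $\wvarepsilon_b$, $\wpi_\gamma$ restricts to an $\mGP$-positive specialization of $\mGammaP$ and to an $\mGQ$-positive specialization of $\mGammaQ$.

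The key point is then that restriction to $\mGammaP$ (respectively $\mGammaQ$) intertwines with the union operation. Since $\mGammaP$ and $\mGammaQ$ are sub-bialgebras of $\mGamma$ by the coproduct formulas \eqref{delta-eq}, the coproduct $\Delta$ of $\mGamma$ restricts to the coproduct of each subalgebra; consequently the restriction of $\rho_1 \sqcup \rho_2$ to either subalgebra equals the union of the separate restrictions. Applying Remark~\ref{semigroup-rem}, which asserts that a union of $\mGP$-positive (respectively $\mGQ$-positive) specializations is again of the same type, I conclude that $\rho|_{\mGammaP}$ is $\mGP$-positive and $\rho|_{\mGammaQ}$ is $\mGQ$-positive.

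It remains to verify the two scalar identities. The equality $\rho(\mGP_1) = \rho(\mG_1)$ is immediate, since $\bGP_1 = \bG_1$ as elements of $\RR\llbracket\bfx\rrbracket$, so the two specializations agree on this element. For the second, I would specialize \eqref{GQ-GP-n-eq} to $n = 1$ and $\beta = -1$, giving $\mGQ_1 = 2\mGP_1 - \mGP_2$, and then use $(\mGP_1)^2 = \mGP_2$ together with $\mGP_1 = \mG_1$ to compute $\rho(\mGQ_1) = 2\rho(\mG_1) - \rho(\mG_1)^2 = 1 - (1 - \rho(\mG_1))^2$, exactly as claimed.

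The main obstacle — really the only step requiring care — is the interchange of restriction and union used in the second paragraph; everything else is bookkeeping. For $\beta \geq 0$ one could sidestep this by checking positivity directly on the generating bases via Proposition~\ref{ppp-prop}, but that proposition is unavailable at $\beta = -1$, so the semigroup structure of Remark~\ref{semigroup-rem} is genuinely needed here. The interchange itself is not automatic for an arbitrary subspace; it holds precisely because $\mGammaP$ and $\mGammaQ$ are sub-bialgebras, so one must cite \eqref{delta-eq} to justify that $\Delta$ restricts correctly before appealing to the semigroup property.
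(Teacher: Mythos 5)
Your proposal is correct and is essentially the paper's own proof: the paper disposes of this corollary in a single sentence citing exactly the ingredients you assemble, namely the decomposition from Theorem~\ref{mG-thm}, the factor-wise positivity of Lemma~\ref{aneg-lem}, the semigroup property of Remark~\ref{semigroup-rem}, and the identity \eqref{GQ-GP-n-eq} together with $(\mGP_1)^2 = \mGP_2$ and $\mG_1 = \mGP_1$ for the scalar formulas, with your second paragraph merely making explicit the restriction-commutes-with-union point that the paper leaves implicit. One shared caveat: Theorem~\ref{mG-thm} is stated only for $\rho(\mG_1) = \delta \in [0,1)$, so the normalized case $\rho(\mG_1) = 1$ is not literally covered by this argument (yours or the paper's) and requires Yeliussizov's extended classification, cf.\ \cite[Thm.~6.8]{Yel20} and Proposition~\ref{+prop}.
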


As an alternative to \eqref{x-bar-eq}, define
 \be
 \widetilde x = \frac{-x}{1-x}\quad\text{for any parameter $x$}.
 \ee
A specialization of $\mGamma$, $\mGammaP$, or $\mGammaQ$
is \defn{normalized} if its value at $\mG_1$, $\mGP_1$, or $\mGQ_1$ is 
equal to one, and \defn{unnormalized} if this value is
strictly less than one.
Corollary~\ref{res-cor} implies that 
a specialization of $\mGamma$ is normalized
if and only if its restrictions to $\mGammaP$ and $\mGammaQ$ are normalized.

\begin{theorem} \label{main-thm3}
The unnormalized $\mGP$-positive specializations of $\mGamma_P$
and the unnormalized $\mGQ$-positive specializations of $\mGamma_Q$
are each obtained by restricting some unnormalized $\pG$-positive specialization of $\pGamma$.
If  $\delta \in [0,1)$ and $\rho$ is a specialization of $\mGamma$
then the following are equivalent:
\ben
\item[(a)] The map $\rho$ restricts to  a $\mGP$-positive specialization of $\mGammaP$ with  $ \rho(\mGP_1)= \delta$.

\item[(b)] The map $\rho$ restricts to  a $\mGQ$-positive specialization of $\mGammaQ$ with  $ \rho(\mGQ_1)= 1-(1-\delta)^2$.

\item[(c)] The restriction of $\rho$ to $\mGammaP$ coincides with  $\wphi_\alpha  \sqcup \wpi_\gamma$
for some real numbers  
\[ a=(1> a_1\geq a_2 \geq \dots \geq 0)
\quand \gamma\geq0
\quad\text{satisfying
$ \textstyle\delta = 1 - e^{-\gamma} \prod_{n=1}^\infty (1-a_n)$.}\]

\item[(d)]  One has $\ds\sum_{n=0}^\infty \rho\( \mGQ_n - \mGQ_{n+1}\)z^n = (1-\delta)^2 e^{2 \gamma z} \prod_{n=1}^\infty \frac{1-\widetilde a_n z}{1-a_nz}$
for some parameters as in (c).

\een

\end{theorem}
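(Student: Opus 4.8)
The plan is to mirror the proof of Theorem~\ref{main-thm2}, replacing $\beta=1$ by $\beta=-1$ and using Corollary~\ref{res-cor} and Lemma~\ref{aneg-lem} in place of Proposition~\ref{ppp-prop}. For the easy direction I assume (c), so that $\rho$ agrees on $\mGammaP$ with $\wphi_a\sqcup\wpi_\gamma$. By Lemma~\ref{aneg-lem} the maps $\wphi_a$ and $\wpi_\gamma$ restrict to $\mGP$- and $\mGQ$-positive specializations, and by Remark~\ref{semigroup-rem} so does their union; this proves the positivity in (a) and (b). Since $\wphi_a\sqcup\wpi_\gamma$ is itself a $\mG$-positive specialization of $\mGamma$ (Theorem~\ref{mG-thm} with empty $b$), restricting it yields the preamble, and Corollary~\ref{res-cor} gives $\rho(\mGP_1)=\rho(\mG_1)=\delta$ and $\rho(\mGQ_1)=1-(1-\delta)^2$. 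Feeding \eqref{12cup-eq0}, \eqref{12cup-eq00}, and \eqref{12cup-eq} into the $\beta=-1$ instance of Lemma~\ref{HIMN-lem}, namely $\sum_{n\geq 0}(\mGQ_n-\mGQ_{n+1})z^n=(1-\mG_1)E(z-1)H(z)$, and using $1-a_n+a_nz=(1-a_n)(1-\widetilde{a_n}z)$, one checks that (d) holds. Reading off the zeros and poles of (d) recovers $a$ and $\gamma$ uniquely, which gives (d)$\Rightarrow$(c).

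For the hard implications (a)$\Rightarrow$(c) and (b)$\Rightarrow$(c), set $\Theta=\Theta^{(-1)}$ and $\Omega=\Omega^{(-1)}$ and form $\sigma=\rho\circ\Theta:\mGamma\to\RR$, which depends only on the restriction of $\rho$ to $\mGammaP\supseteq\mGammaQ$. The crucial point is that $\sigma$ is $\mG$-positive, and I expect this to be the main obstacle: the $\beta=1$ argument fails here because Lemma~\ref{theta-lem} only expresses $\Theta(\mG_{\lambda\ss\mu})$ as an $\NN[\beta]$-combination of the \emph{straight} functions $\mGP_\nu$, whose coefficients acquire signs at $\beta=-1$. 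To get around this I would prove the sharper identity $\bTheta(\bG_{\lambda\ss\mu})=\bGP_{(\lambda+\delta)/(\mu+\delta)}$, a \emph{single-bar} skew function, where $\delta$ is the $n$-part staircase for any $n\geq\ell(\lambda)$ and $\mu$ is padded with zeros; this refines \eqref{bTheta-eq} and can be verified by checking that both sides obey the same coproduct recursion coming from \eqref{delta-eq} and agree when $\mu=\emptyset$. Granting it, \eqref{singlebar-eq} at $\beta=-1$ (where $-\beta=1$) rewrites $\mGP_{(\lambda+\delta)/(\mu+\delta)}=\sum_{\pi\subseteq\mu+\delta}\mGP_{(\lambda+\delta)\ss\pi}$ as a nonnegative combination of double-bar functions, so $\sigma(\mG_{\lambda\ss\mu})=\sum_{\pi}\rho(\mGP_{(\lambda+\delta)\ss\pi})\geq 0$ whenever $\rho$ is $\mGP$-positive; the $\mGQ$-positive case is identical.

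Once $\sigma$ is known to be $\mG$-positive, Theorem~\ref{mG-thm} gives $\sigma=\wphi_a\sqcup\wvarepsilon_b\sqcup\wpi_\gamma$ for unique parameters. Exactly as in Theorem~\ref{main-thm2}, Lemma~\ref{theta-lem2} gives $\sigma=\sigma\circ\Omega$, and the formulas $\wphi_a\circ\Omega=\wvarepsilon_{a'}$, $\wvarepsilon_b\circ\Omega=\wphi_{b'}$, $\wpi_\gamma\circ\Omega=\wpi_\gamma$ from Lemma~\ref{bOmega-lem} (with $a'=(\tfrac{a_1}{1-a_1},\tfrac{a_2}{1-a_2},\dots)$ and $b'=(\tfrac{b_1}{1+b_1},\dots)$) together with uniqueness force $b=a'$ (and dually $a=b'$). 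Since then $1+b_n=(1-a_n)^{-1}$ and $1+b_nz=1-\widetilde{a_n}z$, the formula of Theorem~\ref{mG-thm}(c) for $\sigma$ collapses to $\sum_{n\geq 0}\rho(\mGQ_n-\mGQ_{n+1})z^n=(1-\delta)^2e^{\gamma z}\prod_{n\geq 1}\tfrac{1-\widetilde{a_n}z}{1-a_nz}$, using $\sigma(\mG_1)=\rho(\mGQ_1)=1-(1-\delta)^2$; replacing $\gamma$ by $2\gamma$ matches this to the restriction of $\wphi_a\sqcup\wpi_\gamma$ computed above. Finally, since $\mGQ_n-\mGQ_{n+1}=2\mGP_n-3\mGP_{n+1}+\mGP_{n+2}$ for $n\geq 1$, the coefficients of this series, together with $\mGP_0=1$, $\mGP_1=\delta$, and $\mGP_2=\mGP_1^2$, determine $\rho$ recursively on the generators $\{\mGP_n\}$ of $\mGammaP$ (equivalently $\{\mGQ_n\}$ of $\mGammaQ$), so $\rho$ coincides with the restriction of $\wphi_a\sqcup\wpi_\gamma$, giving (c). The equivalence (a)$\Leftrightarrow$(b) is then immediate, since each forces this common form and, by Corollary~\ref{res-cor}, the extending $\mG$-positive specialization restricts to both with $\rho(\mGP_1)=\delta$ and $\rho(\mGQ_1)=1-(1-\delta)^2$.
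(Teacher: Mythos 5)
Your overall route is the same as the paper's: (c)$\Leftrightarrow$(d) via Lemma~\ref{HIMN-lem} and the identities \eqref{12cup-eq0}--\eqref{12cup-eq}, (c)$\Rightarrow$(a),(b) via Lemma~\ref{aneg-lem}, Remark~\ref{semigroup-rem} and Corollary~\ref{res-cor}, and the hard direction by pushing $\rho$ through $\Theta^{(-1)}$, applying Theorem~\ref{mG-thm}, and using $\Theta^{(-1)}=\Theta^{(-1)}\circ\Omega^{(-1)}$ to force $b_n=-\widetilde a_n$; your first and third paragraphs essentially reproduce the paper's argument and are sound. The genuine gap is in your second paragraph: the ``sharper identity'' $\bTheta(\bG_{\lambda\ss\mu})=\bGP_{(\lambda+\delta)/(\mu+\delta)}$ on which your positivity claim rests is false. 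Take $\lambda=\mu=(1)$: then $\bG_{(1)\ss(1)}=\prod_{i\geq1}(1+\beta x_i)=1+\beta\bG_1$, so linearity and \eqref{bTheta-eq} give $\bTheta(\bG_{(1)\ss(1)})=1+\beta\bGQ_1$, whereas $\bGP_{(2)/(2)}=1$, since a single-bar skew function of shape $\nu/\nu$ admits only the empty filling; enlarging the staircase does not help, because $\bGP_{(\lambda+\delta)/(\lambda+\delta)}=1$ for every $\delta$ while $\bG_{\lambda\ss\lambda}\neq1$. Your proposed verification collapses for the same reason: the coproduct recursion \eqref{delta-eq} is a property of the \emph{double-bar} functions and fails for single-bar ones, e.g. $\Delta(\bGP_{(2)/(1)})=\Delta(\bGQ_1)=1\otimes\bGQ_1+\bGQ_1\otimes1+\beta\,\bGQ_1\otimes\bGQ_1$, which has an extra term beyond what the recursion you posit would predict. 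Consequently the asserted expansion $\sigma(\mG_{\lambda\ss\mu})=\sum_{\pi}\rho(\mGP_{(\lambda+\delta)\ss\pi})$ is unjustified, and with it the key positivity step of your proof.

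What the paper actually does at this step is precisely your base case $\mu=\emptyset$ and nothing more: by \eqref{bTheta-eq} and \eqref{singlebar-eq} at $\beta=-1$ one has $\Theta^{(-1)}(\mG_\lambda)=\mGP_{(\lambda+\delta)/\delta}=\sum_{\nu\subseteq\delta}\mGP_{(\lambda+\delta)\ss\nu}=\sum_{\nu\subseteq\delta}\mGQ_{(\lambda+\delta)\ss\nu}$, a coefficient-one expansion into double-bar functions of \emph{either} type, so $\rho\circ\Theta^{(-1)}$ is nonnegative on the straight functions $\mG_\lambda$ whenever $\rho$ is $\mGP$- or $\mGQ$-positive, and this is what is fed into Theorem~\ref{mG-thm}. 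In other words, no formula for $\Theta^{(-1)}(\mG_{\lambda\ss\mu})$ on general skew shapes is claimed or needed there; your (correct) observation that Lemma~\ref{theta-lem} is useless at $\beta=-1$ should have led you to stop at this straight-shape expansion rather than to invent a skew one. If one does want an identity valid for skew shapes, small computations suggest it should carry double bars on both sides, $\bTheta(\bG_{\lambda\ss\mu})=\bGQ_{(\lambda+\delta)\ss(\mu+\delta)}$ (both sides equal $1+\beta\bGQ_1$ when $\lambda=\mu=(1)$), but proving and exploiting such a statement is a different argument from the one you wrote and from the paper's.
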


\begin{proof}
Parts (c) and (d) are equivalent by Lemma~\ref{HIMN-lem}
 using the identities \eqref{12cup-eq0}, \eqref{12cup-eq00}, \eqref{12cup-eq} via the argument in the proof of Theorem~\ref{main-thm2}.
In turn, part (c) implies both (a) and (b)
by Theorem~\ref{mG-thm} and Corollary~\ref{res-cor}.

Conversely, suppose
$\rho$ is an unnormalized $\mGP$-positive specialization of $\mGammaP$
or an unnormalized $\mGQ$-positive specialization of $\mGammaQ$.
In either case, it follows from \eqref{singlebar-eq} and \eqref{bTheta-eq} with $\beta=-1$  that
 $\rho \circ \Theta^{(-1)}$ is a $\mG$-positive specialization of $\mGamma$
 whose value at $\mG_1$ is 
 \[ \rho(\mGQ_1) = \rho(2\mGP_1 - \mGP_2)= \rho(2\mGP_1 - (\mGP_1)^2) = 1-(1-\rho(\mGP_1))^2 < 1 .\]
 Hence $\rho \circ \Theta^{(-1)} = \wphi_\alpha \sqcup \wvarepsilon_b \sqcup \wpi_\gamma$ for unique parameters $a$, $b$, and $\gamma$ as in 
 Theorem~\ref{mG-thm}(b).
 Since $\rho \circ \Theta^{(-1)} = \rho \circ \Theta^{(-1)}\circ \Omega^{(-1)}$ by Lemma~\ref{bOmega-lem},
 it follows by the argument in the proof of Theorem~\ref{main-thm2} (just with $\beta=1$ changed to $\beta=-1$) that $b_n = -\widetilde a_n$ for all $n \in \PP$.
Therefore
\[
\sum_{n=0}^\infty \rho\( \mGQ_n - \mGQ_{n+1}\)z^n =
\sum_{n=0}^\infty \rho\circ \Theta^{(-1)}\( \mG_n - \mG_{n+1}\)z^n =
 (1-\rho(\mGQ_1)) e^{ \gamma z} \prod_{n=1}^\infty \frac{1-\widetilde a_n z}{1-a_nz}
\]
and by Theorem~\ref{mG-thm} we have 
$\textstyle 1-\rho(\mGQ_1) = e^{-\gamma} \prod_{n=1}^\infty \frac{1-a_n}{1-\widetilde a_n}= e^{-\gamma} \prod_{n=1}^\infty (1-a_n)^2.$
This is just part (d) with $\gamma$ replaced by $\gamma/2$. Thus (a) $\Rightarrow$ (d)  and (b) $\Rightarrow$ (d), so all four properties are equivalent.
\end{proof}
 
We conclude this section with
a shifted version of \cite[Thm.~6.8]{Yel20}. 
Let $\wone = \wphi_{(1,0,0,0,\dots)}$.
Recall that if $\lambda=(\lambda_1,\lambda_2,\lambda_3,\dots)$ then $\tilde\lambda=(\lambda_2,\lambda_3,\dots)$.
If $\lambda$ is empty or has only one nonzero part, then we interpret this notation to mean $\tilde\lambda=\emptyset$.
\begin{proposition}\label{+prop}
Let $\varphi$ be a specialization of $\mGammaP$ (respectively, $\mGammaQ$).
Then $\varphi^+ := \wone \sqcup \varphi$ satisfies
\be\label{+prop-eq}
\varphi^+(\mGP_{\lambda\doublebar\mu}) = \varphi(\mGP_{\tilde\lambda\doublebar\mu})
\quand
\varphi^+(\mGQ_{\lambda\doublebar\mu}) = \varphi(\mGQ_{\tilde\lambda\doublebar\mu})
\ee
for all strict partitions $\lambda$ and $\mu$. Consequently, it holds that:
\ben
\item[(a)] $\varphi^+(\mGP_{n})=1$ and $\varphi^+(\mGQ_{n}) = 1$ for all $n\in\PP$.
\item[(b)] $\varphi^+$ is a $\mGP$- or $\mGQ$-positive specialization if and only if $\varphi$ has the same property.
\een
\end{proposition}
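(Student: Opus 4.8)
The content of the proposition is the pair of identities in \eqref{+prop-eq}; once these are established, parts (a) and (b) are immediate. My plan is to expand $\varphi^+ = \wone \sqcup \varphi$ using the coproduct formula \eqref{delta-eq} specialized to $\beta = -1$. Writing the union out gives
\[
\varphi^+(\mGP_{\lambda\doublebar\mu}) = \sum_{\mu\subseteq\kappa\subseteq\lambda} \wone(\mGP_{\kappa\doublebar\mu})\,\varphi(\mGP_{\lambda\doublebar\kappa}),
\]
and, since $\mGammaP$ and $\mGammaQ$ are cocommutative sub-bialgebras of $\mGamma$, I may equally use the swapped form of the coproduct and instead evaluate $\wone$ on the factor $\mGP_{\lambda\doublebar\kappa}$, obtaining $\varphi^+(\mGP_{\lambda\doublebar\mu}) = \sum_{\kappa} \wone(\mGP_{\lambda\doublebar\kappa})\,\varphi(\mGP_{\kappa\doublebar\mu})$.

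The key input is Corollary~\ref{1var-cor}. Because $\wone = \wphi_{(1,0,0,\dots)}$ is evaluation in the single variable $x = 1$ and $\beta = -1$ forces $-\tfrac{1}{\beta} = 1$, we have $\wone(\mGP_{\lambda\doublebar\kappa}) = \mGP_{\lambda\doublebar\kappa}(-\tfrac{1}{\beta})$, which by Corollary~\ref{1var-cor} vanishes unless $\kappa = \tilde\lambda$ and equals $(-\tfrac1\beta)^{\lambda_1} = 1$ when $\kappa = \tilde\lambda$. Hence exactly one term survives and $\varphi^+(\mGP_{\lambda\doublebar\mu}) = \varphi(\mGP_{\tilde\lambda\doublebar\mu})$; the verbatim argument with $Q$ in place of $P$ gives the second identity in \eqref{+prop-eq}. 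The boundary case $\mu\not\subseteq\tilde\lambda$ is consistent because both sides then vanish by the convention $\mGP_{\tilde\lambda\doublebar\mu} = 0$. Equivalently, and perhaps more transparently, one can use the variable-splitting description of the union, $(\wone\sqcup\varphi)(f) = \varphi\big(f(1,x_1,x_2,\dots)\big)$, together with Corollary~\ref{1xx-cor} at $\beta = -1$, which states precisely that $\mGP_{\lambda\doublebar\mu}(1,x_1,x_2,\dots) = \mGP_{\tilde\lambda\doublebar\mu}$.

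Granting \eqref{+prop-eq}, the consequences follow quickly. For (a) I take $\lambda = (n)$ and $\mu = \emptyset$, so that $\tilde\lambda = \emptyset$ and $\varphi^+(\mGP_n) = \varphi(\mGP_\emptyset) = \varphi(1) = 1$, with the $\mGQ$ statement identical. For (b), the forward direction is immediate: if $\varphi$ is $\mGP$-positive then $\varphi(\mGP_{\tilde\lambda\doublebar\mu}) \geq 0$ for all strict shapes, so $\varphi^+(\mGP_{\lambda\doublebar\mu}) \geq 0$ for all $\lambda,\mu$ by \eqref{+prop-eq}. For the converse, given strict partitions $\mu\subseteq\nu$ I choose $\lambda = (\nu_1+1, \nu_1, \nu_2, \dots)$, which is strict and satisfies $\tilde\lambda = \nu$; then \eqref{+prop-eq} gives $\varphi(\mGP_{\nu\doublebar\mu}) = \varphi^+(\mGP_{\lambda\doublebar\mu}) \geq 0$, so $\varphi$ is $\mGP$-positive. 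The $\mGQ$ case is the same.

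The proposition is in essence a corollary of the single-variable evaluations in Corollaries~\ref{1var-cor} and \ref{1xx-cor}, so there is no serious obstacle. The one point requiring care is the choice of tensor factor on which $\wone$ acts: only the arrangement in which $\wone$ is applied to $\mGP_{\lambda\doublebar\kappa}$ collapses the coproduct sum to the single term $\kappa = \tilde\lambda$, and this is exactly what cocommutativity of $\mGammaP$ and $\mGammaQ$ (equivalently, the symmetry of the variable-splitting) supplies.
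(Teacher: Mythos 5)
Your proof is correct and takes essentially the same route as the paper: the identities \eqref{+prop-eq} are obtained from Corollary~\ref{1var-cor} combined with the coproduct formula \eqref{delta-eq} (precisely the two ingredients the paper's proof cites), and parts (a) and (b) follow as you describe. Your write-up in fact makes explicit two points the paper leaves implicit---the cocommutativity swap ensuring $\wone$ is evaluated on the factor $\mGP_{\lambda\doublebar\kappa}$ so that the sum collapses to $\kappa=\tilde\lambda$, and the choice $\lambda=(\nu_1+1,\nu_1,\nu_2,\dots)$ in the converse of (b)---while your forward direction of (b) uses \eqref{+prop-eq} directly where the paper instead invokes closure of positive specializations under unions; these are only cosmetic differences.
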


\begin{proof}
The identities \eqref{+prop-eq} follow from Corollary~\ref{1var-cor} and \eqref{delta-eq}.
For part (a), note that if $\lambda=(n)$ and $\mu=\emptyset$ then $\tilde\lambda\doublebar\mu = \emptyset\doublebar\emptyset$,
and we have $\mGP_{ \emptyset\doublebar\emptyset} = \mGQ_{ \emptyset\doublebar\emptyset}=1$ and $\varphi(1)=1$.
For part (b), note that if $\varphi$ is positive then $\varphi^+$ is a union of positive specializations, and hence positive,
while if $\varphi^+$ is positive then \eqref{+prop-eq} implies that $\varphi$ is positive.
\end{proof}

Corollaries~\ref{1xx-cor} and \ref{ns-cor} specialize when $\beta=-1$ to the following results.
%For a strict partition $\lambda$ let $\ell(\lambda) = |\{ i \in \PP : \lambda_i>0\}|$.
For $f \in \mGamma$ let $f(1^n)$ denote the variable substitution $f(a_1,a_2,\dots,a_n)$ where $a_1=a_2=\dots=a_n=1$.

\begin{corollary} 
If $\lambda$ and $\mu$ are strict partitions and $n \in \NN$ then
\[
\mGP_{\lambda\doublebar \mu}(1^n)=\mGQ_{\lambda\doublebar \mu}(1^n) = \begin{cases}
1&\text{if }\mu=(\lambda_{n+1},\lambda_{n+2},\lambda_{n+3},\dots) \\
0 &\text{otherwise}.
\end{cases}
\]
\end{corollary}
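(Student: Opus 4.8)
The plan is to read off the claim as the $\beta=-1$ specialization of Corollary~\ref{1xx-cor}, iterated $n$ times, exactly as Corollary~\ref{ns-cor} was obtained. The key observation is that when $\beta=-1$ we have $-\tfrac1\beta=1$, so $f(1^n)$ is precisely the substitution $f(-\tfrac1\beta,\dots,-\tfrac1\beta,0,0,\dots)$ appearing in those corollaries, and moreover the scalar factor $(-\tfrac1\beta)^{\lambda_1}=1^{\lambda_1}=1$ becomes trivial.

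First I would dispose of the degenerate cases. If $\mu\not\subseteq\lambda$ then $\mGP_{\lambda\doublebar\mu}=\mGQ_{\lambda\doublebar\mu}=0$ by convention, and the condition $\mu=(\lambda_{n+1},\lambda_{n+2},\dots)$ cannot hold since it would force $\mu\subseteq\lambda$; so both sides vanish. I would therefore assume $\mu\subseteq\lambda$ and induct on $n$. For the base case $n=0$, the substitution sets all variables to $0$, so $\mGP_{\lambda\doublebar\mu}(0,0,\dots)$ equals the degree-zero (counit) part of $\mGP_{\lambda\doublebar\mu}$; by the set-valued tableau definition this is $1$ when $\lambda=\mu$ (the empty shifted skew shape, realized by the empty tableau) and $0$ otherwise, matching the claimed condition $\mu=(\lambda_1,\lambda_2,\dots)=\lambda$.

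For the inductive step with $n\geq 1$, if $\lambda=\emptyset$ then $\mu=\emptyset$ and $\mGP_{\emptyset\doublebar\emptyset}=\mGQ_{\emptyset\doublebar\emptyset}=1$ is constant, so the value is $1$ while the condition $\mu=(\lambda_{n+1},\dots)=\emptyset$ holds. If instead $\lambda\neq\emptyset$, I would apply Corollary~\ref{1xx-cor} with $\beta=-1$ to peel off the first variable:
\[
\mGP_{\lambda\doublebar\mu}(1^n)=\mGP_{\lambda\doublebar\mu}(1,1^{n-1})=1^{\lambda_1}\,\mGP_{\tilde\lambda\doublebar\mu}(1^{n-1})=\mGP_{\tilde\lambda\doublebar\mu}(1^{n-1}),
\]
and likewise for $\mGQ$. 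Applying the inductive hypothesis to $\tilde\lambda=(\lambda_2,\lambda_3,\dots)$ then gives the value $1$ exactly when $\mu=(\tilde\lambda_n,\tilde\lambda_{n+1},\dots)=(\lambda_{n+1},\lambda_{n+2},\dots)$, which is the desired condition. The $Q$-case runs identically since Corollary~\ref{1xx-cor} treats both functions simultaneously.

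The only real obstacle is bookkeeping: each peeling replaces $\lambda$ by $\tilde\lambda$ and decrements $n$, so one must check that after $n$ steps the surviving tail $(\lambda_{n+1},\lambda_{n+2},\dots)$ is what gets compared against $\mu$ at the counit level, and that the index shift $\tilde\lambda_j=\lambda_{j+1}$ is applied correctly. I expect no genuine difficulty beyond verifying that Corollary~\ref{1xx-cor} is applicable at each stage (it needs only $\lambda\neq\emptyset$, the empty case being handled separately) and confirming the constant-term evaluation in the base case.
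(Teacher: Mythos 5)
Your proposal is correct and is essentially the paper's own argument: the paper states this corollary as the $\beta=-1$ specialization of Corollary~\ref{1xx-cor} (iterated, with the trivial base case), which is exactly your induction peeling off one variable at a time using $-\tfrac{1}{\beta}=1$ and $1^{\lambda_1}=1$. The only nitpick is that your induction statement should formally include the degenerate case $\mu\not\subseteq\lambda$ (both sides zero), since after peeling one may have $\mu\not\subseteq\tilde\lambda$; your disposal of that case at the outset handles this with no real change.
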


%\begin{proof}
%If $n=0$ then $\mGP_{\lambda\doublebar \mu}(1^n) = \wzero(\mGP_{\lambda\doublebar \mu})$ which is $1$ if $\lambda=\mu=\emptyset$ and $0$ otherwise.
%If $n>0$ then 
%Lemma~\ref{aaa-lem} and Proposition~\ref{+prop}
%imply
%that $\mGP_{\lambda\doublebar \mu}(1^n) = \wone^n(\GP_{\lambda\doublebar \mu}) =  \wone^{n-1}(\mGP_{\tilde\lambda\doublebar \mu}) = 
%\mGP_{\tilde\lambda\doublebar \mu}(1^{n-1})$
%and the desired result follows by induction on $n$. The identity for $\mGQ_{\lambda\doublebar \mu}(1^n)$ is derived similarly.
%\end{proof}

%The following corollary is a special case of \cite[Thm.~3.2]{NobukawaShimazaki} with $\beta=-1$.
\begin{corollary}\label{ns-cor2}
If $\lambda$ is a strict partition then
$\mGP_{\lambda }(1^n)=\mGQ_{\lambda}(1^n)=
\begin{cases} 1 & \text{if }n\geq \ell(\lambda) \\
0 &\text{if }n< \ell(\lambda).
\end{cases}$
\end{corollary}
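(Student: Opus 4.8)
The plan is to obtain both equalities by iterating Corollary~\ref{1xx-cor} after specializing $\beta=-1$, exactly as in the derivation of Corollary~\ref{ns-cor}. When $\beta=-1$ we have $-\tfrac{1}{\beta}=1$, so Corollary~\ref{1xx-cor} (taken with $\mu=\emptyset$) reads
\[
\mGP_{\lambda}(1,x_1,x_2,x_3,\dots) = \mGP_{\tilde\lambda}
\quand
\mGQ_{\lambda}(1,x_1,x_2,x_3,\dots) = \mGQ_{\tilde\lambda}
\]
for every nonempty strict partition $\lambda$, since the prefactor $(-\tfrac{1}{\beta})^{\lambda_1}$ equals $1$. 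In other words, setting a single variable equal to $1$ deletes the largest part of the indexing partition and leaves the coefficient unchanged.

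First I would treat $\mGP$; the argument for $\mGQ$ is identical. Write $\ell=\ell(\lambda)$ and substitute the $n$ variables equal to $1$ one at a time. If $n\geq \ell$, then after $\ell$ applications of the displayed identity the index is reduced to $\emptyset$, and since $\mGP_{\emptyset}=1$ is constant the remaining $n-\ell$ substitutions (and all subsequent zeros) have no effect, giving $\mGP_{\lambda}(1^n)=1$. If instead $n<\ell$, then after $n$ applications we arrive at $\mGP_{\kappa}$ where $\kappa=(\lambda_{n+1},\lambda_{n+2},\dots)$ is a nonempty strict partition, and it remains only to set all remaining variables to $0$. Because every set-valued tableau $T$ counted by $\mGP_\kappa$ contributes a monomial $x^T$ of degree $|T|\geq |\kappa|\geq 1$, the power series $\mGP_\kappa$ has vanishing constant term, so $\mGP_{\kappa}(0,0,\dots)=0$ and hence $\mGP_{\lambda}(1^n)=0$.

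The only point requiring care is the bookkeeping of where the iteration terminates: Corollary~\ref{1xx-cor} applies only while the current index is nonempty, so one must stop peeling parts once $\lambda$ has been exhausted (in the case $n\geq\ell$) rather than attempting a further, inapplicable reduction, and in the complementary case one must separately observe that $\mGP_\kappa$ and $\mGQ_\kappa$ have zero constant term for $\kappa\neq\emptyset$. Neither step is substantial. As a cross-check, the positive case $n\geq \ell$ can alternatively be read off directly from Corollary~\ref{ns-cor} with $-\tfrac{1}{\beta}=1$, which already records the stabilized value $(-\tfrac{1}{\beta})^{|\lambda|}=1$.
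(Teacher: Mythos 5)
Your proof is correct and takes essentially the same route as the paper: the paper obtains this corollary (together with the unlabeled one preceding it) precisely by specializing Corollary~\ref{1xx-cor} at $\beta=-1$ and iterating, just as you do, with the $n<\ell(\lambda)$ case finished by the vanishing of the constant term of $\mGP_\kappa$ for $\kappa\neq\emptyset$. Your bookkeeping remarks (stopping the iteration at $\emptyset$, and treating the trailing zero substitutions separately) are exactly the details the paper leaves implicit.
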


Taking the limiting case of the preceding corollary gives the following:

\begin{corollary}
There exists a specialization $\one:\mGamma\to\RR$
with $\one(\mG_\lambda)=\one(\mGP_\mu)= \one(\mGQ_\mu)=1$ for all partitions $\lambda$ and all strict partitions $\mu$.
\end{corollary}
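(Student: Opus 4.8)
The plan is to realize $\one$ as the pointwise limit of the specializations $\wphi_{(1^n)}=\wone^{\sqcup n}$ (the $n$-fold union of $\wone$) as $n\to\infty$. Since $\max(1^n)=1\le 1$, each $\wphi_{(1^n)}$ is a well-defined $\mG$-positive specialization of $\mGamma$ (see the criterion recalled at the start of Section~\ref{signed-sect}), and by definition it agrees on every $f\in\mGamma$ with the substitution $\wphi_{(1^n)}(f)=f(1^n)$. Moreover $\wphi_{(1^n)}=\wone^{\sqcup n}$ because the union decomposition $\phi_a=\phi_{a_1}\sqcup\phi_{a_2}\sqcup\cdots$ for $a=(1^n)$ extends to $\mGamma$, with each factor $\phi_1$ extending to $\wone$.

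First I would show that for each fixed partition $\lambda$ the sequence $n\mapsto\wphi_{(1^n)}(\mG_\lambda)=\mG_\lambda(1^n)$ stabilizes at the value $1$. This is where \cite[Thm.~6.8]{Yel20}, the $\mG$-analogue of Proposition~\ref{+prop}, is needed: iterating its shift identity shows that applying $\wone\sqcup(-)$ to the counit repeatedly strips off the parts of $\lambda$ one at a time, so that $\wone^{\sqcup n}(\mG_\lambda)=1$ once $n$ is large enough for every part of $\lambda$ to have been removed. Consequently the limit $\one(f):=\lim_{n\to\infty}\wphi_{(1^n)}(f)$ exists for every element of the spanning set $\{\mG_\lambda\}$, and hence—since each $f\in\mGamma$ is a finite $\RR$-combination of $\mG_\lambda$'s—for every $f\in\mGamma$, with $\one(\mG_\lambda)=1$.

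Next I would check that $\one$ is a specialization. Because the limit $\lim_n\wphi_{(1^n)}(f)$ now exists for \emph{all} $f\in\mGamma$ and each $\wphi_{(1^n)}$ is an algebra morphism, unitality and multiplicativity pass to the limit: $\one(1)=\lim_n\wphi_{(1^n)}(1)=1$, and $\one(fg)=\lim_n\wphi_{(1^n)}(f)\,\wphi_{(1^n)}(g)=\one(f)\,\one(g)$, using that both factor-limits converge. Thus $\one:\mGamma\to\RR$ is a genuine algebra morphism.

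Finally, since $\mGammaQ\subsetneq\mGammaP\subsetneq\mGamma$ are subalgebras, I would evaluate $\one$ on the shifted bases by passing the limit through. For any strict partition $\mu$, Corollary~\ref{ns-cor2} gives $\wphi_{(1^n)}(\mGP_\mu)=\mGP_\mu(1^n)=1$ and $\wphi_{(1^n)}(\mGQ_\mu)=\mGQ_\mu(1^n)=1$ for all $n\ge\ell(\mu)$, so letting $n\to\infty$ yields $\one(\mGP_\mu)=\one(\mGQ_\mu)=1$, which completes the argument. The main obstacle is the first step: one must guarantee that the limit exists on all of $\mGamma$—not merely on the subspaces $\mGammaP$ and $\mGammaQ$ where Corollary~\ref{ns-cor2} already gives stabilization—and this is exactly what the $\mG_\lambda(1^n)$ stabilization from \cite[Thm.~6.8]{Yel20} provides. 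Once existence of the limit everywhere is secured, the algebra-morphism property and the evaluations on $\mGP_\mu$ and $\mGQ_\mu$ are routine.
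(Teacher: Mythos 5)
Your proposal is correct and takes essentially the same approach as the paper: the paper's proof obtains the specialization $\one(\mG_\lambda)=\lim_{n\to\infty}\mG_\lambda(1^n)=1$ by citing \cite[Rem.~6.10]{Yel20} directly (the fact your first two paragraphs re-derive from \cite[Thm.~6.8]{Yel20} via the iterated shift identity and a pointwise-limit argument), and then evaluates on the shifted functions using Corollary~\ref{ns-cor2}, exactly as you do.
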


\begin{proof}
There exists a specialization $\one : \mGamma\to\RR$ with the formula $\one(\mG_\lambda) = \lim_{n\to\infty} \mG_\lambda(1^n)=1$ by \cite[Rem.~6.10]{Yel20}. This specialization also has $\one(\mGP_\mu)= \one(\mGQ_\mu)=1$ by Corollary~\ref{ns-cor2}.
\end{proof}

\subsection{Harmonic functions}\label{harmonic-sect}

In this final section we describe some applications.
Suppose $\cP$ is a directed graph with a unique source vertex $\zero$,
in which
every vertex has finite out-degree, such that
 there is a finite path from $\zero$ to any other vertex $\lambda \in \cP$.
We write $\lambda \to \nu$ if there is an edge in $\cP$ from a vertex $\lambda \in \cP$ to $\nu \in \cP$.
A function $\varphi : \cP \to \RR_{\geq 0}$ is \defn{harmonic}
if 
\[\varphi(\zero)=1\quand \varphi(\lambda) = \sum_{\substack{ \nu \in \cP \\ \lambda \to \nu}} \varphi(\nu).\]
Let $H(\cP)$ be the set of harmonic functions $\cP \to \RR_{\geq 0}$.
This set is \defn{convex} since if $\varphi_1,\varphi_2 \in H(\cP)$ and $c \in [0,1]$ 
then $c \varphi_1 + (1-c) \varphi_2 \in H(\cP)$.
 Let $\partial H(\cP)$ be the set \defn{extreme points} $\varphi \in H(\cP)$ 
 that cannot be expressed as a convex linear combination of harmonic functions with all nonzero coefficients.

\begin{lemma} \label{A-lem}
Suppose there exists  an $\RR$-algebra $A$ with basis $\{ a_\lambda : \lambda \in \cP\}$
such that 
\ben
\item[(P1)] it holds that $a_\zero = 1$ and $a_\lambda a_\mu \in A_+ :=  \RR_{\geq 0} \spanning\{ a_\nu : \nu \in \cP\}$ for all $\lambda,\mu \in \cP$, and 
\item[(P2)] there exists an index $\one \in \cP$ such that $\ds a_\one a_\lambda = \sum_{\substack{\nu \in \cP \\ \lambda\to\nu}} a_\nu$ for each $\lambda \in \cP$.
\een
Fix a map $\varphi : \cP\to \RR$ and let $\rho_\varphi : A \to \RR$ be the linear map with $\rho_\varphi(a_\lambda) = \varphi(\lambda)$.
Then:
\ben
\item[(a)]   $\varphi \in H(\cP)$ if and only if $\rho_\varphi(a_\lambda)\geq0$, $\rho_\varphi(a_\zero)=1$, and $\rho_\varphi(a_\one a_\lambda) =\rho_\varphi(a_\lambda)$
for all $\lambda \in \cP$.

\item[(b)]   $\varphi \in \partial H(\cP)$ if and only if $\rho_\varphi$ is a specialization  
with $\rho_\varphi(a_\lambda)\geq0$ for all $\lambda \in \cP$ and $\rho_\varphi(a_\one)= 1$.

\een
\end{lemma}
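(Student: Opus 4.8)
The overall plan is to dispatch (a) by directly unpacking the definitions, and to prove (b) by singling out the multiplicative functionals among all harmonic ones through a canonical convex decomposition, with the reverse implication resting on a compactness argument. For part (a) I would unpack both sides: if $\varphi \in H(\cP)$ then $\rho_\varphi(a_\lambda) = \varphi(\lambda) \ge 0$ and $\rho_\varphi(a_\zero) = \varphi(\zero) = 1$, while (P2) with linearity gives $\rho_\varphi(a_\one a_\lambda) = \sum_{\lambda\to\nu}\rho_\varphi(a_\nu) = \sum_{\lambda\to\nu}\varphi(\nu) = \varphi(\lambda) = \rho_\varphi(a_\lambda)$. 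Running the same computation in reverse shows the three stated conditions force $\varphi(\zero)=1$, nonnegativity, and the harmonicity relation, so $\varphi \in H(\cP)$; this step is routine bookkeeping.

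For (b) I would first record structural facts valid for every $\rho = \rho_\varphi$ with $\varphi \in H(\cP)$; here I use that $A$ is commutative, as it is in all the intended applications. Iterating $\rho(a_\one x) = \rho(x)$ gives $\rho(a_\one^{\,d}) = 1$, and expanding $a_\one^{\,d} = \sum_\kappa p_d(\kappa)\,a_\kappa$, with $p_d(\kappa) \in \NN$ the number of length-$d$ paths $\zero \to \kappa$, yields $\sum_\kappa p_d(\kappa)\rho(a_\kappa) = 1$; since every vertex is reachable, this forces $0 \le \rho(a_\kappa) \le 1$. Next, the functional $\tau(x) := \rho(a_\mu x)$ is nonnegative on $A_+$, satisfies $\tau(a_\one x) = \tau(x)$, and has $\tau(1) = \rho(a_\mu)$; so if $\rho(a_\mu)=0$ then $0 = \tau(a_\one^{\,d}) = \sum_\kappa p_d(\kappa)\tau(a_\kappa)$ forces $\rho(a_\mu a_\lambda) = 0$ for all $\lambda$. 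Finally, for $\mu$ with $\rho(a_\mu) > 0$ the conditioned functional $\rho^\mu(x) := \rho(a_\mu x)/\rho(a_\mu)$ again lies in $H(\cP)$, and multiplying $a_\one^{\,d} = \sum_\kappa p_d(\kappa)a_\kappa$ by $a_\lambda$ and applying $\rho$ produces the convex decomposition $\rho = \sum_{\kappa} p_d(\kappa)\rho(a_\kappa)\,\rho^\kappa$ into harmonic functionals with weights summing to $1$.

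Granting these facts, the implication extreme $\Rightarrow$ multiplicative is immediate. Given $\mu$ with $\rho(a_\mu) > 0$, choose $d$ with $p_d(\mu) \ge 1$, so that $\rho^\mu$ has positive weight in the decomposition; extremality then forces every $\rho^\kappa$ of positive weight, in particular $\rho^\mu$, to equal $\rho$, that is $\rho(a_\mu a_\lambda) = \rho(a_\mu)\rho(a_\lambda)$ for all $\lambda$. Combined with the vanishing $\rho(a_\mu a_\lambda) = 0$ obtained above when $\rho(a_\mu) = 0$, this gives $\rho(a_\mu a_\lambda) = \rho(a_\mu)\rho(a_\lambda)$ on all basis pairs, so $\rho_\varphi$ is an algebra morphism; positivity and $\rho(a_\one)=1$ are automatic for a harmonic $\varphi$ (take $\lambda = \zero$ in the harmonicity relation).

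The reverse implication, multiplicative $\Rightarrow$ extreme, is where I expect the main obstacle. That such a $\rho$ is harmonic is easy, since $\rho(a_\one a_\lambda) = \rho(a_\one)\rho(a_\lambda) = \rho(a_\lambda)$ reduces to part (a). Extremality itself, however, cannot be had from a naive split: positivity is assumed only on the cone $A_+$, not on squares, so there is no Cauchy--Schwarz inequality for $\rho$ and no way to peel off $\rho^\mu$ while keeping a positive complement. My plan is to use compactness: the bounds $0 \le \rho(a_\kappa) \le 1$ realize $H(\cP)$ as a closed, hence compact and metrizable, convex subset of $[0,1]^{\cP}$, so Choquet's theorem represents any harmonic $\rho$ as a barycenter $\rho = \int_{\partial H(\cP)} \sigma \, d\nu(\sigma)$ of extreme points, which by the previous paragraph are all multiplicative. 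If $\rho$ is itself multiplicative, then for each $\lambda$ one has $\int \sigma(a_\lambda)^2 \, d\nu = \rho(a_\lambda)^2 = \left(\int \sigma(a_\lambda)\,d\nu\right)^2$, so $\sigma(a_\lambda)$ has zero $\nu$-variance; since a character is determined by its basis values, $\nu$ must be a point mass and $\rho$ is extreme. The delicate point I would treat most carefully is exactly this passage through Choquet theory (verifying compactness and concentration of the representing measure on the extreme boundary); an alternative would be the ergodic method, deducing extremality of the product-form harmonic function from a law of large numbers for the level measures $\kappa \mapsto p_d(\kappa)\rho(a_\kappa)$ as $d \to \infty$.
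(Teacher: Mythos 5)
Your proof is correct (granting the standing assumption, discussed below, that $A$ is commutative), but it takes a genuinely different route from the paper's for the hard direction. The paper gives no self-contained argument: it isolates the domination property that (P2) yields $(a_\one)^n - a_\lambda \in A_+$ when $n$ is the length of a path from $\zero$ to $\lambda$, and then defers entirely to the Vershik--Kerov ``ring theorem'' machinery by citation --- either as a special case of \cite[Ex.~4.2]{BO17}, or by repeating the proof of \cite[Thm.~5.2]{Yel20} verbatim with $\widetilde\YY$, $\widetilde G_{(1)}$, $\widetilde G_\lambda$, $\Gamma_+$ replaced by $\cP$, $a_\one$, $a_\lambda$, $A_+$. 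Your first direction (extreme $\Rightarrow$ multiplicative), via the conditioned functionals $\rho^\kappa = \rho(a_\kappa\,\cdot)/\rho(a_\kappa)$ and the finite convex decomposition coming from $a_\one^d = \sum_\kappa p_d(\kappa)\,a_\kappa$, is essentially the same as in those sources. Where you diverge is multiplicative $\Rightarrow$ extreme: the cited arguments are elementary and order-theoretic, with the domination $a_\lambda \le (a_\one)^n$ playing the role of an Archimedean axiom, whereas you realize $H(\cP)$ as a compact convex metrizable subset of $[0,1]^{\cP}$ and apply Choquet's theorem plus the zero-variance trick. Your argument is sound: the barycenter identity does apply to $\sigma \mapsto \sigma(a_\lambda a_\mu)$ because products of basis elements are finite linear combinations of basis elements, hence continuous linear functionals in the product topology; and $\nu(\{\rho\})=1$ together with $\nu(\partial H(\cP))=1$ indeed forces $\rho \in \partial H(\cP)$. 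Your route buys a conceptually transparent proof at the cost of importing Choquet theory; the paper's route buys an elementary, measure-theory-free argument that transfers unchanged from the published case.

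Two points you should make explicit. First, your proof (like the cited ones) needs commutativity of $A$ --- e.g., harmonicity of $\rho^\kappa$ requires $\rho(a_\kappa a_\one a_\lambda) = \rho(a_\kappa a_\lambda)$, which uses that $a_\one$ commutes with $a_\kappa$ --- and the lemma as stated does not hypothesize it; since every algebra to which the paper applies the lemma is commutative, this is an omission in the statement rather than a defect of your argument, but it should be recorded as a hypothesis. Second, metrizability of $[0,1]^{\cP}$, needed for Choquet's theorem to give a measure concentrated on the extreme points themselves (a $G_\delta$ set) rather than on their closure, requires $\cP$ to be countable; this does follow from the hypotheses, since each level of $\cP$ is finite (finite out-degrees) and every vertex lies in some level, but it deserves a sentence.
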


%In other words, $\varphi \mapsto \rho_\varphi$ is a bijection from $\partial H(\cP)$
%to specializations of $A$ that are positive with respect to $\{ a_\lambda : \lambda \in \cP\}$
%and \defn{normalized} in the sense that they send $a_\one\mapsto 1$.

Notice that condition (P2) with $\lambda=\zero$ implies that $\zero\to\one$ is the unique edge with source $\zero$.

\begin{proof}
%\eric{todo}
This lemma may be viewed as a special case of \cite[Ex.~4.2]{BO17}.
Alternatively, write $\lambda \vdash n$ if the shortest path from $\zero$  to $\lambda\in\cP$
has $n$ edges. Then  condition (P2) implies that we have $(a_\one)^n - a_\lambda \in A_+$
and one may deduce the lemma by exactly repeating the proof of \cite[Thm.~5.2]{Yel20},
replacing the symbols $\tilde\YY$, $\tilde G_{(1)}$, $\tilde G_{\lambda}$, and $\Gamma_+$ used there by $\cP$, $a_\one$, $a_\lambda$, and $A_+$.
\end{proof}

One may use the preceding lemma to recover several results classifying the extreme  harmonic functions
on directed graphs associated to integer partitions.
Here are two classical examples. Let $\YY$ denote \defn{Young's lattice}, the directed graph whose vertices consist of all integer partitions $\lambda$
and whose edges have the form $\lambda \to \nu$ whenever $\lambda\subseteq \nu$ and $|\nu|-|\lambda|=1$.
A specialization $\rho$ of $\Sym$ is \defn{normalized} if $\rho(h_1)=1$ (recall that $s_{(1)}=h_1$).
 
\begin{corollary}[\cite{VershikKerov}]
A map $\varphi : \YY \to \RR$
belongs to $\partial H(\YY)$ if and only if the linear map $\Sym\to\RR$ sending $s_\lambda \mapsto \varphi(\lambda)$
is a normalized Schur positive specialization.
\end{corollary}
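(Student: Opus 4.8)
The plan is to apply Lemma~\ref{A-lem} directly, taking $\cP = \YY$, the $\RR$-algebra $A = \Sym$, and the basis $a_\lambda = s_\lambda$ indexed by all partitions, with source vertex $\zero = \emptyset$ and distinguished index $\one = (1)$. Under this dictionary the linear map $\Sym \to \RR$ sending $s_\lambda \mapsto \varphi(\lambda)$ is precisely the map $\rho_\varphi$ of the lemma, so the corollary will follow once I verify hypotheses (P1) and (P2) and then translate the conclusion of part (b) into the language of Schur positive specializations.

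For (P1), I would note that $a_\zero = s_\emptyset = 1$ and that each product $s_\lambda s_\mu$ is an $\NN$-linear combination of Schur functions by the nonnegativity of the Littlewood--Richardson coefficients \cite[\S I.9]{Macdonald}; hence $s_\lambda s_\mu$ lies in the nonnegative span of the basis. For (P2), I would invoke Pieri's rule $s_{(1)} s_\lambda = \sum_\nu s_\nu$, where the sum runs over all partitions $\nu$ obtained from $\lambda$ by adding a single box \cite[\S I.5]{Macdonald}. The key observation is that these $\nu$ are exactly the vertices with $\lambda \to \nu$ in Young's lattice, since an edge of $\YY$ is by definition an inclusion $\lambda \subseteq \nu$ with $|\nu| - |\lambda| = 1$. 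Thus $s_{(1)} s_\lambda = \sum_{\lambda \to \nu} s_\nu$, which is (P2) with $\one = (1)$.

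With the hypotheses in place, Lemma~\ref{A-lem}(b) gives that $\varphi \in \partial H(\YY)$ if and only if $\rho_\varphi$ is a specialization (i.e.\ an algebra morphism) satisfying $\rho_\varphi(s_\lambda) \geq 0$ for all $\lambda$ together with $\rho_\varphi(s_{(1)}) = 1$. It remains only to recognize these two conditions. An algebra morphism $\Sym \to \RR$ that is nonnegative on every Schur function is exactly a Schur positive specialization: as recalled in Section~\ref{sym-sect}, positivity on the basis $\{s_\lambda\}$ is equivalent to positivity on all skew Schur functions, since each $s_{\lambda/\mu}$ is an $\NN$-linear combination of ordinary Schur functions. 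Finally $s_{(1)} = h_1$, so $\rho_\varphi(s_{(1)}) = 1$ is precisely the normalization condition recalled just above. This completes the identification and hence the proof.

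I expect essentially no serious obstacle here: the entire analytic and order-theoretic content is already packaged in Lemma~\ref{A-lem} (whose proof in turn rests on Yeliussizov's \cite[Thm.~5.2]{Yel20}). The only points requiring care are the verifications of (P1) and (P2), and in particular checking that the set of partitions appearing in Pieri's rule coincides exactly with the covering relation of $\YY$; everything else is a routine translation of vocabulary.
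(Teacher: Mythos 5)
Your proposal is correct and follows exactly the paper's argument: the paper likewise proves this corollary by applying Lemma~\ref{A-lem} with $\cP= \YY$, $A = \Sym$, and $a_\lambda = s_\lambda$, citing \eqref{G-pieri} at $\beta=0$ (which is the Pieri rule you invoke) for hypothesis (P2). Your additional verifications --- Littlewood--Richardson positivity for (P1) and the identification of basis-positivity with Schur positivity via the $\NN$-expansion of skew Schur functions --- are exactly the details the paper leaves implicit.
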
 

\begin{proof}
Apply Lemma~\ref{A-lem} (using \eqref{G-pieri} with $\beta=0$) when $\cP= \YY$, $A = \Sym$, and $a_\lambda = s_\lambda$.
\end{proof}

Let $\SY$ denote the shifted variant of Young's lattice, given by the directed graph whose vertices consist of all strict partitions $\lambda$
and whose edges have the form $\lambda \to \nu$ for all strict partitions $\lambda\subseteq \nu$ with $|\nu|-|\lambda|=1$.
A specialization $\varphi$ of $\SymSh$ is  \defn{normalized} if $ \varphi(h_1)=1$ (recall that $P_{(1)}=h_1$).
 
\begin{corollary}
A map $\varphi : \SY \to \RR_{\geq 0}$
belongs to $\partial H(\SY)$ if and only if the linear map $\SymSh\to\RR$ sending $P_\lambda \mapsto \varphi(\lambda)$
is a normalized Schur $P$-positive specialization.
\end{corollary}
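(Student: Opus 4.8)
The plan is to obtain this corollary as a direct instance of Lemma~\ref{A-lem}(b), exactly mirroring the proof of the preceding corollary for Young's lattice $\YY$ but with the shifted data substituted throughout. I would take $\cP = \SY$, the algebra $A = \SymSh$, and the basis $a_\lambda = P_\lambda$ indexed by strict partitions, with distinguished indices $\zero = \emptyset$ and $\one = (1)$. The only substantive work is to check that this choice satisfies hypotheses (P1) and (P2) of Lemma~\ref{A-lem}; once this is verified, part (b) of the lemma translates verbatim into the desired statement.

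For (P1), I would note that $P_\emptyset = 1$ and then invoke the nonnegativity of the multiplicative structure constants of $\SymSh$ in the Schur $P$-basis, so that $P_\lambda P_\mu \in \RR_{\geq 0}\spanning\{P_\nu\}$. This is precisely the $\beta = 0$ specialization of the fact recalled earlier in this section, namely that the structure constants of $\bGammaP$ all belong to $\NN[\beta]$: setting $\beta = 0$ turns these into nonnegative integers, which is exactly what (P1) requires.

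For (P2), I would set $a_\one = P_{(1)} = h_1$ and read off the Pieri rule \eqref{GP-pieri} at $\beta = 0$. Since $\bGP_1 = P_1$ in this case, the factor $\beta^{|\lambda|-|\mu|}$ kills every term except those with $|\lambda|-|\mu| = 1$, and the rook-strip condition is automatic when only a single box is added. Hence $P_{(1)}P_\mu = \sum_\nu P_\nu$, summed over all strict partitions $\nu \supseteq \mu$ with $|\nu|-|\mu| = 1$, which is exactly the sum over the out-edges $\mu \to \nu$ of $\SY$. This is (P2).

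With (P1) and (P2) in hand, Lemma~\ref{A-lem}(b) asserts that $\varphi \in \partial H(\SY)$ if and only if the linear map $\rho_\varphi : \SymSh \to \RR$ determined by $\rho_\varphi(P_\lambda) = \varphi(\lambda)$ is an algebra morphism with $\rho_\varphi(P_\lambda) \geq 0$ for all strict $\lambda$ and $\rho_\varphi(P_{(1)}) = 1$. The positivity condition is precisely Schur $P$-positivity (positivity relative to the basis $\{P_\lambda\}$), and $\rho_\varphi(P_{(1)}) = 1$ is the normalization $\varphi(h_1) = 1$ since $P_{(1)} = h_1$, which completes the argument. I do not anticipate a genuine obstacle: the whole content is the bookkeeping match between the edge structure of $\SY$ and the shifted Pieri rule, and the sole external input beyond Lemma~\ref{A-lem} is the already-cited $\NN[\beta]$-positivity of the Schur $P$ structure constants specialized at $\beta = 0$.
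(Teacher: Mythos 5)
Your proposal is correct and is essentially identical to the paper's proof, which likewise obtains the statement by applying Lemma~\ref{A-lem} with $\cP = \SY$, $A = \SymSh$, $a_\lambda = P_\lambda$, using the Pieri rule \eqref{GP-pieri} at $\beta = 0$ to verify (P2). You simply spell out the verification of (P1) and (P2) in more detail than the paper's one-line argument, and your readings of positivity and normalization match the paper's definitions exactly.
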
 

\begin{proof}
Apply Lemma~\ref{A-lem} (using \eqref{GP-pieri} with $\beta=0$) when $\cP= \SY$, $A = \SymSh$, and $a_\lambda = P_\lambda$.
\end{proof}

Yeliussizov \cite[\S5]{Yel20} considered the following \defn{filtered Young graph} $\widetilde\YY$,
whose vertices are the same as $\YY$ but which contains an edge $\lambda \to \nu$
whenever $\lambda\subseteq\nu$ and $\D_\nu\setminus \D_\lambda$ is a nonempty rook strip (that is, having at most
one position in each row and column).
This is part of the \defn{M\"obius deformation} of $\YY$ in the terminology of \cite{PP18}.

A specialization $\varphi$ of $\pGamma$ is  \defn{normalized} if $ \varphi(\pG_1)=1$.
The extreme harmonic functions on $\widetilde\YY$ are given as follows.

\begin{corollary}[\cite{Yel20}]
A map $\varphi : \widetilde\YY \to \RR_{\geq 0}$
belongs to $\partial H(\widetilde\YY)$ if and only if the linear map $\pGamma\to\RR$ sending $\pG_\lambda \mapsto \varphi(\lambda)$
is a normalized $\pG$-positive specialization.
\end{corollary}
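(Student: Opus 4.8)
The plan is to apply Lemma~\ref{A-lem} with $\cP = \widetilde\YY$, $A = \pGamma$, and $a_\lambda = \pG_\lambda$, taking the source to be $\zero = \emptyset$ and the distinguished index to be $\one = (1)$, so that $a_\zero = \pG_\emptyset = 1$ and $a_\one = \pG_{(1)} = \pG_1$. This mirrors the proofs of the two preceding corollaries (for $\YY$ and $\SY$), now carried out with $\beta = 1$ in place of $\beta = 0$.

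First I would check hypothesis (P1). The identity $\pG_\emptyset = 1$ holds since $\bG_0 = 1$. For the product condition, I would invoke the fact recalled in Section~\ref{unbounded-sect} that the multiplicative structure constants of $\bGamma$ lie in $\NN[\beta]$ \cite[Cors.~5.5 and 6.7]{Buch2002}; setting $\beta = 1$ then shows that each $\pG_\lambda \pG_\mu$ is an $\NN$-linear combination of the basis $\{\pG_\nu\}$, so $\pG_\lambda \pG_\mu \in A_+$.

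Next I would verify hypothesis (P2), which is the only place where the combinatorial structure of the graph enters. Setting $\beta = 1$ in the Pieri rule \eqref{G-pieri} gives
\[
\pG_1 \pG_\lambda = \sum_{\nu} \pG_\nu,
\]
where $\nu$ ranges over all partitions with $\lambda \subsetneq \nu$ such that $\D_\nu \setminus \D_\lambda$ is a rook strip. Because $\lambda \subsetneq \nu$ forces this rook strip to be nonempty, the indexing set is exactly $\{\nu : \lambda \to \nu\}$ in $\widetilde\YY$, which is precisely the content of (P2) with $\one = (1)$.

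With both hypotheses established, Lemma~\ref{A-lem}(b) states that $\varphi \in \partial H(\widetilde\YY)$ if and only if $\rho_\varphi$ is an algebra morphism satisfying $\rho_\varphi(\pG_\lambda) \geq 0$ for all $\lambda$ and $\rho_\varphi(\pG_1) = 1$. The last step is to recognize this condition as the one claimed: the normalization $\rho_\varphi(\pG_1) = 1$ is by definition normalization, and since $\beta = 1 \geq 0$, Proposition~\ref{ppp-prop} identifies an algebra morphism of $\pGamma$ with $\rho_\varphi(\pG_\lambda) \geq 0$ for all $\lambda$ as exactly a $\pG$-positive specialization. I expect no serious difficulty here; the only step requiring genuine verification is the edge-matching in (P2), while the translation from basis-positivity to $\pG$-positivity via Proposition~\ref{ppp-prop} is the one conceptual point, and everything else is routine bookkeeping.
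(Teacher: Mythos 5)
Your proposal is correct and follows exactly the paper's own route: the paper proves this corollary by applying Lemma~\ref{A-lem} with $\cP=\widetilde\YY$, $A=\pGamma$, $a_\lambda=\pG_\lambda$, citing the Pieri rule \eqref{G-pieri} for hypothesis (P2). Your additional details---(P1) via the $\NN[\beta]$ structure constants and the passage from basis-positivity to $\pG$-positivity via Proposition~\ref{ppp-prop}---are precisely the verifications the paper leaves implicit.
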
 

\begin{proof}
In view of \eqref{G-pieri} this follows by 
applying Lemma~\ref{A-lem} with $\cP= \widetilde\YY$, $A = \pGamma$, and $a_\lambda = \pG_\lambda$.
\end{proof}

We introduce a shifted variant: let $\widetilde\SY$ be the graph
with the same vertices as $\SY$ but with edges  $\lambda \to \nu$
whenever $\lambda\subseteq\nu$ are strict partitions such that $\SD_\nu\setminus \SD_\lambda$ is a nonempty rook strip.
A specialization $\varphi$ of $\pGammaP$ is  \defn{normalized} if $ \varphi(\pGP_1)=1$.

\begin{corollary}\label{main-new-cor}
A map $\varphi : \widetilde\SY \to \RR_{\geq 0}$
belongs to $\partial H(\widetilde\SY)$ if and only if the linear map $\pGammaP\to\RR$ sending $\pGP_\lambda \mapsto \varphi(\lambda)$
is a normalized $\pGP$-positive specialization.
\end{corollary}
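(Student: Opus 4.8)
The plan is to deduce this corollary from Lemma~\ref{A-lem}, in exactly the manner of the preceding corollary for $\widetilde\YY$, but transported to the shifted setting. Concretely, I would take $\cP = \widetilde\SY$, $A = \pGammaP$, and $a_\lambda = \pGP_\lambda$ for each strict partition $\lambda$, with source vertex $\zero = \emptyset$ and normalization index $\one = (1)$; note that $(1)$ is a strict partition and $\pGP_{(1)} = \pGP_1$.

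First I would verify hypothesis (P1). We have $a_\zero = \pGP_\emptyset = 1$, and the product $\pGP_\lambda\pGP_\mu$ is a nonnegative integer combination of the $\pGP_\nu$: this is the $\beta = 1$ specialization of the statement recalled in Section~\ref{unbounded-sect} that the multiplicative structure constants of $\bGammaP$ lie in $\NN[\beta]$. Hence $\pGP_\lambda\pGP_\mu \in A_+$ as required. Next I would verify hypothesis (P2) with $\one = (1)$. Setting $\beta = 1$ in the Pieri rule \eqref{GP-pieri} gives
\[ \pGP_1\,\pGP_\mu = \sum_\lambda \pGP_\lambda, \]
where $\lambda$ ranges over the strict partitions with $\lambda \supsetneq \mu$ for which $\SD_\lambda \setminus \SD_\mu$ is a rook strip. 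The rook-strip condition is precisely the edge relation defining $\widetilde\SY$, and every exponent $\beta^{|\lambda|-|\mu|}$ collapses to $1$ at $\beta = 1$, so the right-hand side is exactly $\sum_{\mu\to\nu}\pGP_\nu$, as (P2) demands.

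With (P1) and (P2) established, Lemma~\ref{A-lem}(b) asserts that $\varphi \in \partial H(\widetilde\SY)$ if and only if $\rho_\varphi$ is an algebra morphism satisfying $\rho_\varphi(\pGP_\lambda)\geq 0$ for all strict $\lambda$ together with $\rho_\varphi(\pGP_1) = 1$. To conclude, I would translate this into the language of the statement: the condition $\rho_\varphi(\pGP_1)=1$ is exactly normalization, and by Proposition~\ref{ppp-prop} (applied with $\beta = 1 \geq 0$) nonnegativity of $\rho_\varphi$ on the basis $\{\pGP_\lambda\}$ is equivalent to $\pGP$-positivity, i.e.\ positivity relative to all skew functions $\pGP_{\lambda\ss\mu}$. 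This yields precisely the claimed equivalence.

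The computation is essentially bookkeeping, so there is no serious obstacle; the one point that genuinely requires care is the collapse of the Pieri coefficients $\beta^{|\lambda|-|\mu|}$ to $1$ at $\beta = 1$, since this is what forces the correct graph to be $\widetilde\SY$ (whose edges allow arbitrary rook strips) rather than $\SY$ (whose edges add a single box). Verifying that the rook-strip edge set of $\widetilde\SY$ matches the support of $\pGP_1\pGP_\mu$ is therefore the heart of the argument; everything else is a direct transcription of the $\widetilde\YY$ proof with $\pG$, $\pGamma$, $\YY$ replaced by $\pGP$, $\pGammaP$, $\SY$.
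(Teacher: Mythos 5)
Your proof is correct and follows exactly the paper's argument: applying Lemma~\ref{A-lem} with $\cP = \widetilde\SY$, $A = \pGammaP$, $a_\lambda = \pGP_\lambda$, using the Pieri rule \eqref{GP-pieri} at $\beta=1$ for (P2). Your additional verifications of (P1) via the $\NN[\beta]$ structure constants and the translation to $\pGP$-positivity via Proposition~\ref{ppp-prop} are details the paper leaves implicit, and they are handled correctly.
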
 

\begin{proof}
In view of \eqref{GP-pieri} this follows 
from Lemma~\ref{A-lem} with $\cP= \widetilde\SY$, $A = \pGammaP$, and $a_\lambda = \pGP_\lambda$.
\end{proof}

For convenience, we mention this corollary of Theorem~\ref{main-thm2}. Recall that $\overline{x} = \frac{-x}{1+x}$.

\begin{corollary}
A specialization $\rho : \pGammaP \to \RR$ is normalized and $\pGP$-positive 
if and only if
there are real numbers 
$a=(1\geq a_1\geq a_2 \geq \dots\geq0)$ and $\gamma\geq 0$
with
$
\gamma =  \log 2 -\sum_{n=1}^\infty \log(1+a_n)
$
and
\[ \rho(1+ \pGP_1)^2 + 
\sum_{n=1}^\infty \rho(2\pGP_{n} + 3\pGP_{n+1} +\pGP_{n+2}) z^{n}
 =  4 e^{2\gamma z} \prod_{n=1}^\infty \frac{1-\overline{a_n} z}{1-a_n z}.
\]
In this case $\rho$ coincides with the restriction of $\wphi_a \sqcup \wpi_\gamma$.
\end{corollary}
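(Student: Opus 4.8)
The plan is to obtain this statement directly from Theorem~\ref{main-thm2} by imposing the normalization $\rho(\pGP_1)=1$; no new idea is needed beyond translating the constant $D$ and rewriting the generating function in the $\pGP$-basis.

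First I would relate normalization to the constant $D$ of Theorem~\ref{main-thm2}. Since $\bG_1=\bGP_1$ (noted just after the definition of the set-valued tableaux), setting $\beta=1$ gives $\pGP_1=\pG_1$, so $\rho$ is normalized exactly when $\rho(\pG_1)=1$, i.e.\ when $D=1+\rho(\pG_1)=2$. By Theorem~\ref{main-thm2} every $\pGP$-positive specialization is the restriction of some $\wphi_a\sqcup\wpi_\gamma$ with $D=e^{\gamma}\prod_{n=1}^\infty(1+a_n)$, so imposing $D=2$ and taking logarithms yields precisely $\gamma=\log 2-\sum_{n=1}^\infty\log(1+a_n)$. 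I would then observe that the bound $a_1\leq 1$ is automatic: as $\gamma\geq 0$ we have $\prod_{n=1}^\infty(1+a_n)=2e^{-\gamma}\leq 2$, and since each factor is at least $1$ this forces $1+a_1\leq 2$. For the converse, any $a$ and $\gamma$ meeting these constraints satisfy $\sum_{n=1}^\infty\log(1+a_n)=\log 2-\gamma<\infty$, so $\prod_{n=1}^\infty(1+a_n)$ converges and Theorem~\ref{main-thm2} ensures that $\wphi_a\sqcup\wpi_\gamma$ restricts to a $\pGP$-positive specialization, which is normalized because its value at $\pGP_1$ equals $D-1=1$.

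Next I would rewrite the generating function \eqref{main-thm2-eq}, which with $D=2$ becomes $\sum_{n\geq 0}\rho(\pGQ_n+\pGQ_{n+1})z^n=4e^{2\gamma z}\prod_{n=1}^\infty\frac{1-\overline{a_n}z}{1-a_n z}$, in terms of $\pGP$-functions. Using \eqref{GQ-GP-n-eq} at $\beta=1$, namely $\pGQ_n=2\pGP_n+\pGP_{n+1}$, gives $\pGQ_n+\pGQ_{n+1}=2\pGP_n+3\pGP_{n+1}+\pGP_{n+2}$ for $n\geq 1$, which accounts for the coefficients of $z^n$ in the claimed identity. For the constant term I would use $\pGQ_0=1$ and the relation $(\pGP_1)^2=\pGP_2$ to write $\pGQ_0+\pGQ_1=1+2\pGP_1+\pGP_2=(1+\pGP_1)^2$, so that $\rho(\pGQ_0+\pGQ_1)=\rho(1+\pGP_1)^2$. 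Substituting these expressions produces the displayed power series, and since its coefficients determine $\rho$ on the generators $\{\pGP_n\}$ of $\pGammaP$ they also identify $\rho$ with the restriction of $\wphi_a\sqcup\wpi_\gamma$, giving both directions of the equivalence.

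I do not expect a serious obstacle, since the corollary is a special case of Theorem~\ref{main-thm2}. The only points that demand care are the remark that $a_1\leq 1$ is forced by $\gamma\geq 0$, and the handling of the constant term, where the quadratic relation $(\pGP_1)^2=\pGP_2$ is what lets one recognize the value $\rho(1+\pGP_1)^2$.
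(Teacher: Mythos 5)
Your proposal is correct and follows exactly the route the paper intends: the paper states this result without proof as a direct consequence of Theorem~\ref{main-thm2}, and your two key steps---imposing $D=1+\rho(\pGP_1)=2$ (which forces $\gamma=\log 2-\sum_{n\geq1}\log(1+a_n)$ and, as you nicely observe, makes $a_1\leq 1$ automatic) and rewriting the generating function via $\pGQ_n+\pGQ_{n+1}=2\pGP_n+3\pGP_{n+1}+\pGP_{n+2}$ with constant term $(1+\rho(\pGP_1))^2$ using $\pGP_2=(\pGP_1)^2$---are precisely the manipulations already carried out inside the paper's proof of Theorem~\ref{main-thm2}. No genuinely different ideas are involved, so this is the same argument made explicit.
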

%\eric{todo: check}


\begin{thebibliography}{99}



\bibitem{BO17} A. Borodin and G. Olshanski, Representations of the infinite symmetric group, \emph{Cambridge Studies in Advanced Mathematics}, Vol. 160, Cambridge University Press, 2017.

\bibitem{Buch2002} A. S. Buch, A Littlewood-Richardson rule for the $K$-theory of Grassmannians, \emph{Acta Math.} \textbf{189} (2002), no. 1, 37--78.

\bibitem{BuchRavikumar} A. S. Buch and V. Ravikumar, Pieri rules for the $K$-theory of cominuscule Grassmannians, 
\emph{J. Reine Angew. Math.} \textbf{668} (2012), 109--132.



\bibitem{ChiuMarberg} Y.-C. Chiu and E. Marberg, Expanding $K$-theoretic Schur $Q$-functions,
\emph{Algebraic Combinatorics} \textbf{6} (2023), no. 6, 1419--1445.


\bibitem{Edrei} A. Edrei, On the generating functions of totally positive sequences II, \emph{J. Anal. Math.} \textbf{88} (1952), 104--109.

\bibitem{FK1994} S. Fomin and A. N. Kirillov, Grothendieck polynomials and the Yang-Baxter equation, Proceedings of the
Sixth Conference in Formal Power Series and Algebraic Combinatorics, DIMACS (1994), 183--190.




\bibitem{GrinbergReiner} D. Grinberg and V. Reiner, Hopf Algebras in Combinatorics, preprint (2014), {\tt arXiv:1409.8356v7}.




\bibitem{HIMN}
{T. Hudson, T. Ikeda, T. Matsumura and H. Naruse}, 
{Degeneracy loci classes in $K$-theory---Determinantal and Pfaffian formula}, 
\emph{Adv. Math.} \textbf{320} (2017), 115--156.



\bibitem{IkedaNaruse} T. Ikeda and H. Naruse, $K$-theoretic analogues of factorial Schur $P$- and $Q$-functions,
\emph{Adv. Math.} \textbf{243} (2013), 22--66.

\bibitem{VershikKerov} S. Kerov and A. Vershik, Characters, factor representations and K-functor of the infinite symmetric group, \emph{Operator Algebras and Group Representations, Vol. II} (Neptun, 1980), Monographs and Studies in Mathematics, Vol. 18, Pitman, Massachusetts, 1984, 23--32.

\bibitem{LS1983} A. Lascoux and M.-P. Sch\"utzenberger, Symmetry and flag manifolds, in: Invariant Theory,
\emph{Lect. Notes in Math.} \textbf{996} (1983), 118--144.

\bibitem{Lenart} C. Lenart, Combinatorial aspects of the $K$-theory of Grassmannians, \emph{Ann. Comb.} \textbf{4} (2000), 67--82.



\bibitem{LM} J. B. Lewis and E. Marberg,
Enriched set-valued $P$-partitions and shifted stable Grothendieck polynomials,
\emph{Math. Z.} \textbf{299} (2021), 1929--1972.

\bibitem{LM2} J. B. Lewis and E. Marberg,
Combinatorial formulas for shifted dual stable Grothendieck polynomials,
\emph{Forum Math. Sigma} \textbf{12} (2024), Paper e22.


\bibitem{Macdonald} I. G. Macdonald. Symmetric Functions and Hall Polynomials, 2nd ed. Oxford University Press, New York, 1995.



\bibitem{MarbergHopf} E. Marberg,
Shifted combinatorial Hopf algebras from $K$-theory
\emph{Algebr. Comb.} \textbf{7} (2024), no. 4, 1123--1156.

\bibitem{Mar2025} E. Marberg,
Classical double Grothendieck transitions,
preprint (2025), {\tt arXiv:2512.19045}.

\bibitem{MarScr} E. Marberg and T. Scrimshaw,
Key and Lascoux polynomials for symmetric orbit closures,
preprint (2023), {\tt arXiv:2302.04226}.



\bibitem{NobukawaShimazaki} T. Nobukawa and T. Shimazaki, Special values of $K$-theoretic Schur $P$- and $Q$-functions,
preprint (2024), {\tt arXiv:2410.15739}.

\bibitem{Nazarov} M. Nazarov, 
Factor-representations of the infinite spin-symmetric group, \emph{Zap. Nauchn. Sem. LOMI}, 1990, Vol. 181, 132--145.


\bibitem{PP18} R. Patrias and P. Pylyavskyy. ``Dual filtered graphs''. In: \emph{Algebraic Combinatorics} \textbf{1} (2018), 441--500.

\bibitem{Thoma} E. Thoma, Die unzerlegbaren, positiv-definiten Klassenfunktionen der abzahlbar unendlichen, symmetrischen Gruppe, \emph{Math. Z.} \textbf{85} (1964), 40--61.


\bibitem{Yel19} D. Yeliussizov, Symmetric Grothendieck polynomials, skew Cauchy identities, and dual filtered Young graphs, \emph{J. Combin. Theory Ser. A} \textbf{161} (2019), 453--485.

\bibitem{Yel20} D. Yeliussizov, Positive specializations of symmetric Grothendieck polynomials, \emph{Adv. Math.} \textbf{363} (2020), 107000.


\end{thebibliography}
\end{document}